\documentclass[11pt,a4paper]{article}

\usepackage[T1]{fontenc}
\usepackage[utf8]{inputenc}
\usepackage{lmodern}
\usepackage{amsmath,amssymb,amsfonts,amsthm}
\usepackage{mathtools}
\usepackage{mathrsfs} 
\usepackage{graphicx}
\usepackage{booktabs}
\usepackage{xcolor}
\usepackage{microtype}
\usepackage[margin=1in]{geometry}
\usepackage[hidelinks]{hyperref}
\hypersetup{
  pdftitle={Ambient Chain-Relation Reduction in Contracting Controlled Skew-Product Semiflows},
  pdfauthor={Robert Vrabel},
  pdfkeywords={nonautonomous dynamics, skew-product semiflows, chain recurrence, contraction}
}

\theoremstyle{plain}
\newtheorem{theorem}{Theorem}
\newtheorem{lemma}[theorem]{Lemma}
\newtheorem{proposition}[theorem]{Proposition}
\newtheorem{corollary}[theorem]{Corollary}
\newtheorem{assumption}{Assumption}

\theoremstyle{remark}
\newtheorem{remark}{Remark}
\theoremstyle{definition}

\newtheorem{definition}{Definition}

\newenvironment{pf}{\begin{proof}}{\end{proof}}

\newcommand{\R}{\mathbb{R}}
\newcommand{\N}{\mathbb{N}}
\newcommand{\T}{\mathbb{T}}
\newcommand{\Mcal}{\mathcal{M}}

\newcommand{\Gcal}{\mathcal{G}}

\newcommand{\CR}{\operatorname{CR}}
\newcommand{\dist}{\operatorname{dist}}
\newcommand{\graph}{\operatorname{graph}}

\newcommand{\Lip}{\operatorname{Lip}}

\newcommand{\diam}{\operatorname{diam}}

\newcommand{\e}{\mathrm{e}}

\newcommand{\dd}{\mathrm{d}}  

\newcommand{\lambdaMax}{\lambda_{\max}}

\newcommand{\Core}{\operatorname{Core}}

\title{Ambient Chain-Relation Reduction in Contracting\\ Controlled Skew-Product Semiflows}
\author{Robert Vrabel\\[3pt]
\normalsize Institute of Applied Informatics, Automation, and Mechatronics\\
\normalsize Faculty of Materials Science and Technology\\
\normalsize Slovak University of Technology in Bratislava\\
\normalsize Jana Bottu 25, 917 24 Trnava, Slovakia\\[3pt]
\normalsize \texttt{robert.vrabel@stuba.sk}}
\date{}

\newcommand{\keywords}[1]{%
	\par\medskip
	\noindent\textbf{Keywords:} #1\par
}

\newcommand{\subjclass}[1]{%
	\par\smallskip
	\noindent\textbf{2020 Mathematics Subject Classification:} #1\par
}

\begin{document}
\maketitle 

\begin{abstract}
The qualitative dynamics of nonautonomous differential equations is often studied through cocycles and skew-product semiflows over a driving base. We develop a chain-recurrence reduction principle for systems admitting an equivariant attracting section and uniform asymptotic tracking. The abstract theorem determines the complete chain-attainability relation in the ambient phase space: chains may start off the attracting section, but every attainable target lies on it and is selected solely by the factor semiflow. Applied to contracting controlled skew-product semiflows, this gives exact descriptions of the chain recurrent set, chain components, Conley order, compact strictly invariant sets, attractors, basins, and dual repellers. A weighted-supremum transverse gauge yields a complete Lyapunov function for the full semiflow while preserving the recurrent values of the base. The differential hypotheses are verified through subadditive fiber-derivative growth and averaged moving-metric logarithmic norms, allowing transient expansion and non-unique ergodicity. Quantitative persistence is established for finite-window maps and smooth vector-field perturbations. The results provide a unified framework for chain recurrence, Lyapunov structure, and attractor--repeller geometry in contracting nonautonomous dynamics.
\end{abstract} 

\keywords{nonautonomous dynamics, controlled skew-product semiflows,
	chain recurrence, invariant graphs, complete Lyapunov functions,
	attractor--repeller structure, averaged contraction}

\subjclass{Primary 37B35; Secondary 37B55, 34D20.}
\medskip

\section{Introduction}
\label{sec:introduction}

Skew-product semiflows provide a standard framework for the qualitative study of nonautonomous differential equations. A driving flow describes the evolution of an external signal, forcing, reference generator, or parameter process, while the state equation evolves in the corresponding fiber. Such representations also arise naturally in dynamical approaches to control systems \cite{ColoniusKliemann2000}. This representation separates recurrence carried by the base dynamics from recurrence generated internally by the state dynamics. Classical references on topological dynamics and nonautonomous equations include \cite{KloedenRasmussen2011,Sell1971,SellYou2002,ShenYi1998}; differential-equation-generated skew-product semiflows are treated, for example, in \cite{LongoNovoObaya2022,NovoObayaSanz2007}. The present paper is concerned with the global chain-recurrent structure of contracting skew-product extensions and with the way this structure is inherited from the driving flow.

The central topological mechanism is formulated in an abstract compact-semiflow setting. Let
$\Phi$ be a semiflow on a compact metric space $X$, let $\Psi$ be a semiflow on a compact metric space $Y$, and suppose that there are continuous maps
\begin{equation}
	p:X\to Y,
	\qquad
	i:Y\to X,
	\qquad
	p\circ i=\operatorname{id}_Y,
	\label{eq:abstract-factor-section-intro}
\end{equation}
that semiconjugate the dynamics and make $i$ an equivariant section. If every orbit of $\Phi$ uniformly approaches the orbit on the section with the same factor coordinate, then the ambient chain relation is completely determined by the factor system. Writing
\(
x\rightsquigarrow_\Phi y
\)
to mean that $y$ is chain attainable from $x$ under $\Phi$, the reduction takes the exact form
\begin{equation}
	x\rightsquigarrow_\Phi y
	\quad\Longleftrightarrow\quad
	y=i(\eta)\ \text{for some }\eta\in Y
	\text{ such that }p(x)\rightsquigarrow_\Psi\eta.
	\label{eq:abstract-chain-reduction-intro}
\end{equation}
This source--target identity is intrinsically asymmetric: the source may lie outside the equivariant section, but every chain target lies on it. It implies exact identities for chain-attainable sets, chain recurrence, chain components, and the order induced on those components.

We apply this abstract reduction to feedback-controlled nonautonomous differential equations of the form
\begin{equation}
 \dot x
 =f(\theta_t\omega,x)+B(\theta_t\omega,x)k(\theta_t\omega,x)
 =:F(\theta_t\omega,x)
 \label{eq:controlled-system-intro}
\end{equation}
over a compact base flow $\theta$. If the fiber dynamics contracts over one common finite time window, the graph-transform mechanism yields a unique continuous invariant graph
\begin{equation}
 \Gamma_h
 =\{(\omega,h(\omega)):\omega\in\Omega\}
 \label{eq:invariant-graph-intro}
\end{equation}
and uniform exponential tracking of the corresponding graph trajectory. Taking $p=\pi_\Omega$ and $i=\iota$, where
\[
\pi_\Omega(\omega,x)=\omega,
\qquad
\iota(\omega)=(\omega,h(\omega)),
\]
formula \eqref{eq:abstract-chain-reduction-intro} becomes
\begin{equation}
	z=(\omega,x)\rightsquigarrow_\Pi w
	\quad\Longleftrightarrow\quad
	w=\iota(\eta)\ \text{for some }\eta\in\Omega
	\text{ such that }\omega\rightsquigarrow_\theta\eta.
	\label{eq:ambient-chain-relation-intro}
\end{equation}
For a semiflow $\Phi$, denote the positive chain-attainable set of $x$ and the chain-recurrent set by
\[
\mathcal C_\Phi^+(x)
:=
\{y:x\rightsquigarrow_\Phi y\},
\qquad
\CR(\Phi)
:=
\{x:x\rightsquigarrow_\Phi x\},
\]
respectively. Hence \eqref{eq:ambient-chain-relation-intro} is equivalently written as
\begin{equation}
	\mathcal C_\Pi^+(z)
	=
	\iota\bigl(\mathcal C_\theta^+(\pi_\Omega z)\bigr).
	\label{eq:chain-attainable-intro}
\end{equation}
Consequently,
\begin{equation}
	\CR(\Pi)
	=
	\{(\omega,h(\omega)):\omega\in\CR(\theta)\},
	\label{eq:main-cr-identity-intro}
\end{equation}
and the chain components, together with their induced order, are transported without merging or splitting.

The invariant-set structure displays a related but distinct asymmetry. Compact strictly invariant subsets of the skew product lie on $\Gamma_h$, and attractors are graph lifts
\begin{equation}
 A\longmapsto\widehat A:=\iota(A).
 \label{eq:attractor-bijection-intro}
\end{equation}
Their basins satisfy the exact identity
\begin{equation}
 \mathcal B_\Pi(\widehat A)
 =\pi_\Omega^{-1}\bigl(\mathcal B_\theta(A)\bigr).
 \label{eq:basin-intro}
\end{equation}
Standard dual repellers do not collapse to the graph. If \(A_\theta^*\) is the dual repeller of \(A\), then
\begin{equation}
	(\widehat A)_\Pi^*
	=\pi_\Omega^{-1}(A_\theta^*),
	\qquad
	\Core_\Pi^{\mathrm{str}}\bigl((\widehat A)_\Pi^*\bigr)
	=\iota(A_\theta^*).
	\label{eq:dual-repeller-asymmetry-intro}
\end{equation}
Here
\(\Core_\Pi^{\mathrm{str}}\bigl((\widehat A)_\Pi^*\bigr)\)
denotes the largest compact strictly invariant subset of the dual repeller. Thus attractors and chain targets lie on the graph, whereas standard dual repellers retain full fibers and only their strictly invariant cores collapse to graph lifts.

The finite-window hypothesis is obtained from an asymptotic differential criterion. Let $\kappa_t(\omega)$ be the logarithm of the maximal fiber derivative norm accumulated during time $t$. The family $\{\kappa_t\}_{t\geq0}$ is subadditive. Negativity of its maximal exponent over all invariant probability measures of the base is converted into a common negative time window by the continuous-time semi-uniform theorem of Schreiber \cite{Schreiber1998}; related principles and forced-system applications were developed in \cite{SturmanStark2000}. This asymptotic-growth viewpoint is also compatible with classical dichotomy and spectral approaches to nonautonomous linear differential equations \cite{Coppel1978,SackerSell1978}. For differential equations, let \(\Mcal_\theta(\Omega)\) denote the set of all
\(\theta\)-invariant Borel probability measures on \(\Omega\). A moving quadratic metric then gives the verifiable sufficient condition
\begin{equation}
	\sup_{\mu\in\Mcal_\theta(\Omega)}
	\int_\Omega \nu_P(\omega)\,\dd\mu(\omega)<0,
	\label{eq:average-contraction-intro}
\end{equation}
where \(\nu_P\) is the associated moving-metric logarithmic-norm bound. This condition permits a positive instantaneous logarithmic norm on transient portions of base trajectories and does not require unique ergodicity.

\medskip\noindent\textbf{Main contributions and novelty.}
Uniform fiber contraction is known to generate attracting invariant graphs in forced systems
\cite{Stark1997,Stark1999}, while semi-uniform subadditive theorems convert negative
growth rates over invariant measures into common finite-time contraction estimates
\cite{Schreiber1998,SturmanStark2000}. Extensions with shrinking fibers preserve a broad
range of measure-theoretic and statistical properties, including invariant measures and
entropy \cite{Kloeckner2021}; recent work also addresses linear response for
contracting-fiber skew products \cite{AlvesBahsoun2026}. The Morse structure of skew-product semiflows and Lyapunov
functions associated with lifted finite Morse decompositions were developed in
\cite{BortolanEtAl2013}, whereas attractor lattices, dual repellers, and their order
representations provide the global topological framework used below
\cite{KaliesMischaikowVandervorst2022,KaliesVandervorst2025}.

The present paper connects these analytical and topological ingredients through an exact
ambient source--target classification. The essential point is that the initial point of a
chain may lie anywhere in the ambient phase space, including outside the attracting
section, whereas every chain-attainable target is forced onto that section. This reduction
identifies not only the recurrent dynamics carried by the invariant graph, but also the
directional chain structure generated by arbitrary transient fiber states. In the controlled
skew-product setting, it leads to a complete description of the recurrent core, its component
order, the compact strictly invariant dynamics, the attractor structure, and the complementary
full-fiber geometry of dual repellers. The main contributions are concentrated in the following results.

\begin{enumerate}
	
	\item \textit{Exact ambient chain reduction.}
	Theorem~\ref{thm:abstract-chain-reduction} establishes the source--target identity
	\[
	x\rightsquigarrow_\Phi y
	\quad\Longleftrightarrow\quad
	y=i(\eta)
	\ \text{for some }\eta\in Y
	\text{ such that }
	p(x)\rightsquigarrow_\Psi\eta .
	\]
	The result applies to arbitrary ambient source and target points. Its distinguishing
	content is the treatment of chains initiated away from the attracting section: uniform
	tracking removes the transverse position of the source from the asymptotic chain geometry,
	while the factor dynamics completely determines the attainable target. Thus the theorem
	goes strictly beyond conjugacy on the section and gives an explicit reduction of the
	ambient chain-attainability relation. Corollary~\ref{cor:abstract-recurrent-reduction}
	then identifies the chain-recurrent core and proves that the induced order of its chain
	components is preserved exactly.
	
	\item \textit{Complete chain classification of the controlled skew product.}
	Corollary~\ref{cor:controlled-chain-reduction} applies the abstract reduction theorem to
	the invariant graph of the controlled cocycle. Theorem~\ref{thm:chain-recurrence-identity}
	gives the exact identity
	\[
	\CR(\Pi)=\iota\bigl(\CR(\theta)\bigr),
	\]
	while Theorem~\ref{thm:chain-component-correspondence} proves that chain components and
	their directed order are transported without merging, splitting, or creating additional
	recurrent classes. Consequently, every fiber point has precisely the same collection of
	chain-attainable targets as its base point, lifted to the invariant graph. The result
	therefore classifies the ambient chain geometry, not merely the recurrent dynamics already
	confined to the graph.
	
	\item \textit{Constructive complete Lyapunov lift.}
	Theorem~\ref{thm:complete-lyapunov-lift} introduces the weighted-supremum transverse gauge
	\[
	W_\alpha(z)
	=
	\sup_{t\geq0}\e^{\alpha t}V(\Pi_tz),
	\]
	which converts asymptotic fiber tracking into a globally monotone quantity with the
	exponential semigroup estimate
	\[
	W_\alpha(\Pi_tz)
	\leq
	\e^{-\alpha t}W_\alpha(z).
	\]
	The function vanishes precisely on the invariant graph and decreases strictly along every
	off-graph trajectory. Combining it with a complete Lyapunov function of the base yields
	\[
	L_\Pi
	=
	L_\theta\circ\pi_\Omega+\gamma W_\alpha,
	\qquad \gamma>0.
	\]
	This construction acts at the level of the full chain-recurrent decomposition, preserves
	all recurrent values exactly, and provides strict transverse decay even when the original
	graph distance exhibits transient growth. It complements the general existence theory of
	complete Lyapunov functions \cite{Patrao2011} and the lifting of Lyapunov functions for
	finite Morse decompositions in \cite[Theorem~5.7]{BortolanEtAl2013} by giving an explicit
	chain-level formula adapted to the contracting skew-product geometry.
	
	\item \textit{A sharp attractor--repeller duality.}
	Theorem~\ref{thm:compact-invariant-correspondence} classifies all compact strictly
	invariant sets, and Theorem~\ref{thm:attractor-correspondence} gives exact correspondences
	of attractors and their basins. Theorem~\ref{thm:attractor-repeller-correspondence}
	reveals a particularly striking dual picture:
	\[
	(\iota(A))_\Pi^*
	=
	\pi_\Omega^{-1}(A_\theta^*),
	\qquad
	\Core_\Pi^{\mathrm{str}}
	\bigl((\iota(A))_\Pi^*\bigr)
	=
	\iota(A_\theta^*).
	\]
	Thus the attracting and repelling sides of the dynamics respond differently to fiber
	contraction. Chain targets, compact strictly invariant sets, and attractors collapse onto
	the invariant graph, whereas a standard dual repeller retains the entire fibers over the
	corresponding base repeller. Only its largest compact strictly invariant core collapses
	back to the graph. This sharp asymmetry is the attractor--repeller counterpart of the
	source--target asymmetry in the ambient chain-reduction theorem. The associated
	attractor--repeller pairs and the attractor lattice are consequently transported by explicit
	order isomorphisms.
	
	\item \textit{Differential realization and quantitative robustness.}
	Theorem~\ref{thm:subadditive-main} converts negativity of the maximal invariant-measure
	exponent of the fiber derivative cocycle into a common finite-time contraction window.
	This provides the analytical bridge from averaged infinitesimal information to the uniform
	tracking hypothesis required by the abstract topological theory.
	Theorem~\ref{thm:averaged-contraction-main} supplies a verifiable moving-metric
	logarithmic-norm certificate in a base-dependent quadratic metric, allowing both transient
	fiber expansion and non-unique ergodicity of the driving flow. Finally,
	Theorem~\ref{thm:graph-robustness}, Proposition~\ref{prop:C1-to-window-bounds}, and
	Corollary~\ref{cor:CR-robustness} establish quantitative persistence of the invariant graph,
	the contraction window, and the chain-recurrent set under finite-window and vector-field
	perturbations.
	
\end{enumerate}

The paper is organized as follows. Section~\ref{sec:setting} introduces the controlled cocycle, the moving fiber metric, and the chain relation. Section~\ref{sec:abstract-reduction} proves the abstract chain-reduction theorem. Section~\ref{sec:graph} constructs the invariant graph and establishes exponential tracking. Section~\ref{sec:recurrent} applies the abstract result, constructs complete Lyapunov functions, and derives the invariant-set and attractor--repeller correspondences. Sections~\ref{sec:subadditive-contraction} and \ref{sec:averaged-contraction} provide the subadditive and moving-metric contraction criteria. Quantitative robustness is treated in Section~\ref{sec:robustness}, and representative examples are given in Section~\ref{sec:examples}.

\section{Controlled skew-product semiflows and chain relations}
\label{sec:setting}

This section introduces the controlled skew-product framework and fixes the
dynamical and metric notation used throughout the paper. We first specify the
base flow and the feedback-controlled cocycle, and then introduce the moving
fiber metric and the chain-attainability relation.

\subsection{Base flow and controlled cocycle}
\label{subsec:base-cocycle}

Let \((\Omega,d_\Omega)\) be a compact metric space and let
\begin{equation}
 \theta:\R\times\Omega\to\Omega,
 \qquad (t,\omega)\mapsto\theta_t\omega,
 \label{eq:base-flow}
\end{equation}
be a continuous flow. The base is interpreted as an exosystem. It may be a periodic or quasiperiodic phase flow, a compact hull of a translation-compact forcing, or a more general compact dynamical system.

Let \(K\subset\R^n\) be nonempty, compact, and convex. The
feedback-controlled equation introduced in
\eqref{eq:controlled-system-intro} is considered on \(K\) in the form
\begin{equation}
	\dot x=F(\theta_t\omega,x),
	\qquad
	x(0)=x_0\in K,
	\label{eq:closed-loop-ode}
\end{equation}
where
\begin{equation}
	F(\omega,x)
	=
	f(\omega,x)+B(\omega,x)k(\omega,x).
	\label{eq:closed-loop-vector-field}
\end{equation}
The map \(F\) is assumed to extend continuously to \(\Omega\times U\), where \(U\subset\R^n\) is an open neighborhood of \(K\). Its fiber derivative \(D_xF\) exists on \(\Omega\times U\), is continuous there, and is locally bounded uniformly with respect to the base variable. These hypotheses imply local existence, uniqueness, continuous dependence, and differentiability of the fiber solution map with respect to its initial state. We impose the following standing global hypothesis.

\begin{assumption}
\label{ass:well-posedness}
For every \((\omega,x)\in\Omega\times K\), equation \eqref{eq:closed-loop-ode} has a unique solution defined for all \(t\geq0\), and \(K\) is positively invariant:
\begin{equation}
 \varphi(t,\omega,x)\in K,
 \qquad t\geq0.
 \label{eq:positive-invariance}
\end{equation}
The solution map \(\varphi:\R_+\times\Omega\times K\to K\) is continuous.
\end{assumption}

The solution map satisfies the cocycle identities
\begin{equation}
 \varphi(0,\omega,x)=x,
 \qquad
 \varphi(t+s,\omega,x)
 =\varphi\bigl(t,\theta_s\omega,\varphi(s,\omega,x)\bigr).
 \label{eq:cocycle-identity}
\end{equation}
The associated skew-product semiflow is
\begin{equation}
 \Pi_t(\omega,x)
 =\bigl(\theta_t\omega,\varphi(t,\omega,x)\bigr),
 \qquad t\geq0.
 \label{eq:skew-product-semiflow}
\end{equation}
It acts on the compact phase space
\begin{equation}
 Z:=\Omega\times K.
 \label{eq:phase-space-Z}
\end{equation}

\subsection{A continuous family of fiber metrics}
\label{subsec:fiber-metrics}

Let \(P:\Omega\to\R^{n\times n}\) be continuous, symmetric, and uniformly positive definite. Thus, there exist constants \(0<m_P\leq M_P<\infty\) such that
\begin{equation}
 m_PI\preceq P(\omega)\preceq M_PI,
 \qquad \omega\in\Omega.
 \label{eq:P-uniform-bounds}
\end{equation}
Define
\begin{equation}
 \lvert v\rvert_\omega
 :=\sqrt{v^\top P(\omega)v},
 \qquad v\in\R^n.
 \label{eq:moving-norm}
\end{equation}
The family \(\lvert\cdot\rvert_\omega\) is uniformly equivalent to the Euclidean norm. For the quantitative estimates below, we fix the product metric on \(Z\) by
\begin{equation}
 D\bigl((\omega,x),(\eta,y)\bigr)
 :=d_\Omega(\omega,\eta)+\lVert x-y\rVert.
 \label{eq:product-metric}
\end{equation}
For points with the same base coordinate,
\[
 D\bigl((\omega,x),(\omega,y)\bigr)
 =\lVert x-y\rVert
 \leq m_P^{-1/2}\lvert x-y\rvert_\omega.
\]
On the compact space \(Z\), the chain relation and chain recurrence are independent of the particular compatible metric; the Hausdorff estimates in this paper refer to the fixed metric \(D\) in \eqref{eq:product-metric}.

\begin{definition}
\label{def:block-contraction}
The cocycle \(\varphi\) is said to be uniformly contracting over the window \(\tau>0\) if there exists \(q\in(0,1)\) such that
\begin{equation}
 \left\lvert
 \varphi(\tau,\omega,x)-\varphi(\tau,\omega,y)
 \right\rvert_{\theta_\tau\omega}
 \leq q\lvert x-y\rvert_\omega
 \label{eq:block-contraction}
\end{equation}
for all \(\omega\in\Omega\) and \(x,y\in K\).
\end{definition}

Finite-window contraction is weaker than an instantaneous differential contraction condition. The dynamics may expand during subintervals of \([0,\tau]\), provided that the net effect over the complete window is contractive.

By compactness and continuous differentiability, there exists \(L_\tau\geq1\) such that
\begin{equation}
 \left\lvert
 \varphi(t,\omega,x)-\varphi(t,\omega,y)
 \right\rvert_{\theta_t\omega}
 \leq L_\tau\lvert x-y\rvert_\omega,
 \qquad 0\leq t\leq\tau.
 \label{eq:intermediate-lipschitz}
\end{equation}
This estimate will be used to pass from contraction at integer multiples of \(\tau\) to continuous-time exponential tracking.

\subsection{Chains, chain relations, and semiconjugacies}
\label{subsec:chains}

We use the Conley chain notion for continuous semiflows on compact metric spaces; see \cite{Conley1978,Hurley1995}. The lower bound on the flight times is essential for excluding artificial recurrence generated by arbitrarily short pieces of orbit.

\begin{definition}
\label{def:chain}
Let $\Phi$ be a continuous semiflow on a compact metric space $(X,d_X)$. For $\varepsilon>0$ and $T>0$, an $(\varepsilon,T)$-chain from $x_0\in X$ to $x_m\in X$ is a finite sequence with $m\geq1$
\begin{equation}
 x_0,x_1,\ldots,x_m\in X,
 \qquad
 t_1,\ldots,t_m\geq T,
 \label{eq:chain-sequence}
\end{equation}
such that
\begin{equation}
 d_X\bigl(\Phi_{t_i}x_{i-1},x_i\bigr)<\varepsilon,
 \qquad i=1,\ldots,m.
 \label{eq:chain-jumps}
\end{equation}
We write
\begin{equation}
 x\rightsquigarrow_\Phi y
 \label{eq:chain-relation-notation}
\end{equation}
if an $(\varepsilon,T)$-chain from $x$ to $y$ exists for every $\varepsilon,T>0$. It is transitive but, outside the chain recurrent set, need not be reflexive. A point $x$ is chain recurrent if $x\rightsquigarrow_\Phi x$, and
\begin{equation}
 \CR(\Phi):=\{x\in X:x\rightsquigarrow_\Phi x\}.
 \label{eq:chain-recurrent-set-definition}
\end{equation}
Mutual chain attainability defines an equivalence relation on $\CR(\Phi)$; its equivalence classes are the chain components. We use this notation for $\Pi$, $\theta$, and the abstract semiflows introduced below.
\end{definition}

The projection
\begin{equation}
 \pi_\Omega:Z\to\Omega,
 \qquad \pi_\Omega(\omega,x)=\omega,
 \label{eq:base-projection}
\end{equation}
semiconjugates \(\Pi\) and \(\theta\):
\begin{equation}
 \pi_\Omega\circ\Pi_t
 =\theta_t\circ\pi_\Omega,
 \qquad t\geq0.
 \label{eq:projection-semiconjugacy}
\end{equation}
The following elementary observation will be used repeatedly.

\begin{lemma}
\label{lem:chains-semiconjugacy}
Let \(\Phi\) and \(\Psi\) be continuous semiflows on compact metric spaces \(X\) and \(Y\), respectively, and let \(p:X\to Y\) be continuous with
\begin{equation}
 p\circ\Phi_t=\Psi_t\circ p,
 \qquad t\geq0.
 \label{eq:abstract-semiconjugacy}
\end{equation}
Then
\begin{equation}
 x\rightsquigarrow_\Phi x'
 \quad\Longrightarrow\quad
 p(x)\rightsquigarrow_\Psi p(x').
 \label{eq:chain-relation-semiconjugacy}
\end{equation}
If \(p\) is a homeomorphism from a compact invariant set \(X_0\subset X\) onto a compact invariant set \(Y_0\subset Y\), and conjugates the restricted semiflows, then the chain relations of the two restrictions are equivalent under \(p\).
\end{lemma}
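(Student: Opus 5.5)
The plan is to push chains forward through \(p\) using uniform continuity on the compact space \(X\). Fix \(\varepsilon>0\) and \(T>0\). Since \(p\) is continuous on the compact space \(X\), it is uniformly continuous. Hence there is \(\delta>0\) such that
\[
d_X(a,b)<\delta \quad\Longrightarrow\quad d_Y\bigl(p(a),p(b)\bigr)<\varepsilon .
\]
Given \(x\rightsquigarrow_\Phi x'\), take a \((\delta,T)\)-chain \(x=x_0,\dots,x_m=x'\) with flight times \(t_1,\dots,t_m\geq T\). I then consider the image sequence \(p(x_0),\dots,p(x_m)\) with the same times \(t_i\). By the semiconjugacy \eqref{eq:abstract-semiconjugacy},
\[
d_Y\bigl(\Psi_{t_i}p(x_{i-1}),p(x_i)\bigr)=d_Y\bigl(p(\Phi_{t_i}x_{i-1}),p(x_i)\bigr)<\varepsilon .
\]
So the image is an \((\varepsilon,T)\)-chain from \(p(x)\) to \(p(x')\). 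The lower bound on the flight times is preserved because the times are unchanged. Since \(\varepsilon\) and \(T\) were arbitrary, \eqref{eq:chain-relation-semiconjugacy} follows.

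For the second assertion, the chain relations of the restrictions are understood intrinsically. That is, chains in \(X_0\) use points of \(X_0\) and the metric \(d_X\) restricted to \(X_0\), and similarly for \(Y_0\). I apply the first part twice. First, \(p|_{X_0}:X_0\to Y_0\) is continuous and semiconjugates \(\Phi|_{X_0}\) to \(\Psi|_{Y_0}\). Second, its inverse \(p^{-1}:Y_0\to X_0\) is continuous because it is the inverse of a continuous bijection from a compact space to a Hausdorff space. It also semiconjugates in the reverse direction, since \(p^{-1}\circ\Psi_t=\Phi_t\circ p^{-1}\) on \(Y_0\); this comes from applying \(p^{-1}\) to both sides of \(p\circ\Phi_t=\Psi_t\circ p\). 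Both \(X_0\) and \(Y_0\) are compact metric spaces carrying continuous semiflows, because they are invariant. So the first part gives both implications,
\[
x\rightsquigarrow_{\Phi|_{X_0}}x' \quad\Longleftrightarrow\quad p(x)\rightsquigarrow_{\Psi|_{Y_0}}p(x').
\]

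There is no real obstacle here. The only points that need care are uniform continuity (which requires compactness), the preservation of flight times, and stating clearly that the second claim concerns the intrinsic chain relations of the restricted systems. It does not concern chains in the ambient space that happen to start and end in \(X_0\).
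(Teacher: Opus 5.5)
Your proposal is correct and follows essentially the same route as the paper: push a \((\delta,T)\)-chain forward through the uniformly continuous map \(p\) with the flight times unchanged, then obtain the conjugacy statement by applying the same argument to \(p\) and \(p^{-1}\) on the invariant sets. Your explicit remark that the second claim concerns the intrinsic chain relations of the restricted semiflows is a useful clarification that the paper leaves implicit.
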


\begin{pf}
Fix \(\varepsilon,T>0\). Uniform continuity of \(p\) gives \(\delta>0\) such that
\begin{equation}
 d_X(u,v)<\delta
 \quad\Longrightarrow\quad
 d_Y(p(u),p(v))<\varepsilon.
 \label{eq:semiconjugacy-modulus}
\end{equation}
Applying \(p\) to a \((\delta,T)\)-chain and using \eqref{eq:abstract-semiconjugacy} produces an \((\varepsilon,T)\)-chain in \(Y\). The conjugacy statement follows by applying the same argument to \(p\) and to \(p^{-1}\).
\end{pf}

In particular, \eqref{eq:projection-semiconjugacy} implies that every skew-product chain projects to a base chain after reducing the jump tolerance according to the modulus of continuity of \(\pi_\Omega\).

\section{Abstract chain reduction under an attracting equivariant section}
\label{sec:abstract-reduction}

We formulate the chain-reduction principle in an abstract compact-semiflow
setting. The relevant structure consists of a factor map, an equivariant
section, and uniform asymptotic tracking of that section by all ambient
trajectories. These hypotheses are sufficient to determine the full ambient
chain relation from the chain relation of the factor semiflow.

\begin{assumption}
\label{ass:abstract-tracking}
Let $\Phi$ and $\Psi$ be continuous semiflows on compact metric spaces $(X,d_X)$ and $(Y,d_Y)$, respectively. Assume that there are continuous maps
\begin{equation}
 p:X\to Y,
 \qquad
 i:Y\to X,
 \label{eq:abstract-factor-section}
\end{equation}
such that
\begin{equation}
 p\circ i=\operatorname{id}_Y,\qquad
 p\circ\Phi_t=\Psi_t\circ p,\qquad
 \Phi_t\circ i=i\circ\Psi_t,\qquad t\geq0.
 \label{eq:abstract-factor-equivariant-identities}
\end{equation}
Thus $p$ is a factor map and $i$ is an equivariant section of $p$. In addition, assume uniform asymptotic tracking of this section:
\begin{equation}
 r(T):=
 \sup_{t\geq T}\sup_{x\in X}
 d_X\bigl(\Phi_t x,i(\Psi_t p(x))\bigr)
 \longrightarrow0
 \qquad (T\to\infty).
 \label{eq:abstract-uniform-tracking}
\end{equation}
\end{assumption}

Because $p\circ i=\operatorname{id}_Y$, the map $i$ is injective. Since $Y$ is compact and $X$ is Hausdorff, $i$ is a homeomorphism from $Y$ onto the compact set $i(Y)$. Equivariance gives $\Phi_t(i(Y))=i(\Psi_t(Y))\subseteq i(Y)$, so $i(Y)$ is positively invariant. It is strictly invariant only when the factor semiflow is surjective.

\begin{theorem}
\label{thm:abstract-chain-reduction}
Under Assumption~\ref{ass:abstract-tracking}, for all $x,y\in X$,
\begin{equation}
 x\rightsquigarrow_\Phi y
 \quad\Longleftrightarrow\quad
 y=i(\eta)\ \text{for some }\eta\in Y
 \text{ with }p(x)\rightsquigarrow_\Psi\eta.
 \label{eq:abstract-chain-reduction}
\end{equation}
Equivalently,
\begin{equation}
 \mathcal C_\Phi^+(x)
 =i\bigl(\mathcal C_\Psi^+(p(x))\bigr),
 \qquad
 \mathcal C_\Phi^+(x):=\{y:x\rightsquigarrow_\Phi y\}.
 \label{eq:abstract-chain-attainable-identity}
\end{equation}
\end{theorem}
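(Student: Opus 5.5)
The plan is to prove the two implications separately. Both rest on Lemma~\ref{lem:chains-semiconjugacy} and on the tracking function $r(T)$ in \eqref{eq:abstract-uniform-tracking}. The set identity \eqref{eq:abstract-chain-attainable-identity} is just a restatement of \eqref{eq:abstract-chain-reduction}.

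\emph{Forward direction.} Assume $x\rightsquigarrow_\Phi y$. Lemma~\ref{lem:chains-semiconjugacy} applied to $p$ gives $p(x)\rightsquigarrow_\Psi p(y)$. Set $\eta:=p(y)$. It remains to show $y=i(\eta)$, which is the step where tracking enters. Fix $\varepsilon>0$ and choose $T$ with $r(T)<\varepsilon$. Take an $(\varepsilon,T)$-chain from $x$ to $y$ and look at its last jump: $d_X(\Phi_{t_m}x_{m-1},y)<\varepsilon$ with $t_m\geq T$. Tracking gives $d_X(\Phi_{t_m}x_{m-1},i(\Psi_{t_m}p(x_{m-1})))<\varepsilon$. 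Hence $y$ lies within $2\varepsilon$ of the point $i(\Psi_{t_m}p(x_{m-1}))\in i(Y)$. Since $i(Y)$ is compact and $\varepsilon$ is arbitrary, $y\in i(Y)$, say $y=i(\eta')$. Applying $p$ shows $\eta'=p(y)=\eta$.

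\emph{Reverse direction.} Assume $y=i(\eta)$ with $p(x)\rightsquigarrow_\Psi\eta$, and fix $\varepsilon,T>0$. By uniform continuity of $i$ on the compact space $Y$, pick $\delta>0$ such that $d_Y(a,b)<\delta$ implies $d_X(i(a),i(b))<\varepsilon/2$. Then choose $T'\geq T$ with $r(T')<\varepsilon/2$, and take a $(\delta,T')$-chain $\eta_0=p(x),\eta_1,\dots,\eta_m=\eta$ in $Y$ with times $t_k\geq T'$.

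The candidate ambient chain is $x_0=x$, $x_k=i(\eta_k)$ for $k\geq1$, using the same times.
\begin{itemize}
\item \emph{First jump.} Tracking gives $d_X(\Phi_{t_1}x,i(\Psi_{t_1}p(x)))<\varepsilon/2$. The choice of $\delta$ gives $d_X(i(\Psi_{t_1}\eta_0),i(\eta_1))<\varepsilon/2$. Together these bound the first jump by $\varepsilon$.
\item \emph{Later jumps.} For $k\geq2$, equivariance gives $\Phi_{t_k}i(\eta_{k-1})=i(\Psi_{t_k}\eta_{k-1})$. Its distance to $i(\eta_k)$ is then below $\varepsilon/2$ by the choice of $\delta$.
\end{itemize}
So we have an $(\varepsilon,T)$-chain from $x$ to $y$, and hence $x\rightsquigarrow_\Phi y$.

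The only delicate point is the first step from the off-section source $x$. The factor chain must be chosen with flight times at least $T'$, not merely $T$, so that the tracking error of the first flight is absorbed into the jump tolerance. I expect this step to be the main obstacle; all other steps are routine uniform-continuity estimates.
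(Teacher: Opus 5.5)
Your proposal is correct and follows the paper's proof essentially step for step. The forward direction projects chains through $p$ and uses the last jump together with tracking to force the target onto $i(Y)$. The reverse direction lifts a $(\delta,T_1)$-chain with $r(T_1)<\varepsilon/2$, keeping the off-section source only in the first leg. The only cosmetic difference is that you tie the tracking time to $\varepsilon$ in the forward step instead of letting $\varepsilon\downarrow0$ and then $T\to\infty$.
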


\begin{pf}
We prove the two implications separately.

Assume first that $x\rightsquigarrow_\Phi y$. The factor relation implies
\begin{equation}
 p(x)\rightsquigarrow_\Psi p(y).
 \label{eq:abstract-reduction-projection}
\end{equation}
Indeed, given $\varepsilon_Y,T>0$, uniform continuity of $p$ provides $\delta_X>0$ such that $d_X(u,v)<\delta_X$ implies $d_Y(p(u),p(v))<\varepsilon_Y$. Projecting a $(\delta_X,T)$-chain from $x$ to $y$ through $p$ and using $p\circ\Phi_t=\Psi_t\circ p$ yields an $(\varepsilon_Y,T)$-chain from $p(x)$ to $p(y)$.

It remains to identify the target. Fix $T>0$ and $\varepsilon>0$. Since $x\rightsquigarrow_\Phi y$, there is an $(\varepsilon,T)$-chain
\begin{equation}
 x=x_0,x_1,\ldots,x_m=y,\qquad t_1,\ldots,t_m\geq T,
 \label{eq:abstract-reduction-chain}
\end{equation}
with $m\geq1$. The final jump and the tracking hypothesis give
\begin{equation}
 \dist\bigl(y,i(Y)\bigr)
 \leq d_X\bigl(y,\Phi_{t_m}x_{m-1}\bigr)
      +\dist\bigl(\Phi_{t_m}x_{m-1},i(Y)\bigr)
 <\varepsilon+r(T).
 \label{eq:abstract-target-distance}
\end{equation}
For fixed $T$, letting $\varepsilon\downarrow0$ yields $\dist(y,i(Y))\leq r(T)$. The estimate holds for every $T>0$, and therefore $r(T)\to0$ implies $\dist(y,i(Y))=0$. The set $i(Y)$ is compact and hence closed, so $y=i(\eta)$ for some $\eta\in Y$. This point is unique because $i$ is injective. Applying $p$ and using $p\circ i=\operatorname{id}_Y$ gives $\eta=p(y)$. Notice that the estimate \eqref{eq:abstract-target-distance} uses only the final leg of the chain; no control of the number of intermediate legs is required. Combining the identity $\eta=p(y)$ with \eqref{eq:abstract-reduction-projection} proves the necessity.

Conversely, suppose that $p(x)\rightsquigarrow_\Psi\eta$ and set $y=i(\eta)$. Let $\varepsilon,T>0$ be arbitrary. Uniform continuity of $i$ gives $\delta>0$ such that
\begin{equation}
 d_Y(u,v)<\delta
 \quad\Longrightarrow\quad
 d_X(i(u),i(v))<\frac{\varepsilon}{2}.
 \label{eq:abstract-iota-modulus}
\end{equation}
Choose $T_1\geq T$ with $r(T_1)<\varepsilon/2$. The relation $p(x)\rightsquigarrow_\Psi\eta$ provides a $(\delta,T_1)$-chain
\begin{equation}
 p(x)=y_0,y_1,\ldots,y_m=\eta,\qquad t_1,\ldots,t_m\geq T_1.
 \label{eq:abstract-base-chain}
\end{equation}
Define the lifted sequence by
\begin{equation}
 x_0=x,\qquad x_j=i(y_j),\quad j=1,\ldots,m.
 \label{eq:abstract-lifted-chain}
\end{equation}
The first leg begins at the prescribed ambient point $x$. By the triangle inequality, the tracking estimate, and \eqref{eq:abstract-iota-modulus},
\begin{equation}
 d_X(\Phi_{t_1}x_0,x_1)
 \leq d_X\bigl(\Phi_{t_1}x,i(\Psi_{t_1}p(x))\bigr)
      +d_X\bigl(i(\Psi_{t_1}p(x)),i(y_1)\bigr)
 <\varepsilon.
 \label{eq:abstract-first-lifted-jump}
\end{equation}
For every $j=2,\ldots,m$, equivariance of the section gives
\begin{equation}
 d_X(\Phi_{t_j}x_{j-1},x_j)
 =d_X\bigl(i(\Psi_{t_j}y_{j-1}),i(y_j)\bigr)
 <\frac{\varepsilon}{2}<\varepsilon.
 \label{eq:abstract-subsequent-lifted-jumps}
\end{equation}
All flight times are at least $T_1\geq T$, and the terminal point is $x_m=i(\eta)=y$. Hence the lifted sequence is an $(\varepsilon,T)$-chain from $x$ to $y$. Since $\varepsilon$ and $T$ were arbitrary, $x\rightsquigarrow_\Phi y$. Taking the set of all admissible targets $y$ gives \eqref{eq:abstract-chain-attainable-identity} directly from \eqref{eq:abstract-chain-reduction}.
\end{pf}

\begin{corollary}
\label{cor:abstract-recurrent-reduction}
Under Assumption~\ref{ass:abstract-tracking},
\begin{equation}
 \CR(\Phi)=i\bigl(\CR(\Psi)\bigr).
 \label{eq:abstract-CR-identity}
\end{equation}
Moreover, $i$ induces a bijection between the chain components of $\Psi$ and those of $\Phi$, and this bijection preserves and reflects the chain order between components.
\end{corollary}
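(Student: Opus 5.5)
The whole corollary will follow from Theorem~\ref{thm:abstract-chain-reduction} applied with suitable source and target points, together with the identity $p\circ i=\operatorname{id}_Y$.

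\emph{Step 1: the identity $\CR(\Phi)=i(\CR(\Psi))$.}
For ``$\subseteq$'', take $x\in\CR(\Phi)$, so $x\rightsquigarrow_\Phi x$. By \eqref{eq:abstract-chain-reduction} with $y=x$, we get $x=i(\eta)$ for some $\eta$ with $p(x)\rightsquigarrow_\Psi\eta$. Applying $p$ gives $\eta=p(i(\eta))=p(x)$, hence $\eta\rightsquigarrow_\Psi\eta$, that is, $\eta\in\CR(\Psi)$ and $x=i(\eta)$.
For ``$\supseteq$'', take $\eta\in\CR(\Psi)$ and set $x=i(\eta)$. Then $p(x)=\eta\rightsquigarrow_\Psi\eta$. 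The converse direction of the theorem, with target $y=i(\eta)$, yields $x\rightsquigarrow_\Phi x$.

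\emph{Step 2: the key equivalence.}
I will record that, for all $\eta,\zeta\in Y$,
\[
i(\eta)\rightsquigarrow_\Phi i(\zeta)\iff \eta\rightsquigarrow_\Psi\zeta .
\]
This is the theorem applied with $x=i(\eta)$ and $y=i(\zeta)$: we have $p(x)=\eta$, and the witness in \eqref{eq:abstract-chain-reduction} must equal $\zeta$ because $i$ is injective. No separate conjugacy argument through Lemma~\ref{lem:chains-semiconjugacy} is needed.

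\emph{Step 3: chain components and their order.}
Restricted to $\CR(\Psi)$, the key equivalence shows that $i$ maps mutual chain attainability onto mutual chain attainability on $\CR(\Phi)=i(\CR(\Psi))$.
\begin{itemize}
\item If $E$ is a chain component of $\Psi$, then $i(E)$ lies in a single component of $\Phi$.
\item Conversely, every component of $\Phi$ lies in $i(\CR(\Psi))$, and its preimage under the injective map $i$ is a single class.
\item Hence $E\mapsto i(E)$ is a bijection between components.
\end{itemize}
The chain order between components $E\preceq E'$ is defined by $\eta\rightsquigarrow\zeta$ for some (equivalently, by transitivity within components, every) $\eta\in E$, $\zeta\in E'$. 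By the key equivalence, this order is both preserved and reflected by $i$.

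\emph{Main obstacle.}
I expect essentially none. The only care needed is the uniqueness of the witness $\eta$, which comes from $p\circ i=\operatorname{id}_Y$. One should also confirm that the order on components is well defined independently of representatives, which follows from transitivity of $\rightsquigarrow$.
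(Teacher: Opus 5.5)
Your proposal is correct and follows the paper's proof essentially step for step. Both obtain the identity $\CR(\Phi)=i(\CR(\Psi))$ by applying Theorem~\ref{thm:abstract-chain-reduction} with $y=x$ and with $x=y=i(\eta)$. Both then derive the equivalence $i(\xi)\rightsquigarrow_\Phi i(\eta)\Leftrightarrow\xi\rightsquigarrow_\Psi\eta$ via injectivity of $i$, and read off from it the component bijection and the preservation and reflection of the order.
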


\begin{pf}
Apply Theorem~\ref{thm:abstract-chain-reduction} with $y=x$. If $x\rightsquigarrow_\Phi x$, then the theorem forces $x=i(\eta)$ for some $\eta\in Y$ and gives $\eta\rightsquigarrow_\Psi\eta$. Thus $x\in i(\CR(\Psi))$. Conversely, if $\eta\in\CR(\Psi)$, then Theorem~\ref{thm:abstract-chain-reduction} applied to $x=i(\eta)$ and $y=i(\eta)$ gives $i(\eta)\rightsquigarrow_\Phi i(\eta)$. Hence \eqref{eq:abstract-CR-identity} holds.

For arbitrary $\xi,\eta\in Y$, the same theorem yields the exact equivalence
\begin{equation}
 i(\xi)\rightsquigarrow_\Phi i(\eta)
 \quad\Longleftrightarrow\quad
 \xi\rightsquigarrow_\Psi\eta.
 \label{eq:abstract-section-relation-equivalence}
\end{equation}
Therefore mutual ambient chain attainability is preserved and reflected by $i$. The map $i$ consequently induces a bijection of chain components. Since \eqref{eq:abstract-section-relation-equivalence} also preserves one-way attainability, it preserves and reflects the order induced on those components.
\end{pf}

\begin{remark}
\label{rem:abstract-source-target-asymmetry}
Uniform tracking does not force a chain source to lie on the equivariant section. It forces only the target. This distinction is essential for non-surjective semiflows and reappears below in the asymmetry between graph attractors and full-fiber dual repellers.
\end{remark}

\section{Invariant graph and exponential tracking}
\label{sec:graph}

We first derive the geometric consequences of finite-window contraction. The proofs are included because the same estimates will be needed later in the chain-recurrence argument.

\subsection{Graph transform}
\label{subsec:graph-transform}

Let \(C(\Omega,K)\) be the space of continuous maps \(g:\Omega\to K\), equipped with
\begin{equation}
 d_\infty(g_1,g_2)
 :=\sup_{\omega\in\Omega}
 \lvert g_1(\omega)-g_2(\omega)\rvert_\omega.
 \label{eq:section-sup-metric}
\end{equation}
Because the moving norms are uniformly equivalent to the Euclidean norm, \(d_\infty\) is equivalent to the usual uniform metric; hence \(C(\Omega,K)\) is complete. Define the graph transform \(\Gcal:C(\Omega,K)\to C(\Omega,K)\) by
\begin{equation}
 (\Gcal g)(\omega)
 :=\varphi\bigl(\tau,\theta_{-\tau}\omega, 
 g(\theta_{-\tau}\omega)\bigr).
 \label{eq:graph-transform}
\end{equation}

\begin{lemma}
\label{lem:graph-transform-contraction}
Under Assumption~\ref{ass:well-posedness} and Definition~\ref{def:block-contraction}, the graph transform \(\Gcal\) is a contraction with factor \(q\):
\begin{equation}
 d_\infty(\Gcal g_1,\Gcal g_2)
 \leq qd_\infty(g_1,g_2).
 \label{eq:graph-transform-contraction}
\end{equation}
\end{lemma}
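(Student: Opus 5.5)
The plan is to verify the contraction estimate pointwise in $\omega$ and then take the supremum; before that, we check that $\Gcal$ really maps $C(\Omega,K)$ into itself. For well-definedness, fix $g\in C(\Omega,K)$. The map $\omega\mapsto\theta_{-\tau}\omega$ is continuous because $\theta$ is a continuous flow. Hence $\omega\mapsto(\theta_{-\tau}\omega,g(\theta_{-\tau}\omega))$ is continuous into $\Omega\times K$. Composing with the continuous solution map $\varphi(\tau,\cdot,\cdot)$ of Assumption~\ref{ass:well-posedness} shows that $\Gcal g$ is continuous. Positive invariance \eqref{eq:positive-invariance} gives $(\Gcal g)(\omega)\in K$. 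Thus $\Gcal g\in C(\Omega,K)$.

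For the estimate, fix $\omega\in\Omega$ and put $\omega':=\theta_{-\tau}\omega$. Then $\theta_\tau\omega'=\omega$, so the moving norm at the target of the window is exactly $\lvert\cdot\rvert_\omega$. Applying the block contraction \eqref{eq:block-contraction} at the base point $\omega'$ with $x=g_1(\omega')$ and $y=g_2(\omega')$ gives
\[
\lvert(\Gcal g_1)(\omega)-(\Gcal g_2)(\omega)\rvert_\omega
=\bigl\lvert\varphi(\tau,\omega',g_1(\omega'))-\varphi(\tau,\omega',g_2(\omega'))\bigr\rvert_{\theta_\tau\omega'}
\leq q\,\lvert g_1(\omega')-g_2(\omega')\rvert_{\omega'}.
\]
The right-hand side is at most $q\,d_\infty(g_1,g_2)$, since $\omega'$ is just another point of $\Omega$. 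Taking the supremum over $\omega$ yields \eqref{eq:graph-transform-contraction}.

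The only point requiring care is the matching of the moving norms. The metric $d_\infty$ evaluates the difference at $\omega$ in the norm $\lvert\cdot\rvert_\omega$. Definition~\ref{def:block-contraction}, in turn, measures the output in the norm at $\theta_\tau$ of the starting base point. Using the invertibility of the base flow, so that $\theta_\tau\theta_{-\tau}\omega=\omega$, aligns these norms exactly. No further obstacle is expected. Completeness of $C(\Omega,K)$, which the paper already noted, is not needed for this lemma itself; it will be needed only afterwards, when Banach's fixed point theorem is applied.
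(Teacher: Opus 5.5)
Your proposal is correct and follows the paper's own argument: apply the block contraction \eqref{eq:block-contraction} at the base point $\theta_{-\tau}\omega$, use $\theta_\tau\theta_{-\tau}\omega=\omega$ to match the moving norms, and take the supremum over $\omega$. Your extra check that $\Gcal g$ is continuous and $K$-valued, so that $\Gcal$ maps $C(\Omega,K)$ into itself, is a detail the paper leaves implicit.
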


\begin{pf}
For every \(\omega\in\Omega\), apply \eqref{eq:block-contraction} with base point \(\theta_{-\tau}\omega\). Since \(\theta_\tau(\theta_{-\tau}\omega)=\omega\),
\begin{equation}
 \left\lvert(\Gcal g_1)(\omega)-(\Gcal g_2)(\omega)\right\rvert_\omega
 \leq q\left\lvert g_1(\theta_{-\tau}\omega)-g_2(\theta_{-\tau}\omega)\right\rvert_{\theta_{-\tau}\omega}.
 \label{eq:graph-transform-proof-estimate}
\end{equation}
Taking the supremum over \(\omega\) proves \eqref{eq:graph-transform-contraction}.
\end{pf}

\begin{theorem}
\label{thm:invariant-graph}
Suppose that the cocycle is uniformly contracting over a finite window. Then there exists a unique continuous map \(h:\Omega\to K\) satisfying
\begin{equation}
 \varphi(t,\omega,h(\omega))
 =h(\theta_t\omega),
 \qquad t\geq0,
 \label{eq:full-graph-invariance}
\end{equation}
for every \(\omega\in\Omega\). Equivalently, the graph
\begin{equation}
 \Gamma_h:=\graph(h)
 =\{(\omega,h(\omega)):\omega\in\Omega\}
 \label{eq:Gamma-h}
\end{equation}
is compact and invariant under \(\Pi\). Moreover, \(\Pi|_{\Gamma_h}\) is topologically conjugate to \(\theta\) through the homeomorphism
\begin{equation}
 \iota:\Omega\to\Gamma_h,
 \qquad \iota(\omega)=(\omega,h(\omega)).
 \label{eq:graph-embedding}
\end{equation}
\end{theorem}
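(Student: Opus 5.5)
The plan is to use the Banach fixed point theorem for the graph transform, and then to upgrade invariance at the window $\tau$ to invariance for all $t\geq0$ by a second uniqueness argument. By Lemma~\ref{lem:graph-transform-contraction}, $\Gcal$ is a $q$-contraction on the complete space $(C(\Omega,K),d_\infty)$. Before applying this, I must check that $\Gcal$ maps $C(\Omega,K)$ into itself. Continuity of $\Gcal g$ follows from continuity of $\varphi$ (Assumption~\ref{ass:well-posedness}), of $\theta_{-\tau}$, and of $g$. Values lie in $K$ by positive invariance \eqref{eq:positive-invariance}. The Banach fixed point theorem then gives a unique $h\in C(\Omega,K)$ with $\Gcal h=h$. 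Replacing $\omega$ by $\theta_\tau\omega$, this says $\varphi(\tau,\omega,h(\omega))=h(\theta_\tau\omega)$. By the cocycle identity \eqref{eq:cocycle-identity} and induction, the same relation holds with $\tau$ replaced by $k\tau$ for every $k\in\N$.

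To obtain \eqref{eq:full-graph-invariance} for all $t\geq0$, I fix $s\geq0$ and set $h_s(\omega):=\varphi(s,\theta_{-s}\omega,h(\theta_{-s}\omega))$. This map is continuous with values in $K$. The cocycle identity lets the two time shifts commute:
\[
\varphi(\tau,\theta_{-\tau}\omega,h_s(\theta_{-\tau}\omega))=\varphi(\tau+s,\theta_{-\tau-s}\omega,h(\theta_{-\tau-s}\omega))=\varphi(s,\theta_{-s}\omega,\varphi(\tau,\theta_{-\tau-s}\omega,h(\theta_{-\tau-s}\omega))).
\]
The inner term equals $h(\theta_{-s}\omega)$ by the fixed-point property at $\theta_{-s}\omega$, so the whole expression is $h_s(\omega)$. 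Hence $\Gcal h_s=h_s$, and uniqueness of the fixed point gives $h_s=h$. Substituting $\omega\mapsto\theta_s\omega$ yields $\varphi(s,\omega,h(\omega))=h(\theta_s\omega)$.

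Uniqueness in the theorem is immediate: any continuous $\tilde h$ satisfying \eqref{eq:full-graph-invariance} satisfies it in particular at $t=\tau$, so it is a fixed point of $\Gcal$ and equals $h$. This commutation step is the only point that needs care. I expect it to be the main (mild) obstacle, because it requires $\theta$ to be a genuine flow so that $\theta_{-s}$ makes sense, and the cocycle identity has to be applied in the right order.

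For the remaining claims, $\Gamma_h$ is compact as the continuous image of the compact space $\Omega$ under $\iota$. The map $\iota$ is a continuous bijection onto $\Gamma_h$ with continuous inverse $\pi_\Omega|_{\Gamma_h}$, so it is a homeomorphism. Invariance \eqref{eq:full-graph-invariance} reads $\Pi_t\circ\iota=\iota\circ\theta_t$, which gives the conjugacy and also $\Pi_t(\Gamma_h)=\iota(\theta_t\Omega)=\Gamma_h$, since $\theta_t$ is surjective. Thus $\Gamma_h$ is (strictly) invariant. Conversely, invariance of the graph in the sense $\Pi_t(\Gamma_h)\subseteq\Gamma_h$ forces $\varphi(t,\omega,h(\omega))=h(\theta_t\omega)$ by comparing base coordinates, which gives the stated equivalence.
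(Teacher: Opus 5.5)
Your proposal is correct and follows the paper's proof essentially verbatim: Banach's fixed-point theorem for $\Gcal$ on $C(\Omega,K)$ gives $h$, and the shifted section $h_s(\omega)=\varphi(s,\theta_{-s}\omega,h(\theta_{-s}\omega))$ is shown to be another fixed point of $\Gcal$, hence $h_s=h$. You also spell out details the paper leaves implicit, and all of them check out: that $\Gcal$ maps $C(\Omega,K)$ into itself, the explicit cocycle computation behind $\Gcal h_s=h_s$, uniqueness among continuous solutions of the full-time invariance identity, the homeomorphism $\iota$, and strict invariance of $\Gamma_h$ via surjectivity of $\theta_t$.
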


\begin{pf}
Lemma~\ref{lem:graph-transform-contraction} and the Banach fixed-point theorem give a unique \(h\in C(\Omega,K)\) satisfying \(\Gcal h=h\), or equivalently
\begin{equation}
 h(\theta_\tau\omega)=\varphi(\tau,\omega,h(\omega)).
 \label{eq:tau-invariance}
\end{equation}
It remains to prove invariance for arbitrary times. For fixed \(s\geq0\), define
\begin{equation}
 h_s(\omega)
 :=\varphi\bigl(s,\theta_{-s}\omega,h(\theta_{-s}\omega)\bigr).
 \label{eq:shifted-section}
\end{equation}
Using the cocycle identity and \eqref{eq:tau-invariance}, one verifies that \(\Gcal h_s=h_s\). By uniqueness of the fixed point of \(\Gcal\), \(h_s=h\). Replacing \(\omega\) by \(\theta_s\omega\) in \eqref{eq:shifted-section} yields \eqref{eq:full-graph-invariance}. The conjugacy relation
\begin{equation}
 \Pi_t\circ\iota=\iota\circ\theta_t
 \label{eq:graph-conjugacy}
\end{equation}
then follows immediately.
\end{pf}

The invariant graph can also be represented as a pullback limit. For any continuous section \(g_0\in C(\Omega,K)\),
\begin{equation}
 h(\omega)
 =\lim_{j\to\infty}
 \varphi\bigl(j\tau,\theta_{-j\tau}\omega,
 g_0(\theta_{-j\tau}\omega)\bigr),
 \label{eq:pullback-graph-representation}
\end{equation}
uniformly in \(\omega\).

\subsection{Uniform exponential tracking}
\label{subsec:tracking}

The finite-window contraction established above extends to a uniform
continuous-time exponential estimate. We first recall the notion of a global
attractor and then show that the invariant graph is the unique global attractor
of the skew-product semiflow.

\begin{definition}
\label{def:global-attractor}
For a continuous semiflow $\Phi$ on a compact metric space $X$, a global attractor is a compact strictly invariant set $\mathcal A$ that attracts $X$ uniformly and is minimal among compact sets with this property. Equivalently,
\begin{equation}
 \mathcal A
 =\bigcap_{s\geq0}
 \overline{\bigcup_{t\geq s}\Phi_t(X)}.
 \label{eq:global-attractor-definition}
\end{equation}
\end{definition}

\begin{theorem}
\label{thm:exponential-tracking}
Under the assumptions of Theorem~\ref{thm:invariant-graph}, let
\begin{equation}
 \lambda:=\frac{-\log q}{\tau}>0,
 \qquad
 C:=\frac{L_\tau}{q}.
 \label{eq:tracking-constants}
\end{equation}
Then
\begin{equation}
 \left\lvert
 \varphi(t,\omega,x)-h(\theta_t\omega)
 \right\rvert_{\theta_t\omega}
 \leq
 C\e^{-\lambda t}
 \lvert x-h(\omega)\rvert_\omega
 \label{eq:exponential-tracking}
\end{equation}
for every \(t\geq0\), \(\omega\in\Omega\), and \(x\in K\). Moreover, if \(d_H\) denotes the Hausdorff distance induced by the fixed product metric \(D\), then, with \(c_P:=\max\{1,m_P^{-1/2}\}\),
\begin{equation}
 d_H\bigl(\Pi_t(Z),\Gamma_h\bigr)
 \leq c_PC\e^{-\lambda t}\diam_P(K),
 \label{eq:global-attractor-Hausdorff}
\end{equation}
where
\begin{equation}
 \diam_P(K):=
 \sup_{\omega\in\Omega}\sup_{x,y\in K}\lvert x-y\rvert_\omega.
 \label{eq:moving-diameter}
\end{equation}
Consequently, \(\Gamma_h\) is the unique global attractor of \(\Pi\) on \(Z\).
\end{theorem}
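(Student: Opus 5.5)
The plan is to compare an arbitrary trajectory with the graph trajectory over the same base orbit, so that the contraction in Definition~\ref{def:block-contraction} can be iterated. Fix \(\omega\in\Omega\) and \(x\in K\). By Theorem~\ref{thm:invariant-graph}, \(h(\theta_t\omega)=\varphi(t,\omega,h(\omega))\), so the left-hand side of \eqref{eq:exponential-tracking} is the distance between two solutions of the cocycle over the same base point. Write \(t=j\tau+s\) with \(j\in\N\cup\{0\}\) and \(s\in[0,\tau)\). Applying the cocycle identity \eqref{eq:cocycle-identity} repeatedly and \eqref{eq:block-contraction} at the base points \(\theta_{k\tau}\omega\), \(k=0,\dots,j-1\), I will obtain by induction
\[
\bigl\lvert\varphi(j\tau,\omega,x)-h(\theta_{j\tau}\omega)\bigr\rvert_{\theta_{j\tau}\omega}\leq q^{j}\lvert x-h(\omega)\rvert_\omega .
\]
Here positive invariance of \(K\) (Assumption~\ref{ass:well-posedness}) ensures that both intermediate states stay in \(K\), so \eqref{eq:block-contraction} applies at each step.

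For the remaining time \(s\), apply \eqref{eq:intermediate-lipschitz} at base point \(\theta_{j\tau}\omega\). This gives the factor \(L_\tau q^j\). Since \(j>t/\tau-1\), we have \(q^j\leq q^{-1}q^{t/\tau}=q^{-1}\e^{-\lambda t}\), which yields \eqref{eq:exponential-tracking} with \(C=L_\tau/q\).

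For the Hausdorff estimate \eqref{eq:global-attractor-Hausdorff}, I bound the two one-sided distances.
\begin{itemize}
\item Every point of \(\Pi_t(Z)\) has the form \((\theta_t\omega,\varphi(t,\omega,x))\). Its \(D\)-distance to the graph point \((\theta_t\omega,h(\theta_t\omega))\) is the Euclidean norm of the fiber difference. This is at most \(m_P^{-1/2}C\e^{-\lambda t}\lvert x-h(\omega)\rvert_\omega\leq c_PC\e^{-\lambda t}\diam_P(K)\), because \(h(\omega)\in K\).
\item Conversely, \(\Gamma_h\subseteq\Pi_t(Z)\). Given \(\eta\in\Omega\), set \(\omega=\theta_{-t}\eta\); then \(\Pi_t\iota(\omega)=\iota(\eta)\) by \eqref{eq:graph-conjugacy}, using that \(\theta\) is a flow. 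So this one-sided distance is zero.
\end{itemize}

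The last claim, that \(\Gamma_h\) is the unique global attractor, follows from the characterization \eqref{eq:global-attractor-definition}. Since \(\Gamma_h\subseteq\Pi_t(Z)\) for all \(t\), \(\Gamma_h\) is contained in the intersection. Conversely, every point of \(\overline{\bigcup_{t\ge s}\Pi_t(Z)}\) lies within \(c_PC\e^{-\lambda s}\diam_P(K)\) of the closed set \(\Gamma_h\), so the intersection is contained in \(\Gamma_h\). Strict invariance of \(\Gamma_h\) comes from surjectivity of \(\theta_t\), and uniqueness follows from the equivalent intersection formula.

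The main obstacle is only bookkeeping. First, the moving norm at each step is taken at the correct base point, \(\theta_{k\tau}\omega\), so that the contraction estimates chain correctly. Second, the constant \(C\) must be correct uniformly in the remainder \(s\), including the case \(j=0\) where \(L_\tau\geq1\) covers \(t<\tau\).
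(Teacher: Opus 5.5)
Your proposal is correct and follows the paper's proof essentially step by step: you use the same decomposition $t=j\tau+s$ with iterated window contraction followed by the intermediate Lipschitz bound, the same same-base-point pairing for one Hausdorff semidistance, and the same inclusion $\Gamma_h\subseteq\Pi_t(Z)$ for the other. The only variation is in the final assertion. The paper verifies minimality directly, showing that any compact set attracting $Z$ contains $\Gamma_h$ by applying tracking to the points $(\theta_{-t}\eta,x_0)$, whereas you invoke the equivalent intersection formula \eqref{eq:global-attractor-definition}, which your Hausdorff bound together with $\Gamma_h\subseteq\Pi_t(Z)$ verifies immediately.
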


\begin{pf}
Write \(t=j\tau+s\), where \(j\in\N\cup\{0\}\) and \(s\in[0,\tau)\). By the cocycle property, graph invariance, and \(j\) successive applications of \eqref{eq:block-contraction},
\begin{equation}
 \left\lvert\varphi(j\tau,\omega,x)-h(\theta_{j\tau}\omega)\right\rvert_{\theta_{j\tau}\omega}
 \leq q^j\lvert x-h(\omega)\rvert_\omega.
 \label{eq:integer-window-tracking}
\end{equation}
Applying the intermediate-time estimate \eqref{eq:intermediate-lipschitz} over the remaining interval of length \(s\) gives
\begin{equation}
 \left\lvert\varphi(t,\omega,x)-h(\theta_t\omega)\right\rvert_{\theta_t\omega}
 \leq L_\tau q^j\lvert x-h(\omega)\rvert_\omega.
 \label{eq:tracking-block-estimate}
\end{equation}
Since \(j\tau\geq t-\tau\),
\begin{equation}
 q^j=\e^{-\lambda j\tau}
 \leq \e^{-\lambda(t-\tau)}
 =q^{-1}\e^{-\lambda t},
 \label{eq:block-to-exponential}
\end{equation}
which proves \eqref{eq:exponential-tracking}.

For every \(z=(\omega,x)\in Z\), the point \(\Pi_tz\) has the same base coordinate as \(\iota(\theta_t\omega)\), and the fiber distance between them is bounded by \eqref{eq:exponential-tracking}. This gives one side of \eqref{eq:global-attractor-Hausdorff}. Conversely, because the base is a flow, every graph point satisfies
\begin{equation}
 \iota(\eta)
 =\Pi_t\bigl(\iota(\theta_{-t}\eta)\bigr)
 \in\Pi_t(Z),
 \label{eq:graph-contained-image}
\end{equation}
so \(\Gamma_h\subset\Pi_t(Z)\) and the opposite Hausdorff semidistance is zero.

Finally, if \(A\subset Z\) is compact and attracts \(Z\), then for any \(\eta\in\Omega\), any \(x_0\in K\), and \(z_t=(\theta_{-t}\eta,x_0)\),
\begin{equation}
 \dist\bigl(\iota(\eta),A\bigr)
 \leq
 D\bigl(\iota(\eta),\Pi_tz_t\bigr)
 +\dist\bigl(\Pi_tz_t,A\bigr).
 \label{eq:global-attractor-minimality}
\end{equation}
The first term tends to zero uniformly by tracking and the second by attraction of \(Z\). Hence \(\Gamma_h\subset A\). Therefore \(\Gamma_h\) is the minimal compact set attracting \(Z\), and thus the unique global attractor in the sense of Definition~\ref{def:global-attractor}.
\end{pf}

Define the graph-distance function
\begin{equation}
 V(\omega,x)
 :=\lvert x-h(\omega)\rvert_\omega.
 \label{eq:graph-distance-V}
\end{equation}
It is continuous on \(Z\) and satisfies
\begin{equation}
 V(\Pi_t(\omega,x))
 \leq C\e^{-\lambda t}V(\omega,x).
 \label{eq:V-decay}
\end{equation}
Although \(V\) need not decrease at every instant, it decreases by a common factor over every sufficiently long flight time. The next section converts this discrete Lyapunov property into quantitative estimates for arbitrary chains.

\subsection{Limit sets}
\label{subsec:limit-sets}

For \(\omega\in\Omega\), denote by \(\omega_\theta(\omega)\) its omega-limit set under the base flow. For \(z\in Z\), write \(\omega_\Pi(z)\) for the omega-limit set under the skew product.

\begin{proposition}
\label{prop:omega-limit-lifting}
For every \((\omega,x)\in Z\),
\begin{equation}
 \omega_\Pi(\omega,x)
 =\iota\bigl(\omega_\theta(\omega)\bigr)
 =\{(\eta,h(\eta)):\eta\in\omega_\theta(\omega)\}.
 \label{eq:omega-limit-identity}
\end{equation}
\end{proposition}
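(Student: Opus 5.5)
We prove the two inclusions directly from the definition of the omega-limit set as the set of limits of sequences \(\Pi_{t_k}z\) with \(t_k\to\infty\). Compactness of \(Z\) and \(\Omega\) is used throughout. The only analytic input is the exponential tracking estimate \eqref{eq:exponential-tracking} together with the norm equivalence \eqref{eq:P-uniform-bounds}, in the form
\[
D\bigl(\Pi_t(\omega,x),\iota(\theta_t\omega)\bigr)
=\lVert\varphi(t,\omega,x)-h(\theta_t\omega)\rVert
\leq m_P^{-1/2}C\e^{-\lambda t}\diam_P(K).
\]

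For the inclusion \(\omega_\Pi(\omega,x)\subseteq\iota(\omega_\theta(\omega))\), take \(w\in\omega_\Pi(\omega,x)\) and \(t_k\to\infty\) with \(\Pi_{t_k}(\omega,x)\to w\). Projecting with the continuous map \(\pi_\Omega\) and using \eqref{eq:projection-semiconjugacy} gives \(\theta_{t_k}\omega\to\eta:=\pi_\Omega w\), so \(\eta\in\omega_\theta(\omega)\). By the displayed tracking bound, \(D(\Pi_{t_k}(\omega,x),\iota(\theta_{t_k}\omega))\to0\). Continuity of \(\iota\) gives \(\iota(\theta_{t_k}\omega)\to\iota(\eta)\), hence \(w=\iota(\eta)\).

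For the reverse inclusion, let \(\eta\in\omega_\theta(\omega)\) with \(\theta_{t_k}\omega\to\eta\) and \(t_k\to\infty\). Continuity of \(\iota\) yields \(\iota(\theta_{t_k}\omega)\to\iota(\eta)\). The tracking bound then gives \(\Pi_{t_k}(\omega,x)\to\iota(\eta)\), so \(\iota(\eta)\in\omega_\Pi(\omega,x)\). The second equality in \eqref{eq:omega-limit-identity} is simply the definition of \(\iota\).

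No step is a genuine obstacle. The one point requiring care is that tracking is measured against the moving graph point \(\iota(\theta_t\omega)\) rather than against a fixed point. The argument therefore has to pair the two sequences along the same times \(t_k\) and pass both limits simultaneously, using continuity of \(h\). This is exactly what the two paragraphs above do.

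Alternatively, the statement follows from Assumption~\ref{ass:abstract-tracking} with \(p=\pi_\Omega\) and \(i=\iota\); the proof above is just that abstract argument written out concretely.
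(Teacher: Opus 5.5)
Your proof is correct and follows essentially the same route as the paper. Both arguments obtain the two inclusions by pairing $\Pi_{t_k}(\omega,x)$ with $\iota(\theta_{t_k}\omega)$ along the same times, using exponential tracking, the semiconjugacy $\pi_\Omega\circ\Pi_t=\theta_t\circ\pi_\Omega$, and continuity of $h$; your explicit bound $D(\Pi_t(\omega,x),\iota(\theta_t\omega))\leq m_P^{-1/2}C\e^{-\lambda t}\diam_P(K)$ simply makes precise the tracking step that the paper invokes.
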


\begin{pf}
Let \(\eta\in\omega_\theta(\omega)\). There is a sequence \(t_j\to\infty\) such that \(\theta_{t_j}\omega\to\eta\). Exponential tracking and continuity of \(h\) imply
\begin{equation}
 \Pi_{t_j}(\omega,x)
 =\bigl(\theta_{t_j}\omega,\varphi(t_j,\omega,x)\bigr)
 \longrightarrow(\eta,h(\eta)),
 \label{eq:omega-lift-convergence}
\end{equation}
so the right-hand side of \eqref{eq:omega-limit-identity} is contained in the left-hand side.

Conversely, let \(z_*\in\omega_\Pi(\omega,x)\). For some \(t_j\to\infty\), \(\Pi_{t_j}(\omega,x)\to z_*\). Projection gives \(\theta_{t_j}\omega\to\eta:=\pi_\Omega(z_*)\), hence \(\eta\in\omega_\theta(\omega)\). Tracking gives
\begin{equation}
 D\bigl(\Pi_{t_j}(\omega,x),\iota(\theta_{t_j}\omega)\bigr)\to0,
 \label{eq:omega-limit-tracking-distance}
\end{equation}
and therefore \(z_*=\iota(\eta)\). This proves the reverse inclusion.
\end{pf}

\section{Chain attainability and attractor--repeller structure}
\label{sec:recurrent}

The abstract source--target theorem already determines the ambient chain relation. We first derive a quantitative estimate that measures how close every approximate chain target must lie to the invariant graph and then record the controlled skew-product specialization of the abstract theorem. The remaining subsections identify the recurrent core, construct complete Lyapunov functions, and describe the attractor--repeller asymmetry created by a non-surjective contracting semiflow.

\subsection{A quantitative estimate along chains}
\label{subsec:chain-estimate}

For $\varepsilon\geq0$, define the modulus of continuity of $V$ by
\begin{equation}
 \omega_V(\varepsilon)
 :=\sup\{\lvert V(z)-V(w)\rvert:
 z,w\in Z,\ D(z,w)\leq\varepsilon\}.
 \label{eq:V-modulus-continuity}
\end{equation}
Compactness of $Z$ implies $\omega_V(\varepsilon)\to0$ as $\varepsilon\downarrow0$.

\begin{lemma}
\label{lem:chain-distance-estimate}
Choose $T>0$ such that
\begin{equation}
 \rho_T:=C\e^{-\lambda T}<1.
 \label{eq:rho-T-definition}
\end{equation}
Let $z_0,\ldots,z_m$ be an $(\varepsilon,T)$-chain for $\Pi$ with $m\geq1$. Then
\begin{equation}
 V(z_j)
 \leq \rho_T^jV(z_0)
 +\omega_V(\varepsilon)
 \frac{1-\rho_T^j}{1-\rho_T},
 \qquad j=1,\ldots,m.
 \label{eq:chain-distance-iteration}
\end{equation}
In particular,
\begin{equation}
 V(z_m)
 \leq \rho_TV(z_0)
 +\frac{\omega_V(\varepsilon)}{1-\rho_T},
 \label{eq:chain-endpoint-bound}
\end{equation}
and if the chain returns to its initial point, then
\begin{equation}
 V(z_0)
 \leq\frac{\omega_V(\varepsilon)}{1-\rho_T}.
 \label{eq:return-chain-distance-bound}
\end{equation}
\end{lemma}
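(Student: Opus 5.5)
The plan is a one-step recursion followed by induction. First I establish the basic inequality for a single leg of the chain:
\[
V(z_j)\leq\rho_TV(z_{j-1})+\omega_V(\varepsilon),\qquad j=1,\ldots,m.
\]
Fix $j$ and let $t_j\geq T$ be the flight time of the $j$-th leg. Since $D(\Pi_{t_j}z_{j-1},z_j)<\varepsilon$, the definition \eqref{eq:V-modulus-continuity} of the modulus gives
\[
V(z_j)\leq V(\Pi_{t_j}z_{j-1})+\omega_V(\varepsilon).
\]
By the decay estimate \eqref{eq:V-decay}, which comes from Theorem~\ref{thm:exponential-tracking}, we have $V(\Pi_{t_j}z_{j-1})\leq C\e^{-\lambda t_j}V(z_{j-1})$. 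Because $t_j\geq T$, this is at most $C\e^{-\lambda T}V(z_{j-1})=\rho_TV(z_{j-1})$. Here I use that $V\geq0$ and that $\lambda>0$, so $\e^{-\lambda t}$ is decreasing in $t$.

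Next I unroll the recursion by induction on $j$. The case $j=1$ is exactly the one-step inequality, since $(1-\rho_T)/(1-\rho_T)=1$. For the inductive step, I substitute the bound for $V(z_{j-1})$ into the one-step inequality. This gives
\[
V(z_j)\leq\rho_T^jV(z_0)+\omega_V(\varepsilon)\bigl(\rho_T(1+\cdots+\rho_T^{j-2})+1\bigr),
\]
and the bracket equals $(1-\rho_T^j)/(1-\rho_T)$. This is \eqref{eq:chain-distance-iteration}.

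For \eqref{eq:chain-endpoint-bound}, I take $j=m$ and use $\rho_T^m\leq\rho_T$, which holds because $m\geq1$ and $0\leq\rho_T<1$. I also use $1-\rho_T^m\leq1$. For a closed chain, $z_m=z_0$, so \eqref{eq:chain-endpoint-bound} becomes $V(z_0)\leq\rho_TV(z_0)+\omega_V(\varepsilon)/(1-\rho_T)$. Rearranging gives $(1-\rho_T)V(z_0)\leq\omega_V(\varepsilon)/(1-\rho_T)$, that is,
\[
V(z_0)\leq\frac{\omega_V(\varepsilon)}{(1-\rho_T)^2}.
\]

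This last point is the main obstacle. The bound just obtained is weaker than the stated \eqref{eq:return-chain-distance-bound} by a factor $(1-\rho_T)^{-1}$. To get the sharper bound, I would go back to \eqref{eq:chain-distance-iteration} at $j=m$ with $z_m=z_0$:
\[
(1-\rho_T^m)V(z_0)\leq\omega_V(\varepsilon)\frac{1-\rho_T^m}{1-\rho_T}.
\]
Since $\rho_T^m<1$, I can divide by $1-\rho_T^m>0$, which gives exactly $V(z_0)\leq\omega_V(\varepsilon)/(1-\rho_T)$. So the sharp return bound should be derived from the full iterate rather than from the simplified endpoint bound.
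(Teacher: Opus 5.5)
Your proposal is correct and follows the paper's proof essentially step for step. The paper likewise derives the one-leg recursion $V(z_i)\leq\rho_TV(z_{i-1})+\omega_V(\varepsilon)$ from \eqref{eq:V-decay} and the modulus $\omega_V$, iterates it, and uses $\rho_T^m\leq\rho_T$ for the endpoint bound. For the return bound it also cancels the factor $1-\rho_T^m$ in the full iterate at $j=m$. As you observed, the cruder endpoint bound would only give $(1-\rho_T)^{-2}$ there.
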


\begin{pf}
For every chain leg, $t_i\geq T$, so \eqref{eq:V-decay} and \eqref{eq:rho-T-definition} imply
\begin{equation}
 V(\Pi_{t_i}z_{i-1})
 \leq\rho_TV(z_{i-1}).
 \label{eq:chain-leg-contraction}
\end{equation}
The jump condition and the definition of $\omega_V$ give
\begin{equation}
 V(z_i)
 \leq V(\Pi_{t_i}z_{i-1})+\omega_V(\varepsilon)
 \leq\rho_TV(z_{i-1})+\omega_V(\varepsilon).
 \label{eq:chain-recursion-detailed}
\end{equation}
Iteration proves \eqref{eq:chain-distance-iteration}. Since $m\geq1$, one has $\rho_T^m\leq\rho_T$, and \eqref{eq:chain-endpoint-bound} follows. If $z_m=z_0$, then
\begin{equation}
 (1-\rho_T^m)V(z_0) 
 \leq
 \omega_V(\varepsilon)\frac{1-\rho_T^m}{1-\rho_T},
 \label{eq:return-chain-cancellation}
\end{equation}
which yields \eqref{eq:return-chain-distance-bound}.
\end{pf}

\begin{remark}
\label{rem:quantitative-chain-exclusion}
If $V(z)=r>0$, choose $\varepsilon>0$ so that
\begin{equation}
 \omega_V(\varepsilon)<\frac{1}{2}(1-\rho_T)r.
 \label{eq:explicit-no-return-tolerance}
\end{equation}
Then no $(\varepsilon,T)$-chain can return from $z$ to itself. More generally, \eqref{eq:chain-endpoint-bound} bounds the graph distance of every possible chain target in terms of the graph distance of the source.
\end{remark}

\subsection{Application to the controlled skew product}
\label{subsec:ambient-chain-relation}

Recall that, for a continuous semiflow \(\Phi\),
\[
\mathcal C_\Phi^+(x)
=
\{y:x\rightsquigarrow_\Phi y\}
\]
denotes the positive chain-attainable set from \(x\).
The abstract reduction theorem now specializes directly to the controlled skew product and describes these sets for every source point of $Z$, not only for sources on the invariant graph.

\begin{corollary}
\label{cor:controlled-chain-reduction}
Suppose that the cocycle is uniformly contracting over a finite window. Let $z=(\omega,x)\in Z$ and $w\in Z$. Then
\begin{equation}
  z\rightsquigarrow_\Pi w
 \quad\Longleftrightarrow\quad
 w=\iota(\eta)\ \text{for some }\eta\in\Omega
 \text{ with }\omega\rightsquigarrow_\theta\eta.
 \label{eq:ambient-chain-relation}
\end{equation}
Equivalently,
\begin{equation}
  \mathcal C_\Pi^+(z)
 =\iota\bigl(\mathcal C_\theta^+(\pi_\Omega z)\bigr).
 \label{eq:chain-attainable-set-identity}
\end{equation}
\end{corollary}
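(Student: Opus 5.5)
The plan is to obtain the statement as a direct specialization of Theorem~\ref{thm:abstract-chain-reduction}. I will take $X=Z$ with the product metric $D$, $Y=\Omega$ with $d_\Omega$, $\Phi=\Pi$, $\Psi=\theta$ restricted to $t\geq0$, $p=\pi_\Omega$ and $i=\iota$. Once Assumption~\ref{ass:abstract-tracking} is verified for these data, \eqref{eq:abstract-chain-reduction} becomes \eqref{eq:ambient-chain-relation} verbatim, and \eqref{eq:abstract-chain-attainable-identity} becomes \eqref{eq:chain-attainable-set-identity}. So the whole work is checking the hypotheses.

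The structural identities are checked first.
\begin{itemize}
\item Both $Z=\Omega\times K$ and $\Omega$ are compact metric spaces. $\Pi$ is a continuous semiflow by Assumption~\ref{ass:well-posedness} and the cocycle identity \eqref{eq:cocycle-identity}, and $\theta$ is a continuous flow.
\item $\pi_\Omega$ is continuous and satisfies \eqref{eq:projection-semiconjugacy}.
\item $\iota$ is continuous because $h$ is continuous by Theorem~\ref{thm:invariant-graph}, and clearly $\pi_\Omega\circ\iota=\operatorname{id}_\Omega$.
\item Equivariance $\Pi_t\circ\iota=\iota\circ\theta_t$ for $t\geq0$ is exactly \eqref{eq:graph-conjugacy}, which comes from \eqref{eq:full-graph-invariance}.
\end{itemize}

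The only substantive point is the uniform tracking condition \eqref{eq:abstract-uniform-tracking}. For $z=(\omega,x)$, the points $\Pi_tz$ and $\iota(\theta_t\pi_\Omega z)=(\theta_t\omega,h(\theta_t\omega))$ share the base coordinate. Their $D$-distance is therefore $\lVert\varphi(t,\omega,x)-h(\theta_t\omega)\rVert$, which is bounded by $m_P^{-1/2}\lvert\varphi(t,\omega,x)-h(\theta_t\omega)\rvert_{\theta_t\omega}$.

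By Theorem~\ref{thm:exponential-tracking}, this is at most $m_P^{-1/2}C\e^{-\lambda t}\lvert x-h(\omega)\rvert_\omega\leq m_P^{-1/2}C\e^{-\lambda t}\diam_P(K)$, where $\diam_P(K)<\infty$ because $K$ is compact and $P$ is bounded. Hence
\[
r(T)\leq m_P^{-1/2}C\diam_P(K)\,\e^{-\lambda T}\to0 .
\]
I do not expect a real obstacle here. The one point to handle carefully is that the tracking bound must be uniform in $x\in K$ and $\omega\in\Omega$, and the moving-norm diameter bound supplies exactly that. With all hypotheses in place, the corollary follows by applying Theorem~\ref{thm:abstract-chain-reduction}.
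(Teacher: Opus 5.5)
Your proposal is correct and follows the paper's proof: it specializes Theorem~\ref{thm:abstract-chain-reduction} with $X=Z$, $Y=\Omega$, $p=\pi_\Omega$, $i=\iota$, takes the factor and section identities from \eqref{eq:projection-semiconjugacy} and \eqref{eq:graph-conjugacy}, and derives uniform tracking from \eqref{eq:exponential-tracking} and the equivalence of the fiber and product metrics. Your explicit bound $r(T)\leq m_P^{-1/2}C\,\diam_P(K)\,\e^{-\lambda T}$ spells out the step the paper states in one line.
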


\begin{pf}
Apply Theorem~\ref{thm:abstract-chain-reduction} with
\begin{equation}
 X=Z,
 \quad Y=\Omega,
 \quad \Phi=\Pi,
 \quad \Psi=\theta,
 \quad p=\pi_\Omega,
 \quad i=\iota.
 \label{eq:controlled-abstract-identification}
\end{equation}
The factor and section identities are \eqref{eq:projection-semiconjugacy} and \eqref{eq:graph-conjugacy}. Exponential tracking \eqref{eq:exponential-tracking}, compactness of $Z$, and equivalence of the fiber and product metrics imply the uniform tracking condition \eqref{eq:abstract-uniform-tracking}.
\end{pf}

\begin{remark}
\label{rem:source-target-asymmetry}
The source of a chain may lie anywhere in the fiber, but every chain target lies on $\Gamma_h$. This asymmetry is already visible for the scalar system $\dot x=-x$: every $x$ chain-attains $0$, whereas no nonzero point can be a chain target. Corollary~\ref{cor:controlled-chain-reduction} shows that the same phenomenon holds over an arbitrary compact base and that the admissible graph targets are selected exactly by the base chain relation.
\end{remark}

\subsection{The recurrent core, chain components, and Conley order}
\label{subsec:recurrent-core-order}

The ambient chain-reduction formula now determines the entire recurrent
structure of the skew product. We first identify the chain-recurrent set and
then show that the chain components, together with their induced Conley order,
are transported exactly from the base dynamics.

\begin{theorem}
\label{thm:chain-recurrence-identity}
Under finite-window contraction,
\begin{equation}
  \CR(\Pi)
 =\iota\bigl(\CR(\theta)\bigr)
 =\{(\omega,h(\omega)):\omega\in\CR(\theta)\}.
 \label{eq:CR-identity}
\end{equation}
\end{theorem}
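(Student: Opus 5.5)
This identity should follow directly from Corollary~\ref{cor:abstract-recurrent-reduction}, once the hypotheses of Assumption~\ref{ass:abstract-tracking} are checked for the concrete data
\[
X=Z,\quad Y=\Omega,\quad \Phi=\Pi,\quad \Psi=\theta,\quad p=\pi_\Omega,\quad i=\iota.
\]
That verification has already been done in the proof of Corollary~\ref{cor:controlled-chain-reduction}, so I would reuse it. Concretely, there are three things to recall.
\begin{itemize}
\item The identity $\pi_\Omega\circ\iota=\operatorname{id}_\Omega$ is immediate from the definition of $\iota$.
\item The factor identity is \eqref{eq:projection-semiconjugacy}, and the equivariance of the section is \eqref{eq:graph-conjugacy} from Theorem~\ref{thm:invariant-graph}. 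The map $\iota$ is continuous because $h$ is continuous.
\item Uniform tracking \eqref{eq:abstract-uniform-tracking} comes from Theorem~\ref{thm:exponential-tracking}.
\end{itemize}

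For the tracking step, the points $\Pi_tz$ and $\iota(\theta_t\pi_\Omega z)$ have the same base coordinate. Hence their $D$-distance is the Euclidean fiber distance, and it is bounded by
\[
m_P^{-1/2}C\e^{-\lambda t}\diam_P(K).
\]
Therefore $r(T)\le m_P^{-1/2}C\e^{-\lambda T}\diam_P(K)\to0$.

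With the assumption verified, \eqref{eq:abstract-CR-identity} reads $\CR(\Pi)=\iota(\CR(\theta))$. The second equality in \eqref{eq:CR-identity} is just the definition of $\iota$.

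As a self-contained alternative, I could argue directly from Corollary~\ref{cor:controlled-chain-reduction} with $w=z$.
\begin{itemize}
\item Suppose $z\rightsquigarrow_\Pi z$. Then $z=\iota(\eta)$ for some $\eta$, and projecting gives $\eta=\omega$. Hence $z=(\omega,h(\omega))$ with $\omega\rightsquigarrow_\theta\omega$.
\item Conversely, suppose $\omega\in\CR(\theta)$. Applying the corollary with source $\iota(\omega)$ and target $\iota(\omega)$ gives $\iota(\omega)\in\CR(\Pi)$.
\end{itemize}
Lemma~\ref{lem:chain-distance-estimate}, via \eqref{eq:return-chain-distance-bound}, gives an independent quantitative check of the inclusion $\CR(\Pi)\subset\Gamma_h$. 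If a point is chain recurrent, then letting $\varepsilon\downarrow0$ forces $V(z)=0$.

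There is no real obstacle, since the substantive work is in Theorem~\ref{thm:abstract-chain-reduction}. The only point needing care is the tracking estimate. The exponential bound is stated in the moving norm, so it must be converted to the product metric $D$ uniformly in $z$. This uses the uniform bounds \eqref{eq:P-uniform-bounds} and the finiteness of $\diam_P(K)$, which follows from compactness of $K$.
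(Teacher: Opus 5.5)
Your proposal is correct and is essentially the paper's proof: the paper argues exactly as in your ``self-contained alternative,'' applying Corollary~\ref{cor:controlled-chain-reduction} with $w=z$ and noting that $z=\iota(\eta)$ forces $z\in\Gamma_h$ and $\eta=\pi_\Omega(z)$. Your primary route through Corollary~\ref{cor:abstract-recurrent-reduction} is the same argument one level of abstraction up, since that corollary is Theorem~\ref{thm:abstract-chain-reduction} with $y=x$, and your conversion of the tracking bound to the metric $D$ is the verification the paper carries out in proving Corollary~\ref{cor:controlled-chain-reduction}.
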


\begin{pf}
Apply Corollary~\ref{cor:controlled-chain-reduction} with $w=z$. The equality $z=\iota(\eta)$ forces $z$ to lie on the graph and $\eta=\pi_\Omega(z)$. Hence
\begin{equation}
 z\rightsquigarrow_\Pi z
 \quad\Longleftrightarrow\quad
 z=\iota(\omega)\ \text{and }\omega\rightsquigarrow_\theta\omega,
 \label{eq:CR-from-ambient-relation}
\end{equation}
which is precisely \eqref{eq:CR-identity}.
\end{pf}

\begin{theorem}
\label{thm:chain-component-correspondence}
For arbitrary $\omega_1,\omega_2\in\Omega$,
\begin{equation}
 \omega_1\rightsquigarrow_\theta\omega_2
 \quad\Longleftrightarrow\quad
 \iota(\omega_1)\rightsquigarrow_\Pi\iota(\omega_2).
 \label{eq:ambient-chain-relation-equivalence}
\end{equation}
Consequently, $C\mapsto\iota(C)$ is a bijection between the chain components of the base and those of the skew product. Moreover, it preserves and reflects the chain order between components: if
\begin{equation}
 C_1\preceq_\theta C_2
 \quad\Longleftrightarrow\quad
 \omega_1\rightsquigarrow_\theta\omega_2
 \text{ for some }\omega_i\in C_i,
 \label{eq:base-component-order}
\end{equation}
then
\begin{equation}
 C_1\preceq_\theta C_2
 \quad\Longleftrightarrow\quad
 \iota(C_1)\preceq_\Pi\iota(C_2).
 \label{eq:component-order-isomorphism}
\end{equation}
\end{theorem}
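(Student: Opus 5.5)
The proof is a direct specialization of Corollary~\ref{cor:abstract-recurrent-reduction}, or equivalently of Corollary~\ref{cor:controlled-chain-reduction}, to the identification $X=Z$, $Y=\Omega$, $p=\pi_\Omega$, $i=\iota$. The argument has three steps.

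\emph{Step 1: the equivalence \eqref{eq:ambient-chain-relation-equivalence}.} Apply Corollary~\ref{cor:controlled-chain-reduction} with source $z=\iota(\omega_1)$, whose base coordinate is $\pi_\Omega z=\omega_1$, and target $w=\iota(\omega_2)$. The corollary gives $\iota(\omega_1)\rightsquigarrow_\Pi\iota(\omega_2)$ if and only if $\iota(\omega_2)=\iota(\eta)$ for some $\eta$ with $\omega_1\rightsquigarrow_\theta\eta$. Since $\iota$ is injective (because $\pi_\Omega\circ\iota=\operatorname{id}_\Omega$), the only admissible choice is $\eta=\omega_2$. This yields \eqref{eq:ambient-chain-relation-equivalence}.

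\emph{Step 2: the bijection of chain components.} By Theorem~\ref{thm:chain-recurrence-identity}, $\iota$ maps $\CR(\theta)$ bijectively onto $\CR(\Pi)$. Take $\omega_1,\omega_2\in\CR(\theta)$. Applying Step~1 in both directions shows that $\omega_1$ and $\omega_2$ are mutually chain attainable under $\theta$ if and only if $\iota(\omega_1)$ and $\iota(\omega_2)$ are mutually chain attainable under $\Pi$. So $\iota$ respects the equivalence relations on the two chain-recurrent sets. It follows that the image $\iota(C)$ of a base component $C$ is contained in a single skew-product component. Conversely, every skew-product component lies in $\CR(\Pi)=\iota(\CR(\theta))$, so its preimage under $\iota$ lies in a single base component. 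Together these give $\iota(C)$ equal to a full component, so $C\mapsto\iota(C)$ is a well-defined bijection.

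\emph{Step 3: the order isomorphism \eqref{eq:component-order-isomorphism}.} The order is defined by existence of representatives. Every point of $\iota(C_i)$ has the form $\iota(\omega_i)$ with $\omega_i\in C_i$. Step~1 then converts any witness pair for $C_1\preceq_\theta C_2$ into a witness pair for $\iota(C_1)\preceq_\Pi\iota(C_2)$, and conversely.

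The only point needing care is Step~2: one must check that components map onto components and not merely into them. This is handled by the identity $\CR(\Pi)=\iota(\CR(\theta))$, which rules out any ambient recurrent points off the graph that could enlarge a component.
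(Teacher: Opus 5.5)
Your proposal is correct and follows the paper's proof. Like the paper, you restrict Corollary~\ref{cor:controlled-chain-reduction} to graph sources and targets, using injectivity of $\iota$ to get \eqref{eq:ambient-chain-relation-equivalence}. You then use $\CR(\Pi)=\iota(\CR(\theta))$ to transport mutual attainability into a bijection of components, and read the order isomorphism off the same equivalence. Your Step~2 also spells out the ``components map onto components'' check, which the paper leaves implicit.
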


\begin{pf}
Identity \eqref{eq:ambient-chain-relation-equivalence} is the restriction of Corollary~\ref{cor:controlled-chain-reduction} to graph sources and graph targets. Theorem~\ref{thm:chain-recurrence-identity} identifies the recurrent sets, so mutual chain attainability is preserved and reflected by $\iota$. This proves the component bijection. Since the chain relation is transitive, the relation in \eqref{eq:base-component-order} is independent of the chosen representatives, and \eqref{eq:component-order-isomorphism} follows again from \eqref{eq:ambient-chain-relation-equivalence}.
\end{pf}

\begin{remark}
\label{rem:control-interpretation-CR}
Every equilibrium, periodic orbit, minimal set, or more general chain component of the closed loop is the graph lift of a corresponding recurrent object of the exosystem. The controlled fiber contributes no additional chain-recurrent class, cannot merge two distinct base components, and cannot alter the directed chain order between them.
\end{remark}

\begin{corollary}
\label{cor:chain-transitivity}
The set $\CR(\Pi)$ consists of a single chain component if and only if $\CR(\theta)$ consists of a single chain component. If the base is minimal, then $\Gamma_h=\CR(\Pi)$ is minimal and is the unique global attractor.
\end{corollary}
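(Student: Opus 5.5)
The first statement follows from Theorem~\ref{thm:chain-component-correspondence}. The map $C\mapsto\iota(C)$ is a bijection between the chain components of $\theta$ and those of $\Pi$. So $\CR(\Pi)=\iota(\CR(\theta))$ consists of exactly one component if and only if $\CR(\theta)$ does. Here we use that $\iota$ is injective, so distinct base components give distinct lifted components. Nonemptiness is automatic: $\Omega$ is compact and nonempty, so $\CR(\theta)$ contains the omega-limit set of any point and is nonempty. Hence neither side of the equivalence is vacuous.

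For the second statement, assume the base is minimal, meaning every $\theta$-orbit is dense in $\Omega$. The first step is $\CR(\theta)=\Omega$. Every point of a minimal compact system is recurrent: for $\omega\in\Omega$, minimality gives $\omega_\theta(\omega)=\Omega\ni\omega$. A recurrent point is chain recurrent, since for given $\varepsilon,T$ we may choose $t\geq T$ with $d_\Omega(\theta_t\omega,\omega)<\varepsilon$, which is a one-step chain. Theorem~\ref{thm:chain-recurrence-identity} then gives $\CR(\Pi)=\iota(\Omega)=\Gamma_h$.

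Minimality of $\Gamma_h$ under $\Pi$ comes from the conjugacy of Theorem~\ref{thm:invariant-graph}. Since $\Pi_t\circ\iota=\iota\circ\theta_t$ and $\iota:\Omega\to\Gamma_h$ is a homeomorphism, the forward orbit of $\iota(\omega)$ equals $\iota$ applied to the forward orbit of $\omega$. It remains to note that this forward orbit is dense in $\Gamma_h$. The cleanest route is Proposition~\ref{prop:omega-limit-lifting}: $\omega_\Pi(\iota(\omega))=\iota(\omega_\theta(\omega))=\iota(\Omega)=\Gamma_h$, so every forward orbit in $\Gamma_h$ accumulates on all of $\Gamma_h$. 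Equivalently, $\Gamma_h$ contains no proper nonempty closed invariant subset. Such a subset would project under $\pi_\Omega$ to a proper nonempty closed $\theta$-invariant set, using strict invariance together with the fact that $\theta$ is a flow, and minimality of the base rules this out. Finally, Theorem~\ref{thm:exponential-tracking} states that $\Gamma_h$ is the unique global attractor of $\Pi$, with no minimality needed.

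The main point needing care is the meaning of minimality for a semiflow: density of forward orbits versus density of full orbits for the base flow. I will handle this by invoking $\omega_\theta(\omega)=\Omega$, which holds for every point of a compact minimal flow, and then transporting it through Proposition~\ref{prop:omega-limit-lifting}. This avoids any separate argument about backward orbits in the skew product.
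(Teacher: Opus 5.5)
Your proposal is correct and follows the route the paper intends. The paper states this corollary without a written proof, as a direct consequence of the component bijection in Theorem~\ref{thm:chain-component-correspondence}, the identity $\CR(\Pi)=\iota(\CR(\theta))$ combined with $\CR(\theta)=\Omega$ for a minimal base, the graph conjugacy of Theorem~\ref{thm:invariant-graph}, and Theorem~\ref{thm:exponential-tracking}; you use exactly these ingredients, and your nonemptiness remark is harmless but not needed, since the bijection already matches the numbers of components.
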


\subsection{Constructive lifting of complete Lyapunov functions}
\label{subsec:complete-lyapunov}

We use a metric-semiflow version of Conley's complete Lyapunov function; compare \cite{Conley1978,Hurley1995,Patrao2011}. A continuous function $L:X\to\R$ for a continuous semiflow $\Phi$ on a compact metric space is called complete if:
\begin{enumerate}
 \item $L(\Phi_t x)\leq L(x)$ for every $x\in X$ and $t\geq0$;
 \item $L(\Phi_t x)<L(x)$ for every $x\notin\CR(\Phi)$ and every $t>0$;
 \item $L$ is constant on each chain component, takes distinct values on distinct chain components, and $L(\CR(\Phi))$ is nowhere dense in $\R$.
\end{enumerate}
Lyapunov functions for lifted finite Morse decompositions of skew-product semiflows are constructed in \cite[Theorem~5.7]{BortolanEtAl2013}. The result below works directly at the chain-recurrent level and adds an explicit weighted-supremum transverse gauge whose decay is uniform along the full semiflow.

\begin{theorem}
\label{thm:complete-lyapunov-lift}
Assume finite-window contraction and let $L_\theta:\Omega\to\R$ be a complete Lyapunov function for the base flow. Fix $\alpha\in(0,\lambda)$ and define
\begin{equation}
 W_\alpha(z)
 :=\sup_{t\geq0}\e^{\alpha t}V(\Pi_tz),
 \qquad z\in Z,
 \label{eq:adapted-transverse-gauge}
\end{equation}
where $V$ is the graph-distance function and $\lambda$ is the exponential tracking rate in Theorem~\ref{thm:exponential-tracking}. Then $W_\alpha$ is continuous,
\begin{equation}
 W_\alpha(z)=0
 \quad\Longleftrightarrow\quad
 z\in\Gamma_h,
 \label{eq:transverse-gauge-zero-set}
\end{equation}
and
\begin{equation}
 W_\alpha(\Pi_sz)
 \leq\e^{-\alpha s}W_\alpha(z),
 \qquad s\geq0.
 \label{eq:transverse-gauge-decay}
\end{equation}
For every $\gamma>0$, the function
\begin{equation}
 L_\Pi(z)
 :=L_\theta(\pi_\Omega z)+\gamma W_\alpha(z)
 \label{eq:complete-lyapunov-lift}
\end{equation}
is a complete Lyapunov function for $\Pi$. Moreover,
\begin{equation}
 L_\Pi\circ\iota=L_\theta,
 \qquad
 L_\Pi(\CR(\Pi))=L_\theta(\CR(\theta)).
 \label{eq:complete-lyapunov-values}
\end{equation}
Conversely, the restriction of any complete Lyapunov function of $\Pi$ to $\Gamma_h$, pulled back by $\iota$, is a complete Lyapunov function of the base.
\end{theorem}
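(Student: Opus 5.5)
We first establish the properties of $W_\alpha$. Finiteness and the zero set come from \eqref{eq:V-decay}: $\e^{\alpha t}V(\Pi_tz)\leq C\e^{-(\lambda-\alpha)t}V(z)$, so
\[
V(z)\leq W_\alpha(z)\leq CV(z).
\]
Hence $W_\alpha(z)=0$ exactly when $V(z)=0$, that is, when $z\in\Gamma_h$. The decay \eqref{eq:transverse-gauge-decay} follows from the semigroup property:
\[
W_\alpha(\Pi_sz)=\sup_{t\ge0}\e^{\alpha t}V(\Pi_{t+s}z)=\e^{-\alpha s}\sup_{u\ge s}\e^{\alpha u}V(\Pi_uz)\le \e^{-\alpha s}W_\alpha(z).
\]

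Continuity is the main technical point. We plan a truncation argument. For $z\in Z$ and $t\geq T$, the bound $\e^{\alpha t}V(\Pi_tz)\le C\e^{-(\lambda-\alpha)T}\sup_Z V$ is uniformly small. Therefore $W_\alpha$ is a uniform limit of $W_\alpha^T(z):=\max_{t\in[0,T]}\e^{\alpha t}V(\Pi_tz)$. Each $W_\alpha^T$ is continuous, since $(t,z)\mapsto V(\Pi_tz)$ is uniformly continuous on the compact set $[0,T]\times Z$. A uniform limit of continuous functions is continuous.

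Next we verify the three conditions for $L_\Pi$. Monotonicity is immediate, since $L_\theta(\theta_t\omega)\le L_\theta(\omega)$ and $W_\alpha$ is nonincreasing along orbits. For strict decrease, take $z\notin\CR(\Pi)$ and $t>0$. If $\omega=\pi_\Omega z\notin\CR(\theta)$, the base term decreases strictly. Otherwise $\omega\in\CR(\theta)$, and Theorem~\ref{thm:chain-recurrence-identity} forces $z\notin\Gamma_h$. Then $W_\alpha(z)>0$, and $W_\alpha(\Pi_tz)\le\e^{-\alpha t}W_\alpha(z)<W_\alpha(z)$.

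For the third condition, recall $\CR(\Pi)=\iota(\CR(\theta))\subset\Gamma_h$, where $W_\alpha=0$. On $\CR(\Pi)$ we therefore have $L_\Pi\circ\iota=L_\theta$. This gives \eqref{eq:complete-lyapunov-values}, and hence nowhere density of the value set. By Theorem~\ref{thm:chain-component-correspondence}, chain components of $\Pi$ are exactly the sets $\iota(C)$. Consequently $L_\Pi$ is constant on each component and separates distinct components, because $L_\theta$ does.

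For the converse, let $L$ be a complete Lyapunov function for $\Pi$ and set $\ell:=L\circ\iota$. The function $\ell$ is continuous, and it is monotone by the conjugacy \eqref{eq:graph-conjugacy}. If $\omega\notin\CR(\theta)$, then $\iota(\omega)\notin\CR(\Pi)$, which gives strict decrease. Corollary~\ref{cor:abstract-recurrent-reduction} transfers the component structure, and the value set of $\ell$ on $\CR(\theta)$ equals $L(\CR(\Pi))$, which is nowhere dense. The only step requiring care is the uniform-limit continuity argument; everything else reduces to the earlier identities.
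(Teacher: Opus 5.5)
Your proposal is correct and follows the paper's proof essentially step by step. It uses the weighted tracking bound for finiteness and the zero set, the semigroup shift for the decay estimate, and uniform convergence of the finite-horizon maxima for continuity; it then transfers components and recurrent values through $\CR(\Pi)=\iota(\CR(\theta))$ and pulls back by $\iota$ for the converse, with the only cosmetic difference being that you split the strict-decrease case by whether $\pi_\Omega z\in\CR(\theta)$ rather than by whether $z\in\Gamma_h$, which is equivalent given the chain-recurrence identity.
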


\begin{pf}
We verify the properties of $W_\alpha$ and then the three defining properties of completeness.

First, the exponential tracking estimate gives
\begin{equation}
 \e^{\alpha t}V(\Pi_tz)
 \leq C\e^{-(\lambda-\alpha)t}V(z),
 \qquad t\geq0.
 \label{eq:weighted-tracking-bound}
\end{equation}
Since $\lambda-\alpha>0$, the right-hand side is bounded uniformly in $t$, and therefore $W_\alpha(z)<\infty$ for every $z\in Z$. The value $t=0$ is included in the supremum, so
\begin{equation}
 W_\alpha(z)\geq V(z).
 \label{eq:gauge-dominates-distance}
\end{equation}
If $W_\alpha(z)=0$, then $V(z)=0$ and hence $z\in\Gamma_h$. Conversely, graph invariance gives $V(\Pi_tz)=0$ for every $t\geq0$ whenever $z\in\Gamma_h$. This proves \eqref{eq:transverse-gauge-zero-set}.

For continuity, define the finite-horizon maxima
\begin{equation}
 W_{\alpha,T}(z)
 :=\max_{0\leq t\leq T}\e^{\alpha t}V(\Pi_tz).
 \label{eq:truncated-transverse-gauge}
\end{equation}
The function $(t,z)\mapsto \e^{\alpha t}V(\Pi_tz)$ is continuous on the compact set $[0,T]\times Z$. The maximum theorem therefore implies that $W_{\alpha,T}$ is continuous. Let $M_V:=\max_{z\in Z}V(z)$. From \eqref{eq:weighted-tracking-bound},
\begin{equation}
 0\leq W_\alpha(z)-W_{\alpha,T}(z)
 \leq \sup_{t\geq T}\e^{\alpha t}V(\Pi_tz)
 \leq CM_V\e^{-(\lambda-\alpha)T}.
 \label{eq:uniform-tail-transverse-gauge}
\end{equation}
The final bound is independent of $z$ and converges to zero as $T\to\infty$. Thus $W_{\alpha,T}\to W_\alpha$ uniformly on $Z$, and $W_\alpha$ is continuous.

Next, for $s\geq0$, the semigroup property yields
\begin{equation}
 W_\alpha(\Pi_sz)
 =\sup_{t\geq0}\e^{\alpha t}V(\Pi_{t+s}z)
 =\e^{-\alpha s}\sup_{u\geq s}\e^{\alpha u}V(\Pi_uz)
 \leq\e^{-\alpha s}W_\alpha(z).
 \label{eq:transverse-gauge-shift-proof}
\end{equation}
This proves \eqref{eq:transverse-gauge-decay}. In particular, if $s>0$ and $z\notin\Gamma_h$, then $W_\alpha(z)>0$ and
\begin{equation}
 W_\alpha(\Pi_sz)<W_\alpha(z).
 \label{eq:strict-transverse-gauge-decay}
\end{equation}

We now verify completeness of $L_\Pi$. For every $t\geq0$, the factor relation gives
\begin{equation}
 L_\theta\bigl(\pi_\Omega(\Pi_tz)\bigr)
 =L_\theta(\theta_t\pi_\Omega z)
 \leq L_\theta(\pi_\Omega z),
 \label{eq:base-lyapunov-monotonicity-lift}
\end{equation}
while \eqref{eq:transverse-gauge-decay} shows that the transverse term is also nonincreasing. Hence $L_\Pi(\Pi_tz)\leq L_\Pi(z)$.

To prove strict decrease outside $\CR(\Pi)$, consider two cases. If $z\notin\Gamma_h$, then \eqref{eq:strict-transverse-gauge-decay} is strict for every $t>0$, whereas the base term cannot increase. Consequently,
\begin{equation}
 L_\Pi(\Pi_tz)<L_\Pi(z),\qquad t>0.
 \label{eq:complete-lift-strict-off-graph}
\end{equation}
If $z\in\Gamma_h\setminus\CR(\Pi)$, write $z=\iota(\omega)$. Theorem~\ref{thm:chain-recurrence-identity} gives $\omega\notin\CR(\theta)$. The transverse term vanishes along the whole graph orbit, and completeness of $L_\theta$ gives
\begin{equation}
 L_\Pi(\Pi_tz)
 =L_\theta(\theta_t\omega)
 <L_\theta(\omega)
 =L_\Pi(z),\qquad t>0.
 \label{eq:complete-lift-strict-on-graph}
\end{equation}
These two cases cover $Z\setminus\CR(\Pi)$.

Finally, on the recurrent set one has
\begin{equation}
 \CR(\Pi)=\iota(\CR(\theta)),
 \qquad
 W_\alpha|_{\CR(\Pi)}=0,
 \qquad
 L_\Pi\circ\iota=L_\theta.
 \label{eq:complete-lift-recurrent-restriction}
\end{equation}
The chain components correspond bijectively under $\iota$. Hence $L_\Pi$ is constant on every chain component, assigns different values to different components, and satisfies
\begin{equation}
 L_\Pi(\CR(\Pi))=L_\theta(\CR(\theta)).
 \label{eq:complete-lift-recurrent-values-proof}
\end{equation}
The set on the right is nowhere dense because $L_\theta$ is complete. This proves all parts of the stated definition and establishes \eqref{eq:complete-lyapunov-values}.

For the converse, let $\widetilde L_\Pi$ be complete for $\Pi$ and define $L_\theta:=\widetilde L_\Pi\circ\iota$. Equivariance gives
\begin{equation}
 L_\theta(\theta_t\omega)
 =\widetilde L_\Pi(\Pi_t\iota(\omega))
 \leq\widetilde L_\Pi(\iota(\omega))
 =L_\theta(\omega).
 \label{eq:complete-lift-converse-monotonicity}
\end{equation}
If $\omega\notin\CR(\theta)$, then $\iota(\omega)\notin\CR(\Pi)$ and the inequality is strict for every $t>0$. The component correspondence and the identity $\CR(\Pi)=\iota(\CR(\theta))$ transfer the remaining component and recurrent-value conditions. Thus $L_\theta$ is complete.
\end{pf}

\begin{remark}
\label{rem:supremum-gauge}
The original graph distance need not decrease monotonically during every short time interval because the system may be transiently expansive. The adapted gauge \eqref{eq:adapted-transverse-gauge} converts eventual exponential tracking into the pointwise semigroup inequality \eqref{eq:transverse-gauge-decay}. It therefore separates the transient fiber direction without imposing instantaneous contraction.
\end{remark}

\subsection{Correspondence of compact strictly invariant sets}
\label{subsec:compact-invariant-sets}

For a semiflow, forward invariance and strict invariance are different. A compact set $C\subset Z$ is called strictly invariant if
\begin{equation}
 \Pi_t(C)=C,
 \qquad t\geq0.
 \label{eq:strict-invariance-definition}
\end{equation}
For the base flow, a compact set $A\subset\Omega$ is strictly invariant if $\theta_t(A)=A$ for all $t\in\R$, equivalently for all $t\geq0$.

\begin{theorem}
\label{thm:compact-invariant-correspondence}
Suppose that finite-window contraction holds. Then every compact strictly invariant set $C\subset Z$ is contained in $\Gamma_h$. Moreover,
\begin{equation}
 A\longmapsto\widehat A:=\iota(A),
 \qquad
 C\longmapsto\pi_\Omega(C),
 \label{eq:invariant-set-bijection}
\end{equation}
are mutually inverse inclusion-preserving bijections between compact strictly invariant sets of the base flow and compact strictly invariant sets of the skew-product semiflow.
\end{theorem}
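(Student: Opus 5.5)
The plan is to prove first that a compact strictly invariant set $C\subset Z$ must lie in $\Gamma_h$, using uniform exponential tracking together with strict invariance. Fix $z\in C$ and $t\geq0$. Strict invariance $\Pi_t(C)=C$ lets us write $z=\Pi_t z_t$ for some $z_t\in C$. By \eqref{eq:V-decay},
\[
V(z)=V(\Pi_t z_t)\leq C\e^{-\lambda t}V(z_t)\leq C\e^{-\lambda t}\max_{Z}V.
\]
Letting $t\to\infty$ gives $V(z)=0$, so $z\in\Gamma_h$. Equivalently, $C=\Pi_t(C)\subset\Pi_t(Z)$ for every $t$, and \eqref{eq:global-attractor-Hausdorff} forces $C\subset\Gamma_h$. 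This inclusion is the key step. It is exactly where strict invariance, and not mere forward invariance, is needed: full fibers $\pi_\Omega^{-1}(A)$ are forward invariant but not contained in the graph.

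Next we check that the two maps land in the right classes. For a compact strictly invariant $A\subset\Omega$, the set $\iota(A)$ is compact because $\iota$ is continuous. The conjugacy \eqref{eq:graph-conjugacy} then gives
\[
\Pi_t(\iota(A))=\iota(\theta_t A)=\iota(A),\qquad t\geq0,
\]
so $\iota(A)$ is strictly invariant. For a compact strictly invariant $C\subset Z$, the set $\pi_\Omega(C)$ is compact, and the factor relation \eqref{eq:projection-semiconjugacy} gives $\theta_t(\pi_\Omega C)=\pi_\Omega(\Pi_t C)=\pi_\Omega(C)$ for $t\geq0$. Since $\theta$ is a flow, equality for $t\geq0$ extends to all $t\in\R$ by applying $\theta_{-t}$. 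Hence $\pi_\Omega(C)$ is strictly invariant for the base flow.

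Finally, we show the maps are mutually inverse and inclusion-preserving. The identity $\pi_\Omega\circ\iota=\operatorname{id}_\Omega$ gives $\pi_\Omega(\iota(A))=A$. For the other composition, the first step puts $C\subset\Gamma_h$. Since $\iota$ is a bijection $\Omega\to\Gamma_h$ with inverse $\pi_\Omega|_{\Gamma_h}$, it follows that $\iota(\pi_\Omega(C))=C$. Monotonicity under inclusion is immediate for both images. I expect no real obstacle beyond the first step.
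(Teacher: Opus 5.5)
Your proposal is correct and follows essentially the same route as the paper: strict invariance supplies a backward preimage $z_t\in C$ with $z=\Pi_t z_t$, contraction of $V$ along that preimage forces $V(z)=0$, and the bijection then follows from $\pi_\Omega\circ\iota=\operatorname{id}_\Omega$ together with $C\subset\Gamma_h$. The only difference is cosmetic: the paper iterates the one-window map $S=\Pi_\tau$ with $V(Sz)\le qV(z)$, whereas you use the continuous-time decay estimate for $V$ directly. Your write-up also spells out the strict invariance of $\pi_\Omega(C)$ for all $t\in\R$, which the paper leaves implicit.
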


\begin{pf}
Let $S:=\Pi_\tau$. Finite-window contraction and graph invariance give
\begin{equation}
 V(Sz)\leq qV(z),
 \qquad z\in Z.
 \label{eq:one-step-V-contraction}
\end{equation}
Fix $z\in C$. Since $S^n(C)=C$, for every $n\geq1$ there exists $z_{-n}\in C$ such that $S^nz_{-n}=z$. Hence
\begin{equation}
 V(z)
 =V(S^nz_{-n})
 \leq q^nV(z_{-n})
 \leq q^n\max_{w\in C}V(w).
 \label{eq:backward-invariant-V-estimate}
\end{equation}
The maximum is finite by compactness. Letting $n\to\infty$ gives $V(z)=0$, and therefore $C\subset\Gamma_h$.

If $A\subset\Omega$ is compact and strictly invariant, graph conjugacy gives $\Pi_t(\iota(A))=\iota(A)$. Conversely, if $C\subset Z$ is compact and strictly invariant, then $C\subset\Gamma_h$, the set $A=\pi_\Omega(C)$ is compact and strictly invariant, and $C=\iota(A)$. Thus the two maps in \eqref{eq:invariant-set-bijection} are mutually inverse and preserve inclusions.
\end{pf}

\begin{remark}
\label{rem:strict-vs-forward-invariance}
The conclusion is false for merely forward-invariant sets. A sufficiently large closed tube around $\Gamma_h$ may be mapped strictly into itself while containing off-graph points. The backward selections supplied by $S^n(C)=C$ are the decisive ingredient in \eqref{eq:backward-invariant-V-estimate}.
\end{remark}

\subsection{Attractors, standard dual repellers, and strict invariant cores}
\label{subsec:attractors}

The following metric definition is convenient for the basin estimates below. On compact metric spaces, continuous semiflows are proper, and this definition agrees with the standard attracting-neighborhood characterization $A=\omega(U)=\operatorname{Inv}_\Phi(U)$ after replacing $U$ by a trapping neighborhood; see \cite{KaliesVandervorst2025}.

\begin{definition}
\label{def:compact-attractor}
For a continuous semiflow $\Phi$ on a compact metric space $X$, a compact strictly invariant set $A\subset X$ is an attractor if it has an open neighborhood $U$ such that
\begin{equation}
 \lim_{t\to\infty}
 \sup_{x\in U}\dist\bigl(\Phi_t x,A\bigr)=0.
 \label{eq:uniform-neighborhood-attraction}
\end{equation}
Its basin is
\begin{equation}
 \mathcal B_\Phi(A)
 :=\{x\in X:\dist(\Phi_t x,A)\to0\}.
 \label{eq:basin-definition}
\end{equation}
The standard dual repeller of $A$ is
\begin{equation}
 A_\Phi^*
 :=\{x\in X:\omega_\Phi(x)\cap A=\varnothing\},
 \label{eq:dual-repeller-definition}
\end{equation}
where $\omega_\Phi(x)$ is the omega-limit set of $x$. For a set $Q\subset X$, we write $\Core_\Phi^{\mathrm{str}}(Q)$ for its largest compact strictly invariant subset when such a largest subset exists. In the application below, its existence and explicit form are part of the theorem.
\end{definition}

For attractors of continuous systems on compact spaces, the standard attractor--repeller theory identifies $A_\Phi^*$ with the corresponding repelling set determined by the complement of the basin; see \cite{KaliesMischaikowVandervorst2022,KaliesVandervorst2025}. In particular, for the base flow $\theta$, the set $A_\theta^*$ is compact and strictly invariant.

\begin{theorem}
\label{thm:attractor-correspondence}
The map $A\mapsto\widehat A=\iota(A)$ is an inclusion-preserving bijection between attractors of $\theta$ and attractors of $\Pi$. For every base attractor $A$,
\begin{equation}
  \mathcal B_\Pi(\widehat A)
 =\pi_\Omega^{-1}\bigl(\mathcal B_\theta(A)\bigr).
 \label{eq:basin-identity}
\end{equation}
Thus the attractor posets of the two systems are order isomorphic.
\end{theorem}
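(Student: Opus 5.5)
The proof splits into four steps: lift base attractors to $\Pi$, project attractors of $\Pi$ back to the base, check that the two maps are mutually inverse, and prove the basin identity. The inclusion-preserving property and the order isomorphism of attractor posets then follow at once.

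\emph{Lifting.} Let $A$ be an attractor of $\theta$ with open neighborhood $U_\theta$ satisfying \eqref{eq:uniform-neighborhood-attraction}. By Theorem~\ref{thm:compact-invariant-correspondence}, $\widehat A=\iota(A)$ is compact and strictly invariant. As neighborhood we take the full tube $U:=\pi_\Omega^{-1}(U_\theta)$, which is open in $Z$ and contains $\widehat A$. For $z=(\omega,x)\in U$ we split
\[
\dist\bigl(\Pi_tz,\widehat A\bigr)\leq D\bigl(\Pi_tz,\iota(\theta_t\omega)\bigr)+\dist\bigl(\iota(\theta_t\omega),\iota(A)\bigr).
\]
The first term is at most $m_P^{-1/2}C\e^{-\lambda t}\diam_P(K)$ by \eqref{eq:exponential-tracking}, uniformly in $z$. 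The second tends to zero uniformly over $\omega\in U_\theta$, by uniform continuity of $\iota$ on the compact set $\Omega$. Hence $\widehat A$ is an attractor of $\Pi$.

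\emph{Projecting.} Let $\widehat{\mathcal A}$ be an attractor of $\Pi$ with neighborhood $U$. By Theorem~\ref{thm:compact-invariant-correspondence}, $\widehat{\mathcal A}=\iota(A)$ with $A=\pi_\Omega(\widehat{\mathcal A})$ compact and strictly invariant. As base neighborhood we use $U_\theta:=\iota^{-1}(U)$, which is open in $\Omega$ and contains $A$. For $\omega\in U_\theta$ we have $\iota(\omega)\in U$, so $\dist(\Pi_t\iota(\omega),\iota(A))\to0$ uniformly. Applying the $1$-Lipschitz map $\pi_\Omega$ and using $\pi_\Omega\circ\Pi_t\circ\iota=\theta_t$ gives $\dist(\theta_t\omega,A)\to0$ uniformly on $U_\theta$. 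Hence $A$ is an attractor of $\theta$. The two maps are mutually inverse because they are restrictions of the bijection in Theorem~\ref{thm:compact-invariant-correspondence}, and inclusions are preserved in both directions for the same reason.

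\emph{Basin identity.} I expect this to be the step needing the most care, although it is still short. If $\omega\in\mathcal B_\theta(A)$, the splitting displayed above, applied to the single point $z=(\omega,x)$ with any $x\in K$, gives $z\in\mathcal B_\Pi(\widehat A)$. Conversely, if $\dist(\Pi_tz,\iota(A))\to0$, we project by $\pi_\Omega$. Since $\pi_\Omega$ is $1$-Lipschitz for $D$ and $\pi_\Omega(\iota(A))=A$, this yields $\dist(\theta_t\omega,A)\leq\dist(\Pi_tz,\iota(A))\to0$, so $\omega\in\mathcal B_\theta(A)$. This proves \eqref{eq:basin-identity}. The only point to watch is that every estimate uses fiberwise tracking uniform in $x$, so no condition on the fiber coordinate of the source ever enters. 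This is consistent with the source--target asymmetry in Remark~\ref{rem:abstract-source-target-asymmetry}.
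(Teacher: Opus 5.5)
Your proposal is correct and follows essentially the same route as the paper. The lift uses the full cylinder $\pi_\Omega^{-1}(U)$ with uniform graph tracking and uniform continuity of $\iota$; the converse uses Theorem~\ref{thm:compact-invariant-correspondence} with the base neighborhood $\iota^{-1}(N)$; and the basin identity is the same triangle-inequality and projection argument. The only difference is cosmetic: you use the explicit tracking bound and the fact that $\pi_\Omega$ is $1$-Lipschitz for $D$, where the paper invokes uniform continuity of $\iota$ and of $\pi_\Omega|_{\Gamma_h}$.
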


\begin{pf}
Let $z=(\omega,x)$. Exponential tracking gives
\begin{equation}
 D\bigl(\Pi_tz,\iota(\theta_t\omega)\bigr)
 \longrightarrow0.
 \label{eq:orbit-to-graph-orbit}
\end{equation}
If $\dist(\theta_t\omega,A)\to0$, uniform continuity of $\iota$ implies
\begin{equation}
 \dist\bigl(\iota(\theta_t\omega),\iota(A)\bigr)\to0,
 \label{eq:base-to-graph-attraction}
\end{equation}
and the triangle inequality yields $\dist(\Pi_tz,\widehat A)\to0$. Conversely, if $\dist(\Pi_tz,\widehat A)\to0$, uniform continuity of $\pi_\Omega$, together with $\pi_\Omega(\widehat A)=A$, gives $\dist(\theta_t\omega,A)\to0$. Hence \eqref{eq:basin-identity} holds.

Let $A$ be a base attractor with attracting neighborhood $U$. We verify uniform attraction of the full cylinder $\pi_\Omega^{-1}(U)$. Fix $\varepsilon>0$. By uniform continuity of $\iota$, choose $\delta>0$ so that
\[
 d_\Omega(\omega_1,\omega_2)<\delta
 \quad\Longrightarrow\quad
 D(\iota(\omega_1),\iota(\omega_2))<\frac{\varepsilon}{2}.
\]
Uniform attraction of $U$ to $A$ gives $T_A>0$ such that
\[
 \dist(\theta_t\omega,A)<\delta,
 \qquad \omega\in U,\quad t\geq T_A.
\]
Uniform graph tracking gives $T_G>0$ such that
\[
 D(\Pi_t(\omega,x),\iota(\theta_t\omega))<\frac{\varepsilon}{2},
 \qquad (\omega,x)\in Z,\quad t\geq T_G.
\]
For $t\geq\max\{T_A,T_G\}$ and $z=(\omega,x)\in\pi_\Omega^{-1}(U)$, choose $a_t\in A$ with $d_\Omega(\theta_t\omega,a_t)<\delta$. Then
\[
 \dist(\Pi_tz,\widehat A)
 \leq D(\Pi_tz,\iota(\theta_t\omega))
      +D(\iota(\theta_t\omega),\iota(a_t))
 <\varepsilon.
\]
Thus $\pi_\Omega^{-1}(U)$ is uniformly attracted to $\widehat A$, and $\widehat A$ is a skew-product attractor.

Conversely, let $C$ be an attractor of $\Pi$ with attracting neighborhood $N$. Theorem~\ref{thm:compact-invariant-correspondence} gives $C=\iota(A)$, where $A=\pi_\Omega(C)$ is compact and strictly invariant. The set
\begin{equation}
 U:=\iota^{-1}(N\cap\Gamma_h)
 \label{eq:base-attracting-neighborhood-from-graph}
\end{equation}
is an open neighborhood of $A$ in $\Omega$. Fix $\varepsilon>0$. Uniform continuity of $\pi_\Omega|_{\Gamma_h}$ provides $\rho>0$ such that
\[
 D(z_1,z_2)<\rho,\quad z_1,z_2\in\Gamma_h
 \quad\Longrightarrow\quad
 d_\Omega(\pi_\Omega z_1,\pi_\Omega z_2)<\varepsilon.
\]
Uniform attraction of $N$ to $C$ gives $T>0$ such that
\[
 \dist(\Pi_tz,C)<\rho,
 \qquad z\in N,\quad t\geq T.
\]
For $\omega\in U$, the point $\iota(\omega)$ lies in $N\cap\Gamma_h$, and graph equivariance gives $\Pi_t\iota(\omega)=\iota(\theta_t\omega)\in\Gamma_h$. Hence, for $t\geq T$, one can choose $a_t\in A$ with
\[
 D(\iota(\theta_t\omega),\iota(a_t))<\rho,
\]
and therefore $d_\Omega(\theta_t\omega,a_t)<\varepsilon$. This proves uniform attraction of $U$ to $A$, so $A$ is a base attractor. The constructions are mutually inverse and preserve inclusions.
\end{pf}

\begin{theorem}
\label{thm:attractor-repeller-correspondence}
Let $A$ be an attractor of the base, let $A_\theta^*$ be its standard dual repeller, and let $\widehat A=\iota(A)$. Then
\begin{equation}
  (\widehat A)_\Pi^*
 =\pi_\Omega^{-1}(A_\theta^*).
 \label{eq:dual-repeller-full-fibers}
\end{equation}
The largest compact strictly invariant subset of this full-fiber repeller is
\begin{equation}
 \Core_\Pi^{\mathrm{str}}\bigl((\widehat A)_\Pi^*\bigr)
 =\iota(A_\theta^*).
 \label{eq:dual-repeller-strict-core}
\end{equation}
Consequently,
\begin{equation}
 (A,A_\theta^*)
 \longmapsto
 \left(\iota(A),\pi_\Omega^{-1}(A_\theta^*)\right)
 \label{eq:attractor-repeller-pair-map}
\end{equation}
is a bijection between the attractor--repeller pairs associated with base attractors and those associated with skew-product attractors. For associated attractor--repeller pairs, define
\begin{equation}
 (A,R)\leq(B,S)
 \quad\Longleftrightarrow\quad
 A\subseteq B,
 \label{eq:attractor-repeller-pair-order}
\end{equation}
or, equivalently, $S\subseteq R$. Under this order, the correspondence \eqref{eq:attractor-repeller-pair-map} is an order isomorphism. Since the attractor posets carry their standard bounded distributive lattice structures, the order isomorphism $A\mapsto\iota(A)$ is automatically a lattice isomorphism. The associated standard dual-repeller map is $R\mapsto\pi_\Omega^{-1}(R)$, whereas $R\mapsto\iota(R)$ describes the largest compact strictly invariant cores.
\end{theorem}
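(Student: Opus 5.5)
The dual-repeller identity \eqref{eq:dual-repeller-full-fibers} will follow directly from Proposition~\ref{prop:omega-limit-lifting}. For $z=(\omega,x)\in Z$ that proposition gives $\omega_\Pi(z)=\iota(\omega_\theta(\omega))$. Since $\iota$ is injective, $\iota(\omega_\theta(\omega))\cap\iota(A)=\iota(\omega_\theta(\omega)\cap A)$, and so $\omega_\Pi(z)\cap\widehat A=\varnothing$ if and only if $\omega_\theta(\omega)\cap A=\varnothing$. By definition \eqref{eq:dual-repeller-definition}, this says $z\in(\widehat A)_\Pi^*$ if and only if $\omega\in A_\theta^*$, which is \eqref{eq:dual-repeller-full-fibers}. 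The fiber coordinate $x$ plays no role at all, and this is why the whole fiber is retained.

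For the strict core \eqref{eq:dual-repeller-strict-core}, I would use Theorem~\ref{thm:compact-invariant-correspondence}. Every compact strictly invariant $C\subset\pi_\Omega^{-1}(A_\theta^*)$ lies in $\Gamma_h$ and equals $\iota(\pi_\Omega C)$, where $\pi_\Omega C\subset A_\theta^*$ is compact and strictly invariant. Hence $C\subset\iota(A_\theta^*)$. On the other side, $A_\theta^*$ is compact and strictly invariant (as recalled before Theorem~\ref{thm:attractor-correspondence}), so $\iota(A_\theta^*)$ is compact and strictly invariant by graph conjugacy \eqref{eq:graph-conjugacy}. It is also contained in $\pi_\Omega^{-1}(A_\theta^*)$. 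Thus $\iota(A_\theta^*)$ is the largest such subset, which proves both its existence and its form.

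For the pair correspondence, Theorem~\ref{thm:attractor-correspondence} already gives that $A\mapsto\iota(A)$ is a bijection between attractors of $\theta$ and attractors of $\Pi$. By \eqref{eq:dual-repeller-full-fibers}, the associated dual repeller of $\iota(A)$ is $\pi_\Omega^{-1}(A_\theta^*)$. The map \eqref{eq:attractor-repeller-pair-map} is therefore well defined. It is bijective because each pair is determined by its attractor component. It is an order isomorphism because the order \eqref{eq:attractor-repeller-pair-order} is defined through attractor inclusion, and $\iota$ preserves and reflects inclusion, being injective with $\pi_\Omega\circ\iota=\mathrm{id}$.

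I would also check the stated equivalence with $S\subseteq R$ on the skew side. On the base this equivalence is standard. On the skew side, $\pi_\Omega^{-1}$ preserves and reflects inclusion because $\pi_\Omega$ is surjective, so the two descriptions agree. The lattice statement then follows formally: an order isomorphism between lattices preserves joins and meets, since these are determined by the order.

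The only point needing care is the strict-core step. There one must ensure that ``largest'' is meant with respect to inclusion among compact strictly invariant subsets, and that $A_\theta^*$ is genuinely strictly invariant for the base flow. That property is imported from the cited attractor--repeller theory, and I expect this to be the main (though mild) obstacle.
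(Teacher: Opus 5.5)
Your proposal is correct and follows the paper's proof essentially step for step: Proposition~\ref{prop:omega-limit-lifting} gives the full-fiber identity, Theorem~\ref{thm:compact-invariant-correspondence} gives the strict core, and the attractor bijection together with base order reversal gives the pair isomorphism. The strict invariance of $A_\theta^*$ that you flag as the main obstacle is immediate: $\mathcal B_\theta(A)$ is open and invariant under the two-sided flow, so its complement $A_\theta^*$ is compact and satisfies $\theta_t(A_\theta^*)=A_\theta^*$ for all $t\in\R$.
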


\begin{pf}
Let $z=(\omega,x)\in Z$. By Proposition~\ref{prop:omega-limit-lifting},
\begin{equation}
 z\in(\widehat A)_\Pi^*
 \Longleftrightarrow
 \omega_\Pi(z)\cap\iota(A)=\varnothing
 \Longleftrightarrow
 \iota\bigl(\omega_\theta(\omega)\bigr)\cap\iota(A)=\varnothing
 \Longleftrightarrow
 \omega_\theta(\omega)\cap A=\varnothing
 \Longleftrightarrow
 \omega\in A_\theta^*.
 \label{eq:dual-repeller-proof-equivalences}
\end{equation}
The condition depends only on the base coordinate. Hence every fiber above $A_\theta^*$ belongs to the standard dual repeller, and no point above $\Omega\setminus A_\theta^*$ does. This proves
\[
 (\widehat A)_\Pi^*=\pi_\Omega^{-1}(A_\theta^*).
\]

We next identify the strict invariant core. Since $A_\theta^*$ is compact and strictly invariant under the base flow, equivariance of $\iota$ shows that $\iota(A_\theta^*)$ is compact and strictly invariant under $\Pi$. It is contained in the full-fiber repeller because its base projection lies in $A_\theta^*$.

Let now $C\subset(\widehat A)_\Pi^*$ be any compact strictly invariant set. Theorem~\ref{thm:compact-invariant-correspondence} gives a compact strictly invariant base set $D$ such that $C=\iota(D)$. The inclusion $C\subset\pi_\Omega^{-1}(A_\theta^*)$ implies
\[
 D=\pi_\Omega(C)\subset A_\theta^*.
\]
Consequently $C=\iota(D)\subset\iota(A_\theta^*)$. Thus every compact strictly invariant subset of the repeller is contained in $\iota(A_\theta^*)$, which proves that this graph lift is the largest such subset and establishes \eqref{eq:dual-repeller-strict-core}.

The attractor correspondence is bijective by Theorem~\ref{thm:attractor-correspondence}, and \eqref{eq:dual-repeller-full-fibers} uniquely determines the dual repeller associated with each lifted attractor. If $A\subseteq B$, attractor--repeller duality in the base gives $B_\theta^*\subseteq A_\theta^*$, and therefore
\begin{equation}
 \pi_\Omega^{-1}(B_\theta^*)
 \subseteq\pi_\Omega^{-1}(A_\theta^*).
 \label{eq:dual-repeller-order-reversal}
\end{equation}
The map in \eqref{eq:attractor-repeller-pair-map} therefore preserves and reflects the order defined by the attractor coordinate, equivalently the reverse order on the repeller coordinate.
\end{pf}

\begin{remark}
\label{rem:attractor-repeller-asymmetry}
Contraction collapses attractors and all compact strictly invariant sets to the graph, but it does not collapse the standard dual repeller of a skew-product attractor. Every fiber point above the base dual repeller has an omega-limit set disjoint from the attractor, even though only the graph point in that fiber belongs to the strict invariant core. This is the invariant-set counterpart of the source--target asymmetry in Theorem~\ref{thm:abstract-chain-reduction}.
\end{remark}

\subsection{Morse, invariant-measure, and entropy consequences}
\label{subsec:known-consequences}

The invariant graph also transports finite Morse decompositions, invariant measures, and entropy. The Morse-decomposition statement is a specialization of the skew-product theory in \cite{BortolanEtAl2013}, while the measure and entropy conclusions fit the shrinking-fiber framework of \cite{Kloeckner2021}.

\begin{corollary}
\label{cor:morse-lifting}
If $\{M_1,\ldots,M_r\}$ is a finite Morse decomposition of the base, then
\begin{equation}
 \{\iota(M_1),\ldots,\iota(M_r)\}
 \label{eq:lifted-morse-decomposition}
\end{equation}
is a finite Morse decomposition of the global attractor $\Gamma_h$, with the same order. If the base Morse sets are its chain components, the lifted Morse sets are precisely the chain components of $\Pi$, and their order is the one in \eqref{eq:component-order-isomorphism}.
\end{corollary}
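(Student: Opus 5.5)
We need to prove Corollary~\ref{cor:morse-lifting}: the lifted Morse decomposition is a Morse decomposition of $\Gamma_h$ with the same order, and that chain components lift to chain components.

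The plan is to use the definition of a finite Morse decomposition: pairwise disjoint compact invariant isolated sets $M_1,\ldots,M_r$ such that every full orbit (or every point) outside their union has its $\alpha$-limit set in some $M_j$ and its $\omega$-limit set in some $M_k$ with $j>k$ (or a strict order). This is a notion for the invariant system $\theta$ and for $\Pi|_{\Gamma_h}$. By Theorem~\ref{thm:invariant-graph}, $\iota:\Omega\to\Gamma_h$ is a homeomorphism conjugating $\theta$ to $\Pi|_{\Gamma_h}$, with $\Pi_t\circ\iota=\iota\circ\theta_t$ for $t\ge0$. Since the base is a flow, $\Pi|_{\Gamma_h}$ is a flow too, via $\iota\theta_t\iota^{-1}$ for $t\in\R$. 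Every defining property of a Morse decomposition is topological and conjugacy-invariant:
\begin{itemize}
\item compactness, disjointness and invariance of $\iota(M_j)$ follow from the fact that $\iota$ is a homeomorphism;
\item isolation carries over by mapping isolating neighborhoods through $\iota$;
\item the limit-set conditions carry over because $\iota$ maps $\omega$- and $\alpha$-limit sets of $\theta$ to the corresponding limit sets in $\Gamma_h$.
\end{itemize}
For $\omega$-limit sets this last point is also Proposition~\ref{prop:omega-limit-lifting}, restricted to graph points. So $\{\iota(M_j)\}$ is a Morse decomposition of $\Gamma_h$, with the same admissible order, since connecting orbits correspond bijectively under $\iota$.

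The only subtlety is whether Morse decompositions of the global attractor $\Gamma_h$ should also be viewed as decompositions of $\Pi$ on $Z$. The statement restricts to $\Gamma_h$, so I would work inside $\Gamma_h$. I would add a remark that off-graph points have $\omega$-limit sets $\iota(\omega_\theta(\omega))$, again by Proposition~\ref{prop:omega-limit-lifting}, so they are compatible with the order. Alternatively, I would cite \cite{BortolanEtAl2013} for the ambient version.

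For the second sentence, take the base Morse sets to be the chain components of $\theta$. By Theorem~\ref{thm:chain-component-correspondence}, $C\mapsto\iota(C)$ is a bijection onto the chain components of $\Pi$. Hence the lifted Morse sets are exactly those components. The Morse order on chain components is the chain order $\preceq$, and \eqref{eq:component-order-isomorphism} shows that $\iota$ preserves and reflects it.

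The expected obstacle is bookkeeping rather than substance: matching the paper's conventions on semiflow versus flow. I would handle it by noting that $\Gamma_h$ carries a genuine flow conjugate to $\theta$, so that two-sided limit sets make sense there.
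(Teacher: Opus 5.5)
Your proposal is correct and follows the same route as the paper, whose proof says only that the conjugacy $\iota$ between $\theta$ and $\Pi|_{\Gamma_h}$ transports the Morse sets and connecting orbits, and that the chain-component order comes from Theorem~\ref{thm:chain-component-correspondence}. Your item-by-item check of compactness, disjointness, invariance, isolation and the limit-set conditions spells out what the paper compresses into that one sentence.
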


\begin{pf}
The restriction $\Pi|_{\Gamma_h}$ is conjugate to $\theta$, and $\Gamma_h$ is the global attractor. Conjugacy transports the Morse sets and connecting orbits. Preservation of the component order follows from Theorem~\ref{thm:chain-component-correspondence}.
\end{pf}

\begin{corollary}
\label{cor:entropy-identity}
Let $T:=\theta_\tau$ and $S:=\Pi_\tau$. Every $S$-invariant Borel probability measure is supported on $\Gamma_h$ and is uniquely of the form $\iota_*\mu$ for a $T$-invariant probability measure $\mu$. Consequently,
\begin{equation}
 h_{\mathrm{top}}(S)=h_{\mathrm{top}}(T).
 \label{eq:entropy-identity}
\end{equation}
\end{corollary}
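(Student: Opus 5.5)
The plan has two parts: first show that every $S$-invariant measure lives on $\Gamma_h$, then transport measures and entropy through the conjugacy $\iota$.

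\textbf{Step 1: support on the graph.} Let $\nu$ be an $S$-invariant Borel probability measure on $Z$, where $S=\Pi_\tau$. By \eqref{eq:one-step-V-contraction}, $V(Sz)\le qV(z)$ for every $z\in Z$. For $r>0$ set $U_r:=\{V\ge r\}$. Iterating the contraction gives $S^{-n}(U_r)\subset\{V\ge q^{-n}r\}$, and this set is empty once $q^{-n}r>M_V:=\max_Z V$. Invariance then yields $\nu(U_r)=\nu(S^{-n}U_r)=0$. Letting $r\downarrow0$ along a sequence gives $\nu(\{V>0\})=0$, so $\nu(\Gamma_h)=1$. Alternatively, one can use Poincaré recurrence, or the fact that the support of an invariant measure of a continuous map on a compact space is contained in the strictly invariant set $\bigcap_n S^n(Z)$. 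By Theorem~\ref{thm:compact-invariant-correspondence} style reasoning that set is $\Gamma_h$, but the direct $V$-argument above is cleaner and avoids it.

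\textbf{Step 2: correspondence of measures.} Put $\mu:=(\pi_\Omega)_*\nu$. Because $\pi_\Omega\circ S=T\circ\pi_\Omega$, the measure $\mu$ is $T$-invariant. Since $\nu$ is concentrated on $\Gamma_h$, and $\iota\circ\pi_\Omega$ is the identity on $\Gamma_h$, we get $\nu=\iota_*\mu$. Uniqueness holds because $\pi_\Omega$ is a left inverse of $\iota$, so $\mu=(\pi_\Omega)_*\iota_*\mu$ is determined by $\nu$. Conversely, every $\iota_*\mu$ with $\mu$ $T$-invariant is $S$-invariant, by the equivariance \eqref{eq:graph-conjugacy}.

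\textbf{Step 3: entropy.} I would apply the variational principle to both $S$ on $Z$ and $T$ on $\Omega$; both are continuous maps of compact metric spaces. By Step 2, $\iota$ is an isomorphism of measure-preserving systems between $(\Omega,T,\mu)$ and $(Z,S,\iota_*\mu)$, with measurable inverse $\pi_\Omega$ on the full-measure set $\Gamma_h$. Hence $h_{\iota_*\mu}(S)=h_\mu(T)$. Taking suprema over the bijectively corresponding sets of invariant measures gives \eqref{eq:entropy-identity}.

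As a cross-check, there is a direct topological argument. The factor map gives $h_{\mathrm{top}}(S)\ge h_{\mathrm{top}}(T)$, and the Bowen inequality with fibers $\pi_\Omega^{-1}(\omega)$ of zero entropy (they are uniformly contracted) gives the reverse inequality. I would nonetheless rely on the variational route.

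\textbf{Main obstacle.} The only point needing care is Step 1: supports must be controlled for arbitrary, possibly non-ergodic, measures. The preimage argument handles this uniformly, since it uses only boundedness of $V$ and the one-step contraction. Everything after that is formal transport through the conjugacy.
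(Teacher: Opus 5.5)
Your proposal is correct and follows the paper's route. The paper likewise obtains support on $\Gamma_h$ from the one-step contraction $V(Sz)\le qV(z)$, transports invariant measures through $\iota$ and $\pi_\Omega$, and then applies the variational principle. The only difference is cosmetic and lies in Step 1: the paper integrates, $\int_Z V\,\dd\nu=\int_Z V\circ S\,\dd\nu\le q\int_Z V\,\dd\nu$, which forces $\int_Z V\,\dd\nu=0$, whereas your superlevel-set preimage argument reaches the same conclusion equally rigorously.
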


\begin{pf}
For an $S$-invariant probability measure $\nu$, \eqref{eq:one-step-V-contraction} and invariance give
\begin{equation}
 \int_ZV\,\dd\nu
 =\int_ZV\circ S\,\dd\nu
 \leq q\int_ZV\,\dd\nu.
 \label{eq:invariant-measure-on-graph}
\end{equation}
Since $0<q<1$ and $V\geq0$, one has $\int V\,\dd\nu=0$, hence $\nu(\Gamma_h)=1$. The graph conjugacy gives the unique representation $\nu=\iota_*\mu$. Measure-theoretic entropy is preserved under this measure conjugacy, and the variational principle \cite{Walters1982} yields \eqref{eq:entropy-identity}. This is the strict-contraction specialization of the broader shrinking-fiber conclusions in \cite{Kloeckner2021}.
\end{pf}

\section{Subadditive fiber contraction}
\label{sec:subadditive-contraction}

Finite-window contraction is the geometric hypothesis used in Sections~\ref{sec:graph} and \ref{sec:recurrent}. We now derive it from the asymptotic growth of the fiber derivative cocycle. The regularity required by the continuous-time subadditive variational theorem is stated explicitly.

\subsection{The fiber derivative-growth cocycle}
\label{subsec:derivative-growth}

For a linear map $A:\R^n\to\R^n$, define the operator norm between moving fiber norms by
\begin{equation}
 \lVert A\rVert_{\omega\to\eta}
 :=\sup_{v\neq0}\frac{\lvert Av\rvert_\eta}{\lvert v\rvert_\omega}.
 \label{eq:moving-operator-norm}
\end{equation}
The regularity assumptions on $F$ imply that $x\mapsto\varphi(t,\omega,x)$ is continuously differentiable. Set
\begin{equation}
 \Lambda_t(\omega)
 :=\sup_{x\in K}
 \left\lVert D_x\varphi(t,\omega,x)
 \right\rVert_{\omega\to\theta_t\omega},
 \qquad
 \kappa_t(\omega):=\log\Lambda_t(\omega).
 \label{eq:fiber-growth-cocycle}
\end{equation}

\begin{lemma}
\label{lem:joint-continuity-growth}
For every $T>0$, the map
\begin{equation}
 (t,\omega,x)
 \longmapsto D_x\varphi(t,\omega,x)
 \label{eq:joint-derivative-flow-map}
\end{equation}
is continuous on $[0,T]\times\Omega\times K$. Consequently,
\begin{equation}
 (t,\omega)\longmapsto\Lambda_t(\omega)
 \quad\text{and}\quad
 (t,\omega)\longmapsto\kappa_t(\omega)
 \label{eq:joint-continuity-Lambda-kappa}
\end{equation}
are continuous on $[0,T]\times\Omega$. Moreover, $\Lambda_t(\omega)>0$ for all $t\geq0$ and $\omega\in\Omega$.
\end{lemma}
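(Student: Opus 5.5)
The plan is to derive joint continuity of the derivative map from the variational equation along solutions, and then deduce the properties of \(\Lambda_t\) and \(\kappa_t\) by the maximum theorem.

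\emph{Step 1: continuity of the derivative map.} For fixed \((\omega,x)\in\Omega\times K\), the matrix \(Y(t)=D_x\varphi(t,\omega,x)\) solves the linear variational equation
\[
\dot Y=D_xF\bigl(\theta_t\omega,\varphi(t,\omega,x)\bigr)Y,\qquad Y(0)=I.
\]
By Assumption~\ref{ass:well-posedness}, \(\varphi\) is continuous and takes values in \(K\). The base flow \(\theta\) is continuous, and \(D_xF\) is continuous on \(\Omega\times U\). Hence the coefficient \(A(t,\omega,x):=D_xF(\theta_t\omega,\varphi(t,\omega,x))\) is continuous on the compact set \([0,T]\times\Omega\times K\), and therefore bounded there by some \(M\).

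Write \(Y_{\omega,x}\) for the solution with parameters \((\omega,x)\). Gronwall's inequality gives \(\|Y_{\omega,x}(t)\|\le \e^{MT}\) on \([0,T]\). Subtracting the integral equations for two parameter values yields
\[
\|Y_{\omega,x}(t)-Y_{\omega',x'}(t)\|
\le \e^{MT}\int_0^T\|A(s,\omega,x)-A(s,\omega',x')\|\,\e^{MT}\,\dd s
+M\int_0^t\|Y_{\omega,x}(s)-Y_{\omega',x'}(s)\|\,\dd s .
\]
Uniform continuity of \(A\) on the compact set makes the first term small, uniformly in \(t\), once \((\omega',x')\) is close to \((\omega,x)\). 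Gronwall then gives \(\sup_{t\le T}\|Y_{\omega,x}(t)-Y_{\omega',x'}(t)\|\to0\). This uniform convergence, together with continuity of each \(Y_{\omega,x}\) in \(t\), gives joint continuity in \((t,\omega,x)\).

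\emph{Step 2: continuity of \(\Lambda_t\).} The moving operator norm equals
\[
\|A\|_{\omega\to\eta}=\bigl\|P(\eta)^{1/2}AP(\omega)^{-1/2}\bigr\|_2 .
\]
The map \(\omega\mapsto P(\omega)^{1/2}\) is continuous: the matrix square root is continuous on positive definite matrices, and \(P\) is continuous with uniform bounds \eqref{eq:P-uniform-bounds}. The inverse \(P(\omega)^{-1/2}\) is continuous for the same reason.

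It follows that \(g(t,\omega,x):=\|D_x\varphi(t,\omega,x)\|_{\omega\to\theta_t\omega}\) is continuous on \([0,T]\times\Omega\times K\). Since \(K\) is compact, the maximum theorem (equivalently, uniform continuity of \(g\) on a compact set) shows that \(\Lambda_t(\omega)=\max_{x\in K}g(t,\omega,x)\) is continuous in \((t,\omega)\).

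\emph{Step 3: positivity and continuity of \(\kappa_t\).} By Liouville's formula, \(\det Y(t)=\exp\int_0^t\operatorname{tr}A\,\dd s\neq0\). So \(D_x\varphi\) is invertible, its norm is positive, and \(\Lambda_t(\omega)>0\) for every \(t\ge0\) and \(\omega\in\Omega\). Continuity of \(\kappa_t=\log\Lambda_t\) then follows from continuity of \(\log\) on \((0,\infty)\).

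The main point of care is Step 1: one must rely on the standing regularity of \(D_xF\) only through continuity on the compact set \(\Omega\times K\). This is enough because \(\varphi\) stays in \(K\), so no estimates outside \(K\) are needed.
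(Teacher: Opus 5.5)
Your proposal is correct and follows the paper's proof: the variational equation with continuous dependence for linear ODEs gives joint continuity of $D_x\varphi$, continuity of the moving operator norm (your formula $\lVert P(\eta)^{1/2}AP(\omega)^{-1/2}\rVert_2$) plus the maximum theorem gives continuity of $\Lambda_t$, and nonsingularity of the fundamental matrix gives $\Lambda_t(\omega)>0$. You simply make explicit, via Gronwall and Liouville's formula, the steps the paper only cites; the extra factor $\e^{MT}$ inside the integral in your difference inequality is redundant but harmless.
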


\begin{pf}
For fixed $(\omega,x)$, the derivative matrix
\begin{equation}
 X(t,\omega,x):=D_x\varphi(t,\omega,x)
 \label{eq:variational-fundamental-matrix}
\end{equation}
solves
\begin{equation}
 \dot X(t)
 =D_xF\bigl(\theta_t\omega,\varphi(t,\omega,x)\bigr)X(t),
 \qquad X(0)=I.
 \label{eq:variational-equation-joint}
\end{equation}
The coefficient matrix is jointly continuous in $(t,\omega,x)$ on every compact time interval because $D_xF$, $\theta$, and $\varphi$ are continuous. Continuous dependence for linear differential equations therefore gives joint continuity of $X$ in \eqref{eq:joint-derivative-flow-map}.

The moving operator norm is continuous in the matrix and in the two base points because $P$ is continuous and uniformly positive definite. Hence
\begin{equation}
 (t,\omega,x)
 \longmapsto
 \left\lVert D_x\varphi(t,\omega,x)
 \right\rVert_{\omega\to\theta_t\omega}
 \label{eq:joint-continuous-integrand-Lambda}
\end{equation}
is continuous on $[0,T]\times\Omega\times K$. Since $K$ is compact, the maximum theorem yields joint continuity of $\Lambda_t(\omega)$. The fundamental matrix of \eqref{eq:variational-equation-joint} is nonsingular, so its operator norm is positive; therefore $\Lambda_t(\omega)>0$. Continuity of the logarithm then gives joint continuity of $\kappa_t(\omega)$.
\end{pf}

\begin{lemma}
\label{lem:subadditive-growth}
For all $s,t\geq0$ and $\omega\in\Omega$,
\begin{equation}
 \kappa_{t+s}(\omega)
 \leq\kappa_s(\omega)+\kappa_t(\theta_s\omega).
 \label{eq:kappa-subadditivity}
\end{equation}
Moreover, for every $x,y\in K$,
\begin{equation}
 \left\lvert
 \varphi(t,\omega,x)-\varphi(t,\omega,y)
 \right\rvert_{\theta_t\omega}
 \leq\exp\bigl(\kappa_t(\omega)\bigr)
 \lvert x-y\rvert_\omega.
 \label{eq:kappa-nonlinear-estimate}
\end{equation}
\end{lemma}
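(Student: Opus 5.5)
The plan is to prove the subadditivity inequality through the chain rule for the fiber derivative, and then obtain the nonlinear estimate from a mean-value argument along segments, using convexity of $K$.

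\emph{Subadditivity.} Fix $s,t\geq0$, $\omega\in\Omega$ and $x\in K$. Differentiating the cocycle identity \eqref{eq:cocycle-identity} with respect to $x$ gives
\[
 D_x\varphi(t+s,\omega,x)
 =D_x\varphi\bigl(t,\theta_s\omega,\varphi(s,\omega,x)\bigr)\,D_x\varphi(s,\omega,x).
\]
The moving operator norm \eqref{eq:moving-operator-norm} is submultiplicative along the chain of norms $\omega\to\theta_s\omega\to\theta_{t+s}\omega$:
\[
 \lVert AB\rVert_{\omega\to\zeta}\leq\lVert A\rVert_{\eta\to\zeta}\lVert B\rVert_{\omega\to\eta}.
\]
This follows directly from the definition as a supremum of ratios. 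By positive invariance, $\varphi(s,\omega,x)\in K$. Hence the first factor is bounded by $\Lambda_t(\theta_s\omega)$ and the second by $\Lambda_s(\omega)$. Taking the supremum over $x\in K$ gives $\Lambda_{t+s}(\omega)\leq\Lambda_s(\omega)\Lambda_t(\theta_s\omega)$. Taking logarithms, which is legitimate because $\Lambda>0$ by Lemma~\ref{lem:joint-continuity-growth}, yields \eqref{eq:kappa-subadditivity}.

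\emph{Nonlinear estimate.} Fix $x,y\in K$. Convexity of $K$ puts the segment $x+\sigma(y-x)$, $\sigma\in[0,1]$, inside $K$. Since $x\mapsto\varphi(t,\omega,x)$ is $C^1$ on a neighborhood of $K$, the fundamental theorem of calculus gives
\[
 \varphi(t,\omega,y)-\varphi(t,\omega,x)=\int_0^1D_x\varphi\bigl(t,\omega,x+\sigma(y-x)\bigr)(y-x)\,\dd\sigma.
\]
Take the norm $\lvert\cdot\rvert_{\theta_t\omega}$ and move it inside the integral. Each integrand is then bounded by $\Lambda_t(\omega)\lvert y-x\rvert_\omega$, which gives \eqref{eq:kappa-nonlinear-estimate} since $\Lambda_t(\omega)=\exp(\kappa_t(\omega))$.

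\emph{Main subtlety.} The delicate point is that the derivative formula requires differentiability of $\varphi$ at points of $K$, including boundary points, and that solutions from nearby initial states must exist on $[0,t]$. This is covered by the standing regularity of $F$ on $\Omega\times U$ and by the $C^1$ dependence already invoked in Lemma~\ref{lem:joint-continuity-growth}. The integral argument only evaluates $D_x\varphi$ at points of $K$, so no extension beyond $K$ is needed in the estimate itself.
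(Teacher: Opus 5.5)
Your proposal is correct and follows essentially the same route as the paper: the chain rule applied to the cocycle identity, positive invariance of $K$, and submultiplicativity of the moving operator norms give $\Lambda_{t+s}(\omega)\leq\Lambda_t(\theta_s\omega)\Lambda_s(\omega)$, and the mean-value integral along segments in the convex set $K$ gives the nonlinear estimate. Your added remarks, namely positivity of $\Lambda_t$ before taking logarithms and differentiability of $\varphi$ at boundary points of $K$, are correct; the paper uses both points implicitly.
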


\begin{pf}
Differentiating the cocycle identity with respect to the initial fiber state gives
\begin{equation}
 D_x\varphi(t+s,\omega,x)
 =D_x\varphi\bigl(t,\theta_s\omega,\varphi(s,\omega,x)\bigr)D_x\varphi(s,\omega,x).
 \label{eq:derivative-cocycle}
\end{equation}
The intermediate state belongs to $K$ by positive invariance. Submultiplicativity of the moving operator norms and the supremum over $x\in K$ give
\begin{equation}
 \Lambda_{t+s}(\omega)
 \leq\Lambda_t(\theta_s\omega)\Lambda_s(\omega),
 \label{eq:Lambda-submultiplicativity}
\end{equation}
which is equivalent to \eqref{eq:kappa-subadditivity}. Since $K$ is convex, the segment $y+r(x-y)$, $0\leq r\leq1$, lies in $K$. The mean-value integral formula gives
\begin{equation}
 \varphi(t,\omega,x)-\varphi(t,\omega,y)
 =\int_0^1D_x\varphi\bigl(t,\omega,y+r(x-y)\bigr)(x-y)\,\dd r.
 \label{eq:flow-mean-value-formula}
\end{equation}
Taking the moving norm and using \eqref{eq:fiber-growth-cocycle} proves \eqref{eq:kappa-nonlinear-estimate}.
\end{pf}

\subsection{Invariant-measure exponents and the semi-uniform formula}
\label{subsec:subadditive-semiuniform}

Let $\Mcal_\theta(\Omega)$ and $\Mcal_\theta^{\mathrm e}(\Omega)$ denote, respectively, the invariant and ergodic invariant Borel probability measures of the base flow. For $\mu\in\Mcal_\theta(\Omega)$, invariance and subadditivity imply that
\begin{equation}
 b_\mu(t):=\int_\Omega\kappa_t(\omega)\,\dd\mu(\omega)
 \label{eq:integrated-subadditive-function}
\end{equation}
is subadditive in $t$. Hence the limit
\begin{equation}
 \chi(\mu)
 :=\lim_{t\to\infty}\frac{1}{t}b_\mu(t)
 =\inf_{t>0}\frac{1}{t}
 \int_\Omega\kappa_t(\omega)\,\dd\mu(\omega)
 \label{eq:measure-fiber-exponent}
\end{equation}
exists. Define
\begin{equation}
 \chi_*
 :=\sup_{\mu\in\Mcal_\theta(\Omega)}\chi(\mu).
 \label{eq:maximal-fiber-exponent}
\end{equation}

\begin{lemma}
\label{lem:subadditive-semiuniform}
For the jointly continuous subadditive family $\{\kappa_t\}_{t\geq0}$,
\begin{equation}
 \chi_*
 =\sup_{\mu\in\Mcal_\theta^{\mathrm e}(\Omega)}\chi(\mu)
  =\inf_{t>0}\frac{1}{t}
 \sup_{\omega\in\Omega}\kappa_t(\omega)
 =\lim_{t\to\infty}\frac{1}{t}
 \sup_{\omega\in\Omega}\kappa_t(\omega).
 \label{eq:schreiber-variational-growth}
\end{equation}
Consequently, if $\chi_*<0$, then there exist $\tau>0$ and $\gamma>0$ such that
\begin{equation}
 \kappa_\tau(\omega)\leq-\gamma,
 \qquad \omega\in\Omega.
 \label{eq:subadditive-uniform-window}
\end{equation}
\end{lemma}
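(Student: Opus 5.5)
The plan is to derive the lemma from Schreiber's continuous-time semi-uniform subadditive theorem, checking the needed regularity directly. Write $a:=\inf_{t>0}\frac1t\sup_\omega\kappa_t(\omega)$.

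The first step is to show that $\sup_\omega\kappa_t(\omega)$ is subadditive in $t$. This follows from \eqref{eq:kappa-subadditivity} by taking suprema. By Lemma~\ref{lem:joint-continuity-growth}, $\kappa$ is bounded on $[0,1]\times\Omega$, so Fekete's lemma in continuous time gives $a=\lim_{t\to\infty}\frac1t\sup_\omega\kappa_t$. This is the last equality in \eqref{eq:schreiber-variational-growth}.

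The easy inequality $\chi(\mu)\le a$ comes straight from \eqref{eq:measure-fiber-exponent}: $\frac1t\int\kappa_t\,\dd\mu\le\frac1t\sup\kappa_t$ for every $t$. This gives $\sup_{\mathrm e}\chi\le\chi_*\le a$.

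For the reverse inequality $a\le\sup_{\mu\text{ ergodic}}\chi(\mu)$ I would invoke Schreiber's theorem \cite{Schreiber1998}. Its hypotheses are continuity of each $\kappa_t$, subadditivity, and a uniform bound on $\kappa$ for $t\in[0,1]$; these are supplied by Lemmas~\ref{lem:joint-continuity-growth} and~\ref{lem:subadditive-growth}. If a self-contained route is preferred, I would pass to the time-one map and sample $\kappa$ at integer times, using $\kappa_{n+s}\le\kappa_n+\kappa_s\circ\theta_n$ with $\kappa_s$ bounded for $s\in[0,1]$. Then I would take $\omega_n$ with $\kappa_n(\omega_n)\ge n(a-\varepsilon)$ and build empirical measures $\frac1n\sum_{k<n}\delta_{\theta_k\omega_n}$, spread over $[0,n]$ in continuous time. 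A weak-* limit point $\mu$ is $\theta$-invariant. The standard block decomposition, splitting $[0,n]$ into blocks of length $m$ over the $m$ residue offsets and applying subadditivity, gives $\frac1m\int\kappa_m\,\dd\mu\ge a-\varepsilon-O(1/m)$. Continuity of $\kappa_m$ is what lets the weak-* limit pass through. Hence $\chi(\mu)\ge a-\varepsilon$. Ergodic decomposition, together with affinity and upper semicontinuity of $\mu\mapsto\chi(\mu)$ (an infimum of continuous affine functionals), then gives an ergodic measure with $\chi\ge a-\varepsilon$.

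This block-decomposition step is the main obstacle. The real difficulty is controlling the boundary terms uniformly via the bound on $\kappa$ over $[0,m]\times\Omega$. That is exactly why the joint continuity lemma was proved beforehand.

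The consequence is then immediate. If $\chi_*<0$, then $a<0$, so there is some $\tau>0$ with $\frac1\tau\sup_\omega\kappa_\tau(\omega)<a/2$. Setting $\gamma:=-\tau a/2>0$ yields \eqref{eq:subadditive-uniform-window}.
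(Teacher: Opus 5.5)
Your proposal is correct and follows essentially the paper's own argument. Both use continuous-time Fekete for the last equality, the trivial bound $\chi(\mu)\le\inf_{t>0}\frac1t\sup_\omega\kappa_t(\omega)$ together with Schreiber's theorem (and $\Mcal_\theta^{\mathrm e}(\Omega)\subset\Mcal_\theta(\Omega)$) for the variational identity, and a window $\tau$ with $\sup_\omega\kappa_\tau(\omega)<0$ for the consequence. Your optional occupation-measure and block-decomposition sketch is just a standard proof of the cited theorem and is not needed here.
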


\begin{pf}
Schreiber's continuous-time subadditive variational theorem \cite[Theorem~1]{Schreiber1998}, applied using Lemma~\ref{lem:joint-continuity-growth}, gives
\begin{equation}
 r
 :=\sup_{\mu\in\Mcal_\theta^{\mathrm e}(\Omega)}\chi(\mu)
 =\inf_{t>0}\frac{1}{t}
 \sup_{\omega\in\Omega}\kappa_t(\omega).
 \label{eq:schreiber-ergodic-value}
\end{equation}
For an arbitrary invariant probability measure $\mu$,
\begin{equation}
 \chi(\mu)
 =\inf_{t>0}\frac{1}{t}\int_\Omega\kappa_t(\omega)\,\dd\mu(\omega)
 \leq\inf_{t>0}\frac{1}{t}\sup_{\omega\in\Omega}\kappa_t(\omega)
 =r.
 \label{eq:all-invariant-measures-upper-bound}
\end{equation}
Hence $\sup_{\Mcal_\theta(\Omega)}\chi\leq r$. The reverse inequality is immediate from
\begin{equation}
 \Mcal_\theta^{\mathrm e}(\Omega)
 \subset\Mcal_\theta(\Omega),
 \label{eq:ergodic-subset-invariant}
\end{equation}
so the supremum over all invariant measures equals the ergodic supremum without any additional interchange of ergodic decomposition and subadditive limits.

Finally, the function
\begin{equation}
 a(t):=\sup_{\omega\in\Omega}\kappa_t(\omega)
 \label{eq:uniform-growth-function}
\end{equation}
is subadditive and locally bounded by joint continuity. The continuous-time Fekete lemma gives
\begin{equation}
 \lim_{t\to\infty}\frac{a(t)}{t}
 =\inf_{t>0}\frac{a(t)}{t},
 \label{eq:continuous-fekete}
\end{equation}
which proves the last equality in \eqref{eq:schreiber-variational-growth}. If $\chi_*<0$, choose $\tau>0$ such that
\begin{equation}
 \frac{1}{\tau}\sup_{\omega\in\Omega}\kappa_\tau(\omega)<0.
 \label{eq:negative-uniform-growth-time}
\end{equation}
Setting $\gamma:=-\sup_\omega\kappa_\tau(\omega)>0$ proves \eqref{eq:subadditive-uniform-window}.
\end{pf}

\begin{theorem}
	\label{thm:subadditive-main}
	Suppose that Assumption~\ref{ass:well-posedness} holds and that
	\begin{equation}
		\chi_*<0.
		\label{eq:negative-maximal-fiber-exponent}
	\end{equation}
	Then there exist \(\tau>0\) and \(q\in(0,1)\) such that the cocycle is
	uniformly contracting over the window \(\tau\). Consequently:
	\begin{enumerate}
		\item there exists a unique continuous invariant graph \(\Gamma_h\), and
		every fiber trajectory tracks the corresponding graph trajectory
		exponentially;
		
		\item the ambient chain-attainability relation is given by
		Corollary~\ref{cor:controlled-chain-reduction}; in particular,
		\eqref{eq:CR-identity} holds and the chain-component order is transported
		isomorphically from the base;
		
		\item compact strictly invariant sets and attractors are graph lifts, the
		basin identity \eqref{eq:basin-identity} holds, standard dual repellers
		are full inverse images under \(\pi_\Omega\), and their largest compact
		strictly invariant cores are graph lifts;
		
		\item the perturbation and practical-confinement conclusions of
		Section~\ref{sec:robustness} apply whenever their additional hypotheses
		are satisfied.
	\end{enumerate}
	Moreover, the invariant-graph conjugacy and the shrinking-fiber structure
	yield the Morse-lifting and entropy consequences stated in
	Corollaries~\ref{cor:morse-lifting} and
	\ref{cor:entropy-identity}, respectively.
\end{theorem}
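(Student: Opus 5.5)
The proof is essentially an assembly of earlier results, and the only genuinely new step is producing the finite window. Assuming $\chi_*<0$, I will invoke Lemma~\ref{lem:subadditive-semiuniform}. It supplies $\tau>0$ and $\gamma>0$ with $\kappa_\tau(\omega)\leq-\gamma$ for all $\omega\in\Omega$. Setting $q:=\e^{-\gamma}\in(0,1)$, the nonlinear estimate \eqref{eq:kappa-nonlinear-estimate} of Lemma~\ref{lem:subadditive-growth} at $t=\tau$ gives
\[
\lvert\varphi(\tau,\omega,x)-\varphi(\tau,\omega,y)\rvert_{\theta_\tau\omega}\leq \e^{\kappa_\tau(\omega)}\lvert x-y\rvert_\omega\leq q\lvert x-y\rvert_\omega
\]
for all $x,y\in K$. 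This is exactly Definition~\ref{def:block-contraction}. The estimate depends on convexity of $K$, through the mean-value formula, and on positive invariance of $K$, both already available from the standing hypotheses. I will also record that the constant $L_\tau$ of \eqref{eq:intermediate-lipschitz} can be taken as $\max_{0\le t\le\tau}\sup_\omega \e^{\kappa_t(\omega)}$, which is finite by the joint continuity in Lemma~\ref{lem:joint-continuity-growth}. This justifies the constants entering Theorem~\ref{thm:exponential-tracking}.

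With finite-window contraction established, I will derive the numbered consequences directly from earlier results.
\begin{enumerate}
\item Theorem~\ref{thm:invariant-graph} gives the unique continuous invariant graph, and Theorem~\ref{thm:exponential-tracking} gives the exponential tracking.
\item Corollary~\ref{cor:controlled-chain-reduction} gives the chain relation. Theorem~\ref{thm:chain-recurrence-identity} gives \eqref{eq:CR-identity}, and Theorem~\ref{thm:chain-component-correspondence} gives the order transport.
\item Theorem~\ref{thm:compact-invariant-correspondence} handles compact strictly invariant sets. Theorem~\ref{thm:attractor-correspondence} handles attractors and the basin identity. Theorem~\ref{thm:attractor-repeller-correspondence} handles the full-fiber dual repellers and their strict cores.
\item Item four is conditional: the results of Section~\ref{sec:robustness} have finite-window contraction as their base hypothesis, which has now been verified.
\end{enumerate}
The Morse and entropy statements follow from Corollaries~\ref{cor:morse-lifting} and \ref{cor:entropy-identity}, applied with this $\tau$.

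The main point needing care is the semi-uniform step. It requires that Schreiber's theorem apply to the family $\kappa_t$, which needs joint continuity and subadditivity, and that $\kappa_t$ be finite, which needs $\Lambda_t>0$. Both are supplied by Lemmas~\ref{lem:joint-continuity-growth} and \ref{lem:subadditive-growth}. This step is already encapsulated in Lemma~\ref{lem:subadditive-semiuniform}. What remains to check is that the window estimate holds for all pairs in $K$, not merely infinitesimally, and the convex mean-value argument secures this.
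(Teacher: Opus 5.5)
Your proposal is correct and follows the paper's proof essentially step for step. Lemma~\ref{lem:subadditive-semiuniform} yields the uniform window $\kappa_\tau\le-\gamma$, Lemma~\ref{lem:subadditive-growth} turns this into block contraction with $q=\e^{-\gamma}$, and the listed consequences are then read off from Sections~\ref{sec:graph}, \ref{sec:recurrent}, \ref{sec:robustness} and the two corollaries. Your remark that $L_\tau$ can be taken as $\max_{0\le t\le\tau}\sup_\omega \e^{\kappa_t(\omega)}$ is a valid, slightly more explicit justification of a constant that the paper merely asserts.
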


\begin{pf}
	By Lemma~\ref{lem:subadditive-semiuniform}, there exist \(\tau>0\) and
	\(\gamma>0\) such that \eqref{eq:subadditive-uniform-window} holds.
	Lemma~\ref{lem:subadditive-growth} then gives
	\begin{equation}
		\left\lvert
		\varphi(\tau,\omega,x)-\varphi(\tau,\omega,y)
		\right\rvert_{\theta_\tau\omega}
		\leq
		\e^{-\gamma}\lvert x-y\rvert_\omega.
		\label{eq:subadditive-implies-block}
	\end{equation}
	Hence Definition~\ref{def:block-contraction} is satisfied with
	\(q=\e^{-\gamma}\in(0,1)\).
	
	Item~(1) follows from the invariant-graph and exponential-tracking results
	of Section~\ref{sec:graph}. Items~(2) and~(3) follow from the chain,
	invariant-set, and attractor--repeller results of Section~\ref{sec:recurrent}.
	Item~(4) follows from Section~\ref{sec:robustness} once the corresponding
	additional perturbation hypotheses are imposed. The final assertion is
	precisely the content of Corollaries~\ref{cor:morse-lifting} and
	\ref{cor:entropy-identity}.
\end{pf}

\begin{remark}
\label{rem:subadditive-strength}
Theorem~\ref{thm:subadditive-main} does not require an additive pointwise upper bound for $\kappa_t$. It applies whenever the nonlinear fiber derivative cocycle has negative maximal invariant-measure exponent, even if no useful instantaneous logarithmic-norm estimate is available. The moving-metric condition in the next section is a computable sufficient certificate for this more intrinsic hypothesis.
\end{remark}

\section{Averaged moving-metric certificates}
\label{sec:averaged-contraction}

We next derive a verifiable differential sufficient condition for Theorem~\ref{thm:subadditive-main}. The argument is aligned with classical contraction analysis, differential Lyapunov methods, and control contraction metrics \cite{LohmillerSlotine1998,ForniSepulchre2014,ManchesterSlotine2017}. The moving logarithmic norm may be positive on subsets of the base, but its average must be negative for every invariant probability measure.

\subsection{An additive semi-uniform averaging lemma}
\label{subsec:semiuniform}

Denote by \(\Mcal_\theta(\Omega)\) the compact convex set of \(\theta\)-invariant Borel probability measures on \(\Omega\).

\begin{lemma}
\label{lem:semiuniform-negativity}
Let \(a\in C(\Omega,\R)\) and define
\begin{equation}
 \beta(a)
 :=\sup_{\mu\in\Mcal_\theta(\Omega)}
 \int_\Omega a(\omega)\,\dd\mu(\omega).
 \label{eq:maximal-ergodic-average}
\end{equation}
If \(\beta(a)<0\), then there exist \(\tau>0\) and \(\gamma>0\) such that
\begin{equation}
 \int_0^\tau a(\theta_s\omega)\,\dd s
 \leq-\gamma
 \label{eq:uniform-negative-window}
\end{equation}
for every \(\omega\in\Omega\). More precisely, for every \(\varepsilon>0\) there exists \(T_\varepsilon>0\) such that
\begin{equation}
 \sup_{\omega\in\Omega}
 \frac{1}{T}\int_0^T a(\theta_s\omega)\,\dd s
 \leq\beta(a)+\varepsilon,
 \qquad T\geq T_\varepsilon.
 \label{eq:semiuniform-average-bound}
\end{equation}
\end{lemma}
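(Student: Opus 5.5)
The plan is to prove the quantitative bound \eqref{eq:semiuniform-average-bound} by contradiction, using a Krylov--Bogolyubov construction. The first claim \eqref{eq:uniform-negative-window} then follows at once. Take $\varepsilon:=-\beta(a)/2>0$ and $\tau:=T_\varepsilon$. Then
\[
\int_0^\tau a(\theta_s\omega)\,\dd s\leq \tau\,\beta(a)/2
\]
for every $\omega$, so $\gamma:=-\tau\beta(a)/2>0$ works. An alternative would be to apply Lemma~\ref{lem:subadditive-semiuniform} to the additive family $\kappa_t(\omega):=\int_0^t a(\theta_s\omega)\,\dd s$. This family is jointly continuous and additive, hence subadditive, and Birkhoff/Fubini gives $\chi(\mu)=\int a\,\dd\mu$. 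I prefer to give the direct argument, since it is short and self-contained.

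Suppose \eqref{eq:semiuniform-average-bound} fails for some $\varepsilon>0$. Then there are times $T_k\to\infty$ and points $\omega_k\in\Omega$ with
\[
\frac{1}{T_k}\int_0^{T_k}a(\theta_s\omega_k)\,\dd s>\beta(a)+\varepsilon .
\]
Define the empirical probability measures
\[
\mu_k:=\frac{1}{T_k}\int_0^{T_k}\delta_{\theta_s\omega_k}\,\dd s
\]
on $\Omega$. These are well defined as Borel probability measures because $s\mapsto\theta_s\omega_k$ is continuous. By compactness of $\Omega$ and the Riesz/Banach--Alaoglu theorem, a subsequence converges weak-$*$ to some probability measure $\mu$. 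Since $a$ is continuous, $\int a\,\dd\mu=\lim\int a\,\dd\mu_k\geq\beta(a)+\varepsilon$.

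The key step is to show that $\mu$ is $\theta$-invariant. Fix $t\in\R$ and $g\in C(\Omega,\R)$. A change of variables gives
\[
\Bigl|\int g\circ\theta_t\,\dd\mu_k-\int g\,\dd\mu_k\Bigr|
=\frac{1}{T_k}\Bigl|\int_{T_k}^{T_k+t}g(\theta_s\omega_k)\,\dd s-\int_0^{t}g(\theta_s\omega_k)\,\dd s\Bigr|
\leq\frac{2|t|\,\lVert g\rVert_\infty}{T_k}\to0 .
\]
Since $g\circ\theta_t$ is continuous, passing to the limit yields $\int g\circ\theta_t\,\dd\mu=\int g\,\dd\mu$ for all continuous $g$. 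Hence $(\theta_t)_*\mu=\mu$, so $\mu\in\Mcal_\theta(\Omega)$. This gives $\int a\,\dd\mu\leq\beta(a)$, which contradicts $\int a\,\dd\mu\geq\beta(a)+\varepsilon$.

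I expect the only delicate point to be this invariance of the weak-$*$ limit. It needs $T_k\to\infty$, which is why the contradiction hypothesis must be set up with an unbounded sequence of times, and it needs the test functions $g\circ\theta_t$ to be continuous, which holds because $\theta_t$ is continuous. The remaining steps are routine.
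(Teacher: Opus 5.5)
Your proposal is correct and follows the paper's own proof: a contradiction argument with occupation measures along times $T_k\to\infty$, a weak-$*$ limit that is $\theta$-invariant and contradicts the definition of $\beta(a)$, and then the choice $\varepsilon=-\beta(a)/2$ to get the uniform window. You also write out the change-of-variables estimate for invariance of the limit, which the paper only asserts.
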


\begin{pf}
Suppose that \eqref{eq:semiuniform-average-bound} fails for some \(\varepsilon>0\). Then there exist \(T_j\to\infty\) and \(\omega_j\in\Omega\) such that
\begin{equation}
 \frac{1}{T_j}\int_0^{T_j}a(\theta_s\omega_j)\,\dd s
 >\beta(a)+\varepsilon.
 \label{eq:semiuniform-contradiction-sequence}
\end{equation}
The occupation measures
\begin{equation}
 \mu_j:=\frac{1}{T_j}\int_0^{T_j}\delta_{\theta_s\omega_j}\,\dd s
 \label{eq:occupation-measures}
\end{equation}
possess a weakly convergent subsequence. Its limit is \(\theta\)-invariant, while \eqref{eq:semiuniform-contradiction-sequence} contradicts the definition of \(\beta(a)\). Choosing \(\varepsilon=-\beta(a)/2\) gives \eqref{eq:uniform-negative-window}.
\end{pf}

\subsection{Moving logarithmic norm}
\label{subsec:moving-lognorm}

Assume that \(P\) is differentiable along the base flow and that
\begin{equation}
 \dot P(\omega)
 :=\left.\frac{\dd}{\dd t}P(\theta_t\omega)\right|_{t=0}
 \label{eq:P-derivative-along-base}
\end{equation}
exists and depends continuously on \(\omega\). Define
\begin{equation}
 \mathcal{S}_P(\omega,x)
 :=D_xF(\omega,x)^\top P(\omega)
 +P(\omega)D_xF(\omega,x)
 +\dot P(\omega)
 \label{eq:moving-symmetric-part}
\end{equation}
and
\begin{equation}
 \nu_P(\omega)
 :=\frac{1}{2}\sup_{x\in K}
 \lambdaMax\!\left(
 P(\omega)^{-1/2}\mathcal{S}_P(\omega,x)P(\omega)^{-1/2}
 \right).
 \label{eq:moving-logarithmic-norm}
\end{equation}

\begin{proposition}
\label{prop:differential-contraction}
For all \(t\geq0\), \(\omega\in\Omega\), and \(x,y\in K\),
\begin{equation}
 \left\lvert
 \varphi(t,\omega,x)-\varphi(t,\omega,y)
 \right\rvert_{\theta_t\omega}
 \leq
 \exp\!\left(\int_0^t\nu_P(\theta_s\omega)\,\dd s\right)
 \lvert x-y\rvert_\omega.
 \label{eq:differential-incremental-estimate}
\end{equation}
Consequently,
\begin{equation}
 \kappa_t(\omega)
 \leq\int_0^t\nu_P(\theta_s\omega)\,\dd s.
 \label{eq:kappa-lognorm-bound}
\end{equation}
\end{proposition}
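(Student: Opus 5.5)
The plan is to prove the linear (derivative) estimate first and then pass to the nonlinear increment through convexity of \(K\), exactly as in Lemma~\ref{lem:subadditive-growth}.

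Fix \(\omega\in\Omega\), \(x\in K\) and \(v\in\R^n\). Set
\[
X(t):=D_x\varphi(t,\omega,x),\qquad
u(t):=X(t)v,\qquad
g(t):=u(t)^\top P(\theta_t\omega)u(t)=\lvert u(t)\rvert_{\theta_t\omega}^2 .
\]
By the variational equation \eqref{eq:variational-equation-joint}, we have \(\dot u=D_xF(\theta_t\omega,x_t)u\) with \(x_t=\varphi(t,\omega,x)\in K\). By the assumed differentiability of \(P\) along the base flow and the flow property, \(\frac{\dd}{\dd t}P(\theta_t\omega)=\dot P(\theta_t\omega)\). Differentiating \(g\) therefore gives
\[
\dot g(t)=u^\top\mathcal S_P(\theta_t\omega,x_t)u .
\]
Substituting \(w=P^{1/2}u\), so that \(g=\lvert w\rvert^2\), we get
\[
u^\top\mathcal S_Pu
= w^\top P^{-1/2}\mathcal S_PP^{-1/2}w
\le \lambdaMax\bigl(P^{-1/2}\mathcal S_PP^{-1/2}\bigr)\,g
\le 2\nu_P(\theta_t\omega)\,g(t).
\]
The last inequality uses \(x_t\in K\) and the supremum in \eqref{eq:moving-logarithmic-norm}.

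Since \(\nu_P\) is continuous (or at least bounded and measurable) along the orbit, Grönwall's inequality yields
\[
g(t)\le \exp\Bigl(2\int_0^t\nu_P(\theta_s\omega)\,\dd s\Bigr)\lvert v\rvert_\omega^2 .
\]
Hence
\[
\lVert D_x\varphi(t,\omega,x)\rVert_{\omega\to\theta_t\omega}\le \exp\Bigl(\int_0^t\nu_P(\theta_s\omega)\,\dd s\Bigr),
\]
uniformly in \(x\in K\). Taking the supremum over \(x\in K\) and then logarithms gives \eqref{eq:kappa-lognorm-bound}. The incremental estimate \eqref{eq:differential-incremental-estimate} then follows from \eqref{eq:kappa-nonlinear-estimate} in Lemma~\ref{lem:subadditive-growth}. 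Equivalently, one can integrate the derivative bound along the segment \(y+r(x-y)\subset K\) using the mean-value formula \eqref{eq:flow-mean-value-formula}.

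The main obstacle is regularity rather than computation. First, \(t\mapsto P(\theta_t\omega)\) must be differentiable with derivative \(\dot P(\theta_t\omega)\). This follows from the hypothesis at the single point \(\theta_t\omega\) together with \(\theta_{t+h}\omega=\theta_h(\theta_t\omega)\). Second, \(\nu_P\) must be finite and integrable along orbits. Finiteness follows from the continuity of \(D_xF\), \(P\), \(P^{-1/2}\) and \(\dot P\) on the compact set \(\Omega\times K\). Continuity of \(\nu_P\) follows from the maximum theorem, so the integral in the exponent is well defined. If one prefers to avoid differentiating \(\sqrt g\) where it vanishes, working with \(g\) itself, as above, sidesteps that issue.
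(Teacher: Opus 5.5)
Your proof is correct. Its core computation is the same as the paper's: differentiate $\xi^\top P(\theta_t\omega)\xi$, bound the symmetric matrix by $2\nu_P$ after the $P^{-1/2}$ congruence, and apply Gr\"onwall.

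The difference is the order in which the linear and nonlinear estimates are obtained, and the tool linking them. The paper proves the incremental estimate first, working directly with $\xi=x_1-x_2$. It writes $\dot\xi=A(t)\xi$ with the averaged Jacobian $A(t)=\int_0^1 D_xF(\theta_t\omega,x_2(t)+r\xi(t))\,\dd r$, and notes that the matrix inequality is affine in the Jacobian, so it survives the averaging. Only afterwards does it repeat the computation for variational vectors to bound $\kappa_t$.

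You reverse this. You first prove the derivative bound, and hence the bound on $\kappa_t$. You then obtain the incremental estimate as a corollary of \eqref{eq:kappa-nonlinear-estimate}, that is, via the mean-value formula for the flow map along the initial segment $[y,x]\subset K$.

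Your ordering reuses Lemma~\ref{lem:subadditive-growth} and avoids the affinity-in-the-Jacobian argument. It also makes the ``consequently'' run from the cocycle bound to the nonlinear one. The paper's route handles the nonlinear difference directly: its first estimate needs only $D_xF$, not differentiability of $\varphi$ in the initial state. It also applies convexity to the segment between the current states $x_1(t),x_2(t)$ rather than to the initial segment.

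Both routes depend essentially on convexity and positive invariance of $K$, and both are complete. Your treatment of the regularity points is adequate: differentiability of $t\mapsto P(\theta_t\omega)$ via the flow property, and continuity of $\nu_P$ via the maximum theorem.
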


\begin{pf}
Let \(x_1(t)=\varphi(t,\omega,x)\), \(x_2(t)=\varphi(t,\omega,y)\), and \(\xi(t)=x_1(t)-x_2(t)\). Since \(K\) is convex and positively invariant,
\begin{equation}
 \dot\xi(t)=A(t)\xi(t),
 \qquad
 A(t)=\int_0^1D_xF\bigl(\theta_t\omega,x_2(t)+r\xi(t)\bigr)\,\dd r.
 \label{eq:mean-value-variational-matrix}
\end{equation}
By the definition of \(\nu_P\), for every \(z\in K\),
\begin{equation}
 D_xF(\theta_t\omega,z)^\top P(\theta_t\omega)
 +P(\theta_t\omega)D_xF(\theta_t\omega,z)
 +\dot P(\theta_t\omega)
 \preceq2\nu_P(\theta_t\omega)P(\theta_t\omega).
 \label{eq:pointwise-lognorm-matrix-bound}
\end{equation}
The left-hand side is affine in the Jacobian, so integration with respect to \(r\) yields the same inequality with \(D_xF\) replaced by \(A(t)\). Therefore, for
\begin{equation}
 W(t):=\xi(t)^\top P(\theta_t\omega)\xi(t),
 \label{eq:incremental-energy}
\end{equation}
one has
\begin{equation}
 \dot W(t)
 =\xi(t)^\top
 \bigl(A(t)^\top P(\theta_t\omega)
 +P(\theta_t\omega)A(t)
 +\dot P(\theta_t\omega)\bigr)\xi(t)
 \leq2\nu_P(\theta_t\omega)W(t).
 \label{eq:incremental-energy-inequality}
\end{equation}
Integration gives
\begin{equation}
 W(t)\leq W(0)
 \exp\!\left(2\int_0^t\nu_P(\theta_s\omega)\,\dd s\right),
 \label{eq:incremental-energy-gronwall}
\end{equation}
and taking square roots proves \eqref{eq:differential-incremental-estimate}. Applying the same computation to a variational vector
\begin{equation}
 \dot v=D_xF(\theta_t\omega,\varphi(t,\omega,x))v
 \label{eq:fiber-variational-equation}
\end{equation}
and then taking the operator norm and the supremum over \(x\in K\) proves \eqref{eq:kappa-lognorm-bound}.
\end{pf}

Define the maximal invariant-measure logarithmic-norm average
\begin{equation}
 \beta_P
 :=\sup_{\mu\in\Mcal_\theta(\Omega)}
 \int_\Omega\nu_P(\omega)\,\dd\mu(\omega).
 \label{eq:maximal-lognorm-average}
\end{equation}

\begin{theorem}
\label{thm:averaged-contraction-main}
Suppose that Assumption~\ref{ass:well-posedness} holds and that there exists a moving metric \(P\) satisfying \eqref{eq:P-uniform-bounds} and \eqref{eq:P-derivative-along-base}. If
\begin{equation}
 \beta_P<0,
 \label{eq:negative-maximal-lognorm-average}
\end{equation}
then all conclusions of Theorem~\ref{thm:subadditive-main} hold.
\end{theorem}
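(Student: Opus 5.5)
The plan is to reduce Theorem~\ref{thm:averaged-contraction-main} to Theorem~\ref{thm:subadditive-main} by showing $\chi_*<0$. Equivalently, and more directly, we produce a common finite window on which the fiber growth $\kappa_\tau$ is uniformly negative. Two earlier results combine here: the pointwise bound \eqref{eq:kappa-lognorm-bound} of Proposition~\ref{prop:differential-contraction}, and the additive semi-uniform Lemma~\ref{lem:semiuniform-negativity} applied to $a=\nu_P$.

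The first step is to check that $\nu_P\in C(\Omega,\R)$, since Lemma~\ref{lem:semiuniform-negativity} requires a continuous function. The matrix $P(\omega)^{-1/2}\mathcal S_P(\omega,x)P(\omega)^{-1/2}$ depends continuously on $(\omega,x)$, because $P$ is continuous and uniformly positive definite (so $P^{-1/2}$ is continuous), $\dot P$ is continuous by hypothesis, and $D_xF$ is continuous on $\Omega\times U$. The largest eigenvalue of a symmetric matrix is continuous, and the supremum over the compact set $K$ is continuous in $\omega$ by the maximum theorem. This step is routine, but it is the one place where the regularity assumptions on $\dot P$ actually enter, so I would state it explicitly.

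The second step applies Lemma~\ref{lem:semiuniform-negativity} with $a=\nu_P$. Since $\beta(\nu_P)=\beta_P<0$, we obtain $\tau>0$ and $\gamma>0$ with $\int_0^\tau\nu_P(\theta_s\omega)\,\dd s\leq-\gamma$ for all $\omega\in\Omega$. Then \eqref{eq:kappa-lognorm-bound} gives $\kappa_\tau(\omega)\leq-\gamma$ uniformly. By Lemma~\ref{lem:subadditive-growth}, or directly from \eqref{eq:differential-incremental-estimate}, this yields block contraction \eqref{eq:block-contraction} with $q=\e^{-\gamma}$.

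To match the hypothesis of Theorem~\ref{thm:subadditive-main} literally, I would also record that $\chi_*<0$. For any $\mu\in\Mcal_\theta(\Omega)$, integrating \eqref{eq:kappa-lognorm-bound} and using invariance gives $b_\mu(t)\leq t\int\nu_P\,\dd\mu\leq t\beta_P$. Hence $\chi(\mu)\leq\beta_P$, and so $\chi_*\leq\beta_P<0$. The Fubini interchange is justified by the joint continuity from Lemma~\ref{lem:joint-continuity-growth} and the continuity of $\nu_P$. All the listed conclusions then follow verbatim from Theorem~\ref{thm:subadditive-main}.

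The only genuine obstacle is ensuring that Proposition~\ref{prop:differential-contraction} legitimately bounds $\kappa_t$, and not just the nonlinear increments. This requires the variational computation along $\dot v=D_xF(\theta_t\omega,\varphi(t,\omega,x))v$ with $\varphi(t,\omega,x)\in K$, which holds by positive invariance, so that the pointwise matrix inequality defining $\nu_P$ applies. Since that proposition is already established, no further work is needed.
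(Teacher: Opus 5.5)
Your proposal is correct and matches the paper's proof: the paper likewise applies Lemma~\ref{lem:semiuniform-negativity} to $a=\nu_P$ and invokes Proposition~\ref{prop:differential-contraction} to obtain finite-window contraction with $q=\e^{-\gamma}$, and it records the alternative route $\chi_*\leq\beta_P<0$ via \eqref{eq:kappa-lognorm-bound}. Your explicit check that $\nu_P$ is continuous, which Lemma~\ref{lem:semiuniform-negativity} requires, is a detail the paper leaves implicit, and you argue it correctly.
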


\begin{pf}
Lemma~\ref{lem:semiuniform-negativity} applied to \(a=\nu_P\) gives \(\tau>0\) and \(\gamma>0\) such that
\begin{equation}
 \int_0^\tau\nu_P(\theta_s\omega)\,\dd s\leq-\gamma
 \label{eq:lognorm-negative-window}
\end{equation}
for every \(\omega\in\Omega\). Proposition~\ref{prop:differential-contraction} yields finite-window contraction with \(q=\e^{-\gamma}\). Alternatively, \eqref{eq:kappa-lognorm-bound} gives \(\chi_*\leq\beta_P<0\), so Theorem~\ref{thm:subadditive-main} applies directly.
\end{pf}

\subsection{Control-affine certificate}
\label{subsec:control-affine-certificate}

For the control-affine system
\begin{equation}
 \dot x=f(\theta_t\omega,x)+B(\theta_t\omega,x)u,
 \label{eq:control-affine-system}
\end{equation}
let \(u=k(\omega,x)\), and set
\begin{equation}
 A_k(\omega,x)
 :=D_x\bigl(f(\omega,x)+B(\omega,x)k(\omega,x)\bigr).
 \label{eq:closed-loop-jacobian}
\end{equation}

\begin{corollary}
\label{cor:control-certificate}
Assume there exist a feedback \(k\), a moving metric \(P\), and a continuous function \(a:\Omega\to\R\) such that
\begin{equation}
 A_k(\omega,x)^\top P(\omega)
 +P(\omega)A_k(\omega,x)
 +\dot P(\omega)
 \preceq2a(\omega)P(\omega)
 \label{eq:feedback-metric-inequality}
\end{equation}
for every \((\omega,x)\in\Omega\times K\), and
\begin{equation}
 \sup_{\mu\in\Mcal_\theta(\Omega)}
 \int_\Omega a(\omega)\,\dd\mu(\omega)<0.
 \label{eq:feedback-average-negativity}
\end{equation}
Then all conclusions of Theorem~\ref{thm:subadditive-main} hold for the closed-loop skew product.
\end{corollary}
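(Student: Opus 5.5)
The plan is to reduce Corollary~\ref{cor:control-certificate} to Theorem~\ref{thm:averaged-contraction-main}. The closed-loop vector field is $F=f+Bk$, so its fiber Jacobian is exactly $D_xF=A_k$ from \eqref{eq:closed-loop-jacobian}. Consequently $\mathcal{S}_P(\omega,x)$ in \eqref{eq:moving-symmetric-part} coincides with the left-hand side of \eqref{eq:feedback-metric-inequality}. Throughout, we work under the standing hypotheses: Assumption~\ref{ass:well-posedness} for the closed loop, and $P$ satisfying \eqref{eq:P-uniform-bounds} and \eqref{eq:P-derivative-along-base}.

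\textbf{Step 1: pointwise bound on $\nu_P$.} Congruence by the positive definite matrix $P(\omega)^{-1/2}$ preserves the Loewner order. Applying it to \eqref{eq:feedback-metric-inequality} gives
\[
P(\omega)^{-1/2}\mathcal{S}_P(\omega,x)P(\omega)^{-1/2}\preceq 2a(\omega)I .
\]
Hence $\lambdaMax$ of the left-hand side is at most $2a(\omega)$ for every $x\in K$. Taking the supremum over $x$ and halving yields $\nu_P(\omega)\leq a(\omega)$ for all $\omega\in\Omega$.

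\textbf{Step 2: transfer the average.} Integrating $\nu_P\leq a$ against any $\mu\in\Mcal_\theta(\Omega)$ and taking the supremum gives $\beta_P\leq\sup_\mu\int a\,\dd\mu<0$, by \eqref{eq:feedback-average-negativity}. Theorem~\ref{thm:averaged-contraction-main} then delivers all conclusions of Theorem~\ref{thm:subadditive-main}.

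\textbf{The main obstacle: measurability and continuity of $\nu_P$.} Lemma~\ref{lem:semiuniform-negativity} requires a continuous function, and $\nu_P$ need not be continuous in general. I would sidestep this by not passing through $\nu_P$ at all.
\begin{enumerate}
\item Apply Lemma~\ref{lem:semiuniform-negativity} directly to the continuous function $a$. This gives $\tau,\gamma>0$ with $\int_0^\tau a(\theta_s\omega)\,\dd s\leq-\gamma$ for every $\omega$.
\item Rerun the energy estimate in the proof of Proposition~\ref{prop:differential-contraction} with $a$ in place of $\nu_P$. Inequality \eqref{eq:pointwise-lognorm-matrix-bound} holds with $a$ by hypothesis \eqref{eq:feedback-metric-inequality}. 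It is affine in the Jacobian, so it passes to the averaged matrix $A(t)$ by integrating in $r$, using convexity and positive invariance of $K$. The result is $\dot W\leq 2a(\theta_t\omega)W$.
\item Grönwall's inequality then gives block contraction \eqref{eq:block-contraction} with $q=\e^{-\gamma}$.
\item The remaining conclusions follow exactly as in the proof of Theorem~\ref{thm:subadditive-main} from Sections~\ref{sec:graph} and~\ref{sec:recurrent}.
\end{enumerate}
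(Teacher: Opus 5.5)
Your Steps~1--2 are exactly the paper's proof. Congruence by $P(\omega)^{-1/2}$ gives $\nu_P\le a$, hence $\beta_P<0$, and then Theorem~\ref{thm:averaged-contraction-main} applies. The ``obstacle'' you raise is not real: $\mathcal{S}_P$ is jointly continuous on $\Omega\times K$ (because $D_xF$, $P$, $P^{-1/2}$ and $\dot P$ are continuous) and $K$ is compact, so $\nu_P\in C(\Omega,\R)$ by the maximum theorem, which is what the paper implicitly uses. Your bypass is nonetheless correct: apply Lemma~\ref{lem:semiuniform-negativity} to $a$, then rerun the energy estimate of Proposition~\ref{prop:differential-contraction} with $a$ in place of $\nu_P$. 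It is simply the proof of Theorem~\ref{thm:averaged-contraction-main} with $a$ substituted.
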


\begin{pf}
Inequality \eqref{eq:feedback-metric-inequality} implies \(\nu_P(\omega)\leq a(\omega)\), so \(\beta_P<0\). Apply Theorem~\ref{thm:averaged-contraction-main}.
\end{pf}

\begin{remark}
\label{rem:state-dependent-metric}
A Riemannian version can be formulated with a state-dependent metric \(P(\omega,x)\), the total derivative of the metric along the closed-loop vector field, and the induced geodesic distance. The compact-convex setting used here keeps the main mechanism transparent and avoids additional completeness and geodesic-convexity assumptions. Sections~\ref{sec:graph} and \ref{sec:recurrent} require only the resulting finite-window metric contraction.
\end{remark}

\subsection{Uniquely ergodic and periodic bases}
\label{subsec:unique-ergodicity}

The invariant-measure criterion simplifies substantially under unique
ergodicity and reduces to an explicit time-average condition for periodic
base orbits. We first state the uniquely ergodic case and then record its
periodic and quasiperiodic specializations.

\begin{corollary}
\label{cor:uniquely-ergodic}
Suppose that \(\theta\) is uniquely ergodic with invariant probability measure \(\mu_*\). Under the differential inequality \eqref{eq:feedback-metric-inequality}, it is sufficient that
\begin{equation}
 \int_\Omega a(\omega)\,\dd\mu_*(\omega)<0.
 \label{eq:unique-ergodic-condition}
\end{equation}
\end{corollary}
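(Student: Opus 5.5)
The plan is to reduce the statement to Corollary~\ref{cor:control-certificate}. Under unique ergodicity, the quantity in \eqref{eq:feedback-average-negativity} is exactly the single integral in \eqref{eq:unique-ergodic-condition}, so only two facts need checking.

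First, unique ergodicity means $\Mcal_\theta(\Omega)=\{\mu_*\}$. This set is nonempty by the Krylov--Bogoliubov argument: occupation measures as in \eqref{eq:occupation-measures} have invariant weak limits. Since $\mu_*$ is the only element, the supremum in \eqref{eq:feedback-average-negativity} collapses:
\[
\sup_{\mu\in\Mcal_\theta(\Omega)}\int_\Omega a\,\dd\mu=\int_\Omega a\,\dd\mu_*<0 .
\]
Hypothesis \eqref{eq:feedback-metric-inequality} is assumed directly, so Corollary~\ref{cor:control-certificate} applies. Its proof gives $\nu_P\le a$, hence $\beta_P<0$, and Theorem~\ref{thm:averaged-contraction-main} then delivers every conclusion of Theorem~\ref{thm:subadditive-main} for the closed loop.

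Second, I would record why the uniform finite window holds without appealing to the supremum formulation. For a uniquely ergodic flow and continuous $a$, the Birkhoff averages $\frac1T\int_0^T a(\theta_s\omega)\,\dd s$ converge to $\int a\,\dd\mu_*$ uniformly in $\omega$. This is the same compactness argument as in Lemma~\ref{lem:semiuniform-negativity}, applied to both $a$ and $-a$. Choosing $T$ large then gives the common window in \eqref{eq:uniform-negative-window} directly, with $\gamma=-\tfrac{T}{2}\int a\,\dd\mu_*$.

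No step is a genuine obstacle. The only point needing care is that $a$ must be continuous, which the corollary's hypotheses already give. Without continuity, the weak-limit argument in Lemma~\ref{lem:semiuniform-negativity} and uniform Birkhoff convergence could fail.
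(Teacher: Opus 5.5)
Your proposal is correct and follows the paper's proof: unique ergodicity gives $\Mcal_\theta(\Omega)=\{\mu_*\}$, so the supremum in \eqref{eq:feedback-average-negativity} reduces to the single integral in \eqref{eq:unique-ergodic-condition}, and Corollary~\ref{cor:control-certificate} applies. Your second step, which obtains the common window from uniform convergence of the continuous-time Birkhoff averages, is valid but not needed, since that corollary already supplies the window.
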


\begin{pf}
Unique ergodicity implies that \(\Mcal_\theta(\Omega)=\{\mu_*\}\), so \eqref{eq:unique-ergodic-condition} is exactly \eqref{eq:feedback-average-negativity}.
\end{pf}

For a periodic base orbit of period \(T_p\), this becomes
\begin{equation}
 \frac{1}{T_p}\int_0^{T_p}a(\theta_t\omega)\,\dd t<0.
 \label{eq:period-average-condition}
\end{equation}
For an irrational translation on \(\T^m\), \(\mu_*\) is Haar measure, so the test is an ordinary torus average.

\section{Robustness of the invariant and recurrent graphs}
\label{sec:robustness}

We compare two controlled cocycles \(\varphi\) and \(\widetilde\varphi\) over
the same base flow. Unless stated otherwise, both are assumed to leave the
same compact fiber set \(K\) positively invariant. The first theorem is
formulated directly for their time-\(\tau\) maps. We then derive a genuinely
\(C^1\) estimate through a modulus of continuity of the Jacobian, followed by
a simpler explicit \(C^{1,1}\) specialization, where \(C^{1,1}\) means that
the fiber Jacobian is Lipschitz continuous with respect to the state variable.

\subsection{Persistence of a common invariant fiber set}
\label{subsec:robust-invariance-K}

The use of a common compact invariant set is a genuine hypothesis and is not implied by $C^1$ closeness alone. The following elementary condition makes this assumption verifiable.

\begin{proposition}
\label{prop:robust-invariance-K}
Let
\begin{equation}
 K=\{x\in\R^n:g(x)\leq0\}
 \label{eq:K-sublevel-description}
\end{equation}
be compact, where $g\in C^1$ on a neighborhood of $K$ and $\nabla g(x)\neq0$ for $x\in\partial K$. Suppose that the nominal field satisfies the uniform inward-pointing margin
\begin{equation}
 \nabla g(x)^\top F(\omega,x)\leq-\sigma<0,
 \qquad (\omega,x)\in\Omega\times\partial K.
 \label{eq:nominal-inward-margin}
\end{equation}
If
\begin{equation}
 \sup_{\Omega\times\partial K}
 \lVert \widetilde F-F\rVert
 <\frac{\sigma}{\sup_{x\in\partial K}\lVert\nabla g(x)\rVert},
 \label{eq:inward-perturbation-bound}
\end{equation}
then $K$ is positively invariant for the perturbed system as well.
\end{proposition}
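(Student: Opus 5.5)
The plan is to show that no perturbed trajectory starting in $K$ can reach the exterior. We exploit a strict inward margin of the perturbed field on $\partial K$ and run a first-exit-time argument with $g$ as a barrier function.

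First, set $G:=\sup_{x\in\partial K}\lVert\nabla g(x)\rVert$, which is finite and positive because $\partial K$ is compact and $\nabla g\neq0$ there. Also set $\delta:=\sup_{\Omega\times\partial K}\lVert\widetilde F-F\rVert$, so that \eqref{eq:inward-perturbation-bound} reads $\delta G<\sigma$. For $(\omega,x)\in\Omega\times\partial K$, Cauchy--Schwarz and \eqref{eq:nominal-inward-margin} give
\[
\nabla g(x)^\top\widetilde F(\omega,x)\leq-\sigma+G\delta=:-\widetilde\sigma<0 .
\]
Thus the perturbed field has a uniform inward margin on $\partial K$.

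Second, I would run the barrier argument. Fix $\omega$ and $x_0\in K$, and let $x(t)$ be the local perturbed solution. It exists on some interval in the neighborhood $U$ by the standing regularity, which I assume is inherited by $\widetilde F$. Suppose $x(t_1)\notin K$ for some $t_1$, and let $s:=\sup\{t\in[0,t_1]:x(t)\in K\}$. By continuity and closedness of $K$, $g(x(s))=0$, so $x(s)\in\partial K$, and $g(x(t))>0$ for $t\in(s,t_1]$. The function $u(t):=g(x(t))$ is $C^1$ with
\[
\dot u(s)=\nabla g(x(s))^\top\widetilde F(\theta_s\omega,x(s))\leq-\widetilde\sigma<0 .
\]
Hence $u(t)<u(s)=0$ for $t$ slightly larger than $s$, which contradicts $u>0$ on $(s,t_1]$. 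So the solution never leaves $K$ while it exists.

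Third, since the solution stays in the compact set $K$ and $\widetilde F$ is continuous, hence bounded on $\Omega\times K$, it cannot blow up, and it therefore extends to all $t\geq0$. Uniqueness and continuity of the solution map follow from the local Lipschitz property of $\widetilde F$ in $x$. This yields positive invariance of $K$, and it also establishes the other parts of Assumption~\ref{ass:well-posedness} for the perturbed system.

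The only delicate point is the exit-time step, in particular making sure that $s$ is a genuine boundary point with $g(x(s))=0$ and not just a limit point. This follows from $K=\{g\leq0\}$ together with continuity of $g\circ x$, so I do not expect a real obstacle there.
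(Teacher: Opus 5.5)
Your proposal is correct and follows the same route as the paper. The Cauchy--Schwarz estimate gives the perturbed field a strict inward margin $-\sigma+G\delta<0$ on $\Omega\times\partial K$, and a first-exit argument with $g$ as barrier then rules out leaving $K$. The paper only calls that exit step standard, so your explicit supremum-time argument and the remark on global existence are elaborations of the same proof; note that the contradiction needs $g(x(t))>0$ only for $t$ just after $s$, where $x(t)$ still lies in the domain of $g$.
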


\begin{pf}
For $(\omega,x)\in\Omega\times\partial K$,
\begin{equation}
 \nabla g(x)^\top\widetilde F(\omega,x)
 \leq\nabla g(x)^\top F(\omega,x)
 +\lVert\nabla g(x)\rVert\lVert\widetilde F(\omega,x)-F(\omega,x)\rVert<0.
 \label{eq:perturbed-inward-inequality}
\end{equation}
A standard first-exit argument then excludes crossing of $\partial K$ from the interior to the exterior.
\end{pf}

\subsection{Persistence under perturbations of the time-window map}
\label{subsec:graph-perturbation}

Assume that \(\varphi\) contracts over \(\tau\) with factor \(q<1\). Define
\begin{equation}
 \eta_\tau
 :=\sup_{\omega\in\Omega,\,x\in K}
 \left\lvert
 \varphi(\tau,\omega,x)-\widetilde\varphi(\tau,\omega,x)
 \right\rvert_{\theta_\tau\omega}
 \label{eq:one-window-mismatch}
\end{equation}
and
\begin{equation}
 \delta_\tau
 :=\sup_{\substack{\omega\in\Omega\\x,y\in K,\ x\neq y}}
 \frac{\left\lvert
 [\widetilde\varphi(\tau,\omega,x)-\varphi(\tau,\omega,x)]
 -[\widetilde\varphi(\tau,\omega,y)-\varphi(\tau,\omega,y)]
 \right\rvert_{\theta_\tau\omega}}
 {\lvert x-y\rvert_\omega}.
 \label{eq:lipschitz-window-mismatch}
\end{equation}

\begin{theorem}
\label{thm:graph-robustness}
If
\begin{equation}
 \delta_\tau<1-q,
 \label{eq:robust-contraction-gap}
\end{equation}
then \(\widetilde\varphi\) contracts over \(\tau\) with factor
\begin{equation}
 \widetilde q:=q+\delta_\tau<1.
 \label{eq:perturbed-contraction-factor}
\end{equation}
Consequently, it possesses a unique continuous invariant graph \(\widetilde h\). If \(h\) is the nominal invariant graph, then
\begin{equation}
 \sup_{\omega\in\Omega}
 \lvert h(\omega)-\widetilde h(\omega)\rvert_\omega
 \leq\frac{\eta_\tau}{1-q}.
 \label{eq:graph-robustness-bound}
\end{equation}
\end{theorem}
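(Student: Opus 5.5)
The plan has three steps: get the contraction of \(\widetilde\varphi\) from the triangle inequality, reuse Theorem~\ref{thm:invariant-graph} for the invariant graph, and then compare the two graph transforms through the fixed-point perturbation estimate of the Banach contraction principle.

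\emph{Contraction factor.} For \(\omega\in\Omega\) and \(x,y\in K\), we split
\[
\widetilde\varphi(\tau,\omega,x)-\widetilde\varphi(\tau,\omega,y)
=\bigl[\varphi(\tau,\omega,x)-\varphi(\tau,\omega,y)\bigr]
+\bigl[(\widetilde\varphi-\varphi)(\tau,\omega,x)-(\widetilde\varphi-\varphi)(\tau,\omega,y)\bigr].
\]
We take the norm \(\lvert\cdot\rvert_{\theta_\tau\omega}\). Definition~\ref{def:block-contraction} bounds the first bracket by \(q\lvert x-y\rvert_\omega\). The definition \eqref{eq:lipschitz-window-mismatch} bounds the second by \(\delta_\tau\lvert x-y\rvert_\omega\). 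Hence \(\widetilde\varphi\) is uniformly contracting over \(\tau\) with factor \(\widetilde q=q+\delta_\tau<1\).

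\emph{Invariant graph.} The perturbed cocycle is assumed to satisfy Assumption~\ref{ass:well-posedness} on the same \(K\). Theorem~\ref{thm:invariant-graph} therefore applies verbatim and gives a unique continuous invariant graph \(\widetilde h\), characterized as the unique fixed point in \(C(\Omega,K)\) of the perturbed graph transform
\[
(\widetilde\Gcal g)(\omega)=\widetilde\varphi\bigl(\tau,\theta_{-\tau}\omega,g(\theta_{-\tau}\omega)\bigr).
\]

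\emph{Distance estimate.} We use \(h=\Gcal h\), \(\widetilde h=\widetilde\Gcal\widetilde h\), and Lemma~\ref{lem:graph-transform-contraction} for the nominal transform:
\[
d_\infty(h,\widetilde h)\le d_\infty(\Gcal h,\Gcal\widetilde h)+d_\infty(\Gcal\widetilde h,\widetilde\Gcal\widetilde h)\le q\,d_\infty(h,\widetilde h)+d_\infty(\Gcal\widetilde h,\widetilde\Gcal\widetilde h).
\]
At each \(\omega\), the last term is
\[
\bigl\lvert\varphi(\tau,\omega',\widetilde h(\omega'))-\widetilde\varphi(\tau,\omega',\widetilde h(\omega'))\bigr\rvert_{\omega},
\qquad \omega'=\theta_{-\tau}\omega .
\]
Since \(\theta_\tau\omega'=\omega\) and \(\widetilde h(\omega')\in K\), this is at most \(\eta_\tau\) by \eqref{eq:one-window-mismatch}. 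Because \(d_\infty(h,\widetilde h)\) is finite (\(K\) is compact), we can rearrange to obtain \((1-q)\,d_\infty(h,\widetilde h)\le\eta_\tau\), which is \eqref{eq:graph-robustness-bound}.

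The only point needing care is this last step. The contraction used must be the nominal one, with factor \(q\), which is why the denominator is \(1-q\) rather than \(1-\widetilde q\). The norm indices must also be matched correctly: the mismatch is measured at \(\theta_\tau\omega'=\omega\), which is exactly the base point of the sup-metric \(d_\infty\) at \(\omega\). Everything else is routine.
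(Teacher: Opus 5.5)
Your proposal is correct and follows the paper's proof essentially step for step. It uses the same triangle-inequality split of the perturbed time-$\tau$ map into the nominal increment plus the error-map increment to get the factor $q+\delta_\tau$, and the same fixed-point comparison $d_\infty(h,\widetilde h)\le q\,d_\infty(h,\widetilde h)+\eta_\tau$ based on the nominal contraction. You also check the norm index $\theta_\tau\omega'=\omega$ and the finiteness of $d_\infty(h,\widetilde h)$ before rearranging, which the paper leaves implicit.
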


\begin{pf}
Write
\begin{equation}
 E_\omega(x)
 :=\widetilde\varphi(\tau,\omega,x)-\varphi(\tau,\omega,x).
 \label{eq:window-error-map}
\end{equation}
Then, for \(x,y\in K\),
\begin{equation}
\begin{aligned}
 &\left\lvert
 \widetilde\varphi(\tau,\omega,x)
 -\widetilde\varphi(\tau,\omega,y)
 \right\rvert_{\theta_\tau\omega}
 \\[-1pt]
 &\qquad\leq
 \left\lvert
 \varphi(\tau,\omega,x)-\varphi(\tau,\omega,y)
 \right\rvert_{\theta_\tau\omega}
 \\[-1pt]
 &\qquad\quad+\lvert E_\omega(x)-E_\omega(y)\rvert_{\theta_\tau\omega}
 \\[-1pt]
 &\qquad\leq(q+\delta_\tau)\lvert x-y\rvert_\omega.
\end{aligned}
 \label{eq:perturbed-window-contraction}
\end{equation}
Thus the perturbed graph transform is a contraction with factor \(\widetilde q\).

Let \(\Gcal\) and \(\widetilde\Gcal\) be the nominal and perturbed graph transforms. Since \(h=\Gcal h\) and \(\widetilde h=\widetilde\Gcal\widetilde h\),
\begin{equation}
 d_\infty(h,\widetilde h)
 \leq d_\infty(\Gcal h,\Gcal\widetilde h)
 +d_\infty(\Gcal\widetilde h,
 \widetilde\Gcal\widetilde h)
 \leq qd_\infty(h,\widetilde h)+\eta_\tau.
 \label{eq:graph-robustness-proof}
\end{equation}
Rearranging proves \eqref{eq:graph-robustness-bound}. Notice that the denominator contains the nominal gap \(1-q\); the existence of the perturbed graph is ensured separately by \(q+\delta_\tau<1\).
\end{pf}

\subsection{Finite-window estimates from vector-field perturbations}
\label{subsec:vector-field-perturbations}

Let $F$ and $\widetilde F$ be the nominal and perturbed closed-loop vector fields. Assume that both are defined on a common neighborhood of $K$, their solutions remain in $K$ over $[0,\tau]$, and
\begin{equation}
 L:=\max\left\{
 \sup_{\Omega\times K}\lVert D_xF\rVert,
 \sup_{\Omega\times K}\lVert D_x\widetilde F\rVert
 \right\}<\infty.
 \label{eq:robust-L}
\end{equation}
Set
\begin{equation}
 \varepsilon_0
 :=\sup_{\Omega\times K}\lVert F-\widetilde F\rVert,
 \qquad
 \varepsilon_1
 :=\sup_{\Omega\times K}\lVert D_xF-D_x\widetilde F\rVert.
 \label{eq:C1-vector-field-errors}
\end{equation}
Define
\begin{equation}
 c_0(t):=
 \begin{cases}
 \dfrac{\e^{Lt}-1}{L},&L>0,\\[2mm]
 t,&L=0,
 \end{cases}
 \label{eq:c0-definition}
\end{equation}
and the modulus of continuity of the nominal Jacobian on the compact set $\Omega\times K$ by
\begin{equation}
 \omega_{DF}(r)
 :=\sup\left\{
 \lVert D_xF(\omega,x)-D_xF(\omega,y)\rVert:
 \omega\in\Omega,\ x,y\in K,\lVert x-y\rVert\leq r
 \right\}.
 \label{eq:DF-modulus-continuity}
\end{equation}
Then $\omega_{DF}(r)\to0$ as $r\downarrow0$.

\begin{proposition}
\label{prop:C1-to-window-bounds}
Under \eqref{eq:robust-L} and \eqref{eq:C1-vector-field-errors},
\begin{equation}
 \eta_\tau
 \leq\sqrt{M_P}\,c_0(\tau)\varepsilon_0,
 \label{eq:eta-vector-field-bound}
\end{equation}
and
\begin{equation}
 \delta_\tau
 \leq
 \sqrt{\frac{M_P}{m_P}}\,
 \e^{L\tau}
 \int_0^\tau
 \left[
 \varepsilon_1+
 \omega_{DF}\bigl(\varepsilon_0c_0(s)\bigr)
 \right]\,\dd s.
 \label{eq:delta-vector-field-bound-general-C1}
\end{equation}
In particular, the condition
\begin{equation}
 \sqrt{\frac{M_P}{m_P}}\,
 \e^{L\tau}
 \int_0^\tau
 \left[
 \varepsilon_1+
 \omega_{DF}\bigl(\varepsilon_0c_0(s)\bigr)
 \right]\,\dd s
 <1-q
 \label{eq:C1-robustness-condition-general}
\end{equation}
ensures persistence of finite-window contraction, the invariant graph, and the ambient chain-relation classification.
\end{proposition}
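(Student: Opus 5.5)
We prove the two estimates by Gronwall-type comparisons of the nominal and perturbed solutions over one window $[0,\tau]$, and then get persistence from Theorem~\ref{thm:graph-robustness}.

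\emph{Bound on $\eta_\tau$.} Fix $(\omega,x)\in\Omega\times K$ and set $x(t)=\varphi(t,\omega,x)$, $\widetilde x(t)=\widetilde\varphi(t,\omega,x)$ and $e(t)=\widetilde x(t)-x(t)$. Both trajectories stay in the convex set $K$, so the mean-value formula and \eqref{eq:robust-L} give
\[
\lVert \dot e(t)\rVert\leq \lVert \widetilde F(\theta_t\omega,\widetilde x)-F(\theta_t\omega,\widetilde x)\rVert+\lVert F(\theta_t\omega,\widetilde x)-F(\theta_t\omega,x)\rVert\leq\varepsilon_0+L\lVert e(t)\rVert .
\]
Since $e(0)=0$, Gronwall's inequality yields $\lVert e(t)\rVert\leq\varepsilon_0c_0(t)$. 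Passing to the moving norm costs the factor $\sqrt{M_P}$, which proves \eqref{eq:eta-vector-field-bound}. The intermediate estimate $\lVert e(s)\rVert\leq\varepsilon_0 c_0(s)$ is kept for the next step.

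\emph{Bound on $\delta_\tau$.} Write $E_\omega(x)-E_\omega(y)$ as $\int_0^1[\widetilde X(\tau;z_r)-X(\tau;z_r)](x-y)\,\dd r$, where $z_r=y+r(x-y)\in K$. Here $X(t;z)=D_x\varphi(t,\omega,z)$ and $\widetilde X(t;z)=D_x\widetilde\varphi(t,\omega,z)$ are the fundamental matrices of the variational equations \eqref{eq:variational-equation-joint} along $x(t)=\varphi(t,\omega,z)$ and $\widetilde x(t)=\widetilde\varphi(t,\omega,z)$. It therefore suffices to bound $\lVert\widetilde X(\tau)-X(\tau)\rVert$ uniformly. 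The difference $Y=\widetilde X-X$ solves
\[
\dot Y=D_x\widetilde F(\theta_t\omega,\widetilde x)\,Y+\bigl[D_x\widetilde F(\theta_t\omega,\widetilde x)-D_xF(\theta_t\omega,x)\bigr]X,\qquad Y(0)=0.
\]
Split the forcing bracket as
\[
\bigl[D_x\widetilde F(\theta_t\omega,\widetilde x)-D_xF(\theta_t\omega,\widetilde x)\bigr]+\bigl[D_xF(\theta_t\omega,\widetilde x)-D_xF(\theta_t\omega,x)\bigr].
\]
The first term is at most $\varepsilon_1$. By the previous step, the second is at most $\omega_{DF}(\varepsilon_0c_0(s))$. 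Variation of constants gives
\[
Y(\tau)=\int_0^\tau\widetilde U(\tau,s)\,G(s)\,X(s)\,\dd s,
\]
where $\widetilde U$ is the propagator of the perturbed variational equation and $G$ is the bracket. Using $\lVert\widetilde U(\tau,s)\rVert\leq\e^{L(\tau-s)}$ and $\lVert X(s)\rVert\leq\e^{Ls}$, the product of the two exponentials is $\e^{L\tau}$, so
\[
\lVert Y(\tau)\rVert\leq\e^{L\tau}\int_0^\tau\bigl[\varepsilon_1+\omega_{DF}(\varepsilon_0c_0(s))\bigr]\,\dd s.
\]
Converting from Euclidean norms to the moving norms $\lvert\cdot\rvert_\omega\to\lvert\cdot\rvert_{\theta_\tau\omega}$ costs $\sqrt{M_P/m_P}$, which gives \eqref{eq:delta-vector-field-bound-general-C1}.

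\emph{Main difficulty.} The only delicate point is arranging the splitting so that the exponential factors combine to exactly $\e^{L\tau}$ and not, say, $\e^{2L\tau}$. Propagating $Y$ with the perturbed Jacobian, as above, while bounding $X$ by the nominal growth achieves this. One should also check that $\omega_{DF}$ is evaluated only at points of $K$, which holds because both trajectories remain in $K$.

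\emph{Persistence.} Condition \eqref{eq:C1-robustness-condition-general} implies $\delta_\tau<1-q$. Theorem~\ref{thm:graph-robustness} then gives finite-window contraction of $\widetilde\varphi$ with factor $q+\delta_\tau$, hence a unique invariant graph $\widetilde h$ by Theorem~\ref{thm:invariant-graph}. Corollary~\ref{cor:controlled-chain-reduction} applied to the perturbed system yields the ambient chain-relation classification.
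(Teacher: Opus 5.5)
Your proposal is correct and follows essentially the same route as the paper's proof. Both use Gr\"onwall for the state error $\lVert\widetilde x(t)-x(t)\rVert\leq\varepsilon_0c_0(t)$, add and subtract $D_xF(\theta_t\omega,\widetilde x(t))$ in the variational coefficients, apply variation of constants, and finish with the mean-value formula and conversion to the moving norms. Your explicit factorization through the perturbed propagator, $\lVert\widetilde U(\tau,s)\rVert\,\lVert X(s)\rVert\leq\e^{L(\tau-s)}\e^{Ls}$, justifies the $\e^{L\tau}$ factor, which the paper only asserts.
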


\begin{pf}
Let $x(t)$ and $\widetilde x(t)$ solve the two systems from the same initial state. Gr\"onwall's inequality gives
\begin{equation}
 \lVert\widetilde x(t)-x(t)\rVert
 \leq\varepsilon_0c_0(t),
 \qquad 0\leq t\leq\tau.
 \label{eq:state-flow-perturbation-bound}
\end{equation}
Multiplication by $\sqrt{M_P}$ at the terminal base point proves \eqref{eq:eta-vector-field-bound}.

Let
\begin{equation}
 X(t):=D_x\varphi(t,\omega,x_0),
 \qquad
 \widetilde X(t):=D_x\widetilde\varphi(t,\omega,x_0).
 \label{eq:two-variational-flows}
\end{equation}
They satisfy variational equations with coefficient matrices
\begin{equation}
 A(t)=D_xF(\theta_t\omega,x(t)),
 \qquad
 \widetilde A(t)
 =D_x\widetilde F(\theta_t\omega,\widetilde x(t)).
 \label{eq:two-variational-matrices}
\end{equation}
By adding and subtracting $D_xF(\theta_t\omega,\widetilde x(t))$ and using \eqref{eq:state-flow-perturbation-bound},
\begin{equation}
\begin{aligned}
 \lVert\widetilde A(t)-A(t)\rVert
 &\leq
 \lVert D_x\widetilde F(\theta_t\omega,\widetilde x(t))
       -D_xF(\theta_t\omega,\widetilde x(t))\rVert
 \\
 &\quad+
 \lVert D_xF(\theta_t\omega,\widetilde x(t))
       -D_xF(\theta_t\omega,x(t))\rVert
 \\
 &\leq
 \varepsilon_1+
 \omega_{DF}\bigl(\varepsilon_0c_0(t)\bigr).
\end{aligned}
 \label{eq:coefficient-matrix-perturbation-general}
\end{equation}
Moreover, $\lVert X(t)\rVert,\lVert\widetilde X(t)\rVert\leq\e^{Lt}$. Variation of constants for $\widetilde X-X$ gives
\begin{equation}
 \lVert\widetilde X(t)-X(t)\rVert
 \leq\e^{Lt}
 \int_0^t
 \left[
 \varepsilon_1+
 \omega_{DF}\bigl(\varepsilon_0c_0(s)\bigr)
 \right]\,\dd s.
 \label{eq:variational-flow-perturbation-general}
\end{equation}
The mean-value formula for the error map
\begin{equation}
 x\longmapsto
 \widetilde\varphi(\tau,\omega,x)-\varphi(\tau,\omega,x)
 \label{eq:error-map-mean-value}
\end{equation}
therefore bounds its Euclidean Lipschitz constant by the right-hand side of \eqref{eq:variational-flow-perturbation-general} at $t=\tau$. Converting the input and output norms using \eqref{eq:P-uniform-bounds} proves \eqref{eq:delta-vector-field-bound-general-C1}.
\end{pf}

\begin{corollary}
\label{cor:C11-window-bounds}
Assume additionally that
\begin{equation}
 H:=\Lip_x(D_xF)<\infty.
 \label{eq:robust-H}
\end{equation}
Define
\begin{equation}
 \psi_L(t):=
 \begin{cases}
 \dfrac{\e^{Lt}-1-Lt}{L^2},&L>0,\\[2mm]
 \dfrac{t^2}{2},&L=0.
 \end{cases}
 \label{eq:psiL-definition}
\end{equation}
Then
\begin{equation}
 \delta_\tau
 \leq
 \sqrt{\frac{M_P}{m_P}}\,
 \e^{L\tau}
 \left[
 \tau\varepsilon_1
 +H\varepsilon_0\psi_L(\tau)
 \right].
 \label{eq:delta-vector-field-bound}
\end{equation}
Thus the explicit sufficient condition
\begin{equation}
 \sqrt{\frac{M_P}{m_P}}\,
 \e^{L\tau}
 \left[
 \tau\varepsilon_1
 +H\varepsilon_0\psi_L(\tau)
 \right]
 <1-q
 \label{eq:C1-robustness-condition}
\end{equation}
ensures persistence.
\end{corollary}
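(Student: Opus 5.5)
The plan is to specialize the general $C^1$ bound \eqref{eq:delta-vector-field-bound-general-C1} of Proposition~\ref{prop:C1-to-window-bounds}. The only ingredient that changes is the modulus of continuity of the nominal Jacobian. Under \eqref{eq:robust-H} we have, for all $\omega\in\Omega$ and $x,y\in K$,
\[
\lVert D_xF(\omega,x)-D_xF(\omega,y)\rVert\leq H\lVert x-y\rVert,
\]
so $\omega_{DF}(r)\leq Hr$ for every $r\geq0$. Inserting this into the integrand of \eqref{eq:delta-vector-field-bound-general-C1} gives
\[
\delta_\tau\leq\sqrt{\frac{M_P}{m_P}}\,\e^{L\tau}\int_0^\tau\bigl[\varepsilon_1+H\varepsilon_0c_0(s)\bigr]\,\dd s .
\]
The estimate is monotone in $\omega_{DF}$, so replacing the modulus by a pointwise upper bound is legitimate.

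It remains to compute the integral. The first term gives $\tau\varepsilon_1$. For the second term with $L>0$,
\[
\int_0^\tau\frac{\e^{Ls}-1}{L}\,\dd s=\frac{\e^{L\tau}-1-L\tau}{L^2}=\psi_L(\tau).
\]
For $L=0$ we have $c_0(s)=s$, so $\int_0^\tau s\,\dd s=\tau^2/2=\psi_L(\tau)$. This matches the definition \eqref{eq:psiL-definition} in both cases and yields \eqref{eq:delta-vector-field-bound}.

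For the persistence statement: if \eqref{eq:C1-robustness-condition} holds, then $\delta_\tau<1-q$. Theorem~\ref{thm:graph-robustness} then gives finite-window contraction of $\widetilde\varphi$ with factor $q+\delta_\tau<1$, together with a unique perturbed invariant graph satisfying \eqref{eq:graph-robustness-bound}. Here $\eta_\tau$ is controlled by \eqref{eq:eta-vector-field-bound}. Corollary~\ref{cor:controlled-chain-reduction} and the results of Section~\ref{sec:recurrent} then apply verbatim to the perturbed system. This uses the standing assumption of this subsection that both flows keep $K$ positively invariant; Proposition~\ref{prop:robust-invariance-K} gives one way to verify it.

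No real obstacle arises. The only point needing care is that $H$ is the Lipschitz constant of the nominal Jacobian on $\Omega\times K$, uniformly in $\omega$, since the argument of $\omega_{DF}$ compares $x(t)$ with $\widetilde x(t)$, both in $K$. The $L=0$ case is handled by the separate branch of both $c_0$ and $\psi_L$.
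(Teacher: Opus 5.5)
Your proposal is correct and follows the paper's proof: you bound $\omega_{DF}(r)\leq Hr$, substitute this into \eqref{eq:delta-vector-field-bound-general-C1}, and evaluate $\int_0^\tau c_0(s)\,\dd s=\psi_L(\tau)$ in both the $L>0$ and $L=0$ branches. Persistence then follows from Theorem~\ref{thm:graph-robustness} exactly as you describe.
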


\begin{pf}
The Lipschitz assumption gives $\omega_{DF}(r)\leq Hr$. Substitution into \eqref{eq:delta-vector-field-bound-general-C1} and the identity
\begin{equation}
 \int_0^\tau c_0(s)\,\dd s=\psi_L(\tau)
 \label{eq:c0-integral-psi}
\end{equation}
prove the claim.
\end{pf}

\begin{remark}
\label{rem:robustness-scope}
The base flow is fixed throughout this section. Hence the result concerns persistence of the fiber contraction and continuity of the graph and chain-recurrent graph over the same base chain structure. It is not a structural-stability theorem for perturbations of the entire skew product. The common invariant set $K$ is either assumed or verified by Proposition~\ref{prop:robust-invariance-K}.
\end{remark}

\subsection{Hausdorff continuity of chain recurrent sets}
\label{subsec:CR-robustness}

Let \(d_H\) denote the Hausdorff distance induced by the fixed product metric \(D\) in \eqref{eq:product-metric}.

\begin{corollary}
\label{cor:CR-robustness}
Under the assumptions of Theorem~\ref{thm:graph-robustness},
\begin{equation}
 d_H\bigl(\CR(\Pi),\CR(\widetilde\Pi)\bigr)
 \leq c_P\frac{\eta_\tau}{1-q},
 \label{eq:CR-Hausdorff-bound}
\end{equation}
where \(c_P:=\max\{1,m_P^{-1/2}\}\), as in Theorem~\ref{thm:exponential-tracking}.
\end{corollary}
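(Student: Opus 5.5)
The bound will follow from the explicit description of both recurrent sets as graph lifts over the same base set. Under the assumptions of Theorem~\ref{thm:graph-robustness}, both $\varphi$ and $\widetilde\varphi$ contract over the window $\tau$, with factors $q$ and $\widetilde q=q+\delta_\tau<1$, over the same base flow $\theta$ and the same fiber set $K$. Theorem~\ref{thm:chain-recurrence-identity} therefore applies to each system separately and gives
\[
\CR(\Pi)=\{(\omega,h(\omega)):\omega\in\CR(\theta)\},
\qquad
\CR(\widetilde\Pi)=\{(\omega,\widetilde h(\omega)):\omega\in\CR(\theta)\}.
\]
The crucial observation is that the base set $\CR(\theta)$ is identical for the two systems. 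This holds because the base flow is not perturbed (Remark~\ref{rem:robustness-scope}).

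Next we pair points over the same base coordinate. Take $z=(\omega,h(\omega))\in\CR(\Pi)$ and set $\widetilde z=(\omega,\widetilde h(\omega))\in\CR(\widetilde\Pi)$. By the product metric \eqref{eq:product-metric} and the norm comparison in Section~\ref{subsec:fiber-metrics},
\[
D(z,\widetilde z)=\lVert h(\omega)-\widetilde h(\omega)\rVert\leq m_P^{-1/2}\lvert h(\omega)-\widetilde h(\omega)\rvert_\omega\leq c_P\,\frac{\eta_\tau}{1-q},
\]
where the last step uses \eqref{eq:graph-robustness-bound}. Hence $\dist(z,\CR(\widetilde\Pi))$ is bounded by the right-hand side of \eqref{eq:CR-Hausdorff-bound}. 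The symmetric argument, starting from a point of $\CR(\widetilde\Pi)$ and pairing it with the nominal graph point over the same base coordinate, bounds the other Hausdorff semidistance by the same quantity. Taking the maximum of the two semidistances gives \eqref{eq:CR-Hausdorff-bound}.

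There is one degenerate case. If $\CR(\theta)$ were empty, both recurrent sets would be empty and the estimate would hold trivially. In fact this cannot occur: on a compact space the nonwandering set is nonempty, and it is contained in $\CR(\theta)$.

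No step here is a real obstacle. The only points to check are that Theorem~\ref{thm:chain-recurrence-identity} genuinely applies to the perturbed cocycle, which is the content of the first part of Theorem~\ref{thm:graph-robustness}, and that the norm conversion produces the constant $c_P$, which dominates $m_P^{-1/2}$ by definition.
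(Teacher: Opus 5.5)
Your proposal is correct and follows the paper's proof. You identify both recurrent sets, via Theorem~\ref{thm:chain-recurrence-identity}, as graph lifts over the common base set \(\CR(\theta)\); pairing points over the same base coordinate and using \eqref{eq:graph-robustness-bound} with \(\lVert v\rVert\leq m_P^{-1/2}\lvert v\rvert_\omega\leq c_P\lvert v\rvert_\omega\) then bounds both Hausdorff semidistances, and your remarks on the perturbed contraction factor \(q+\delta_\tau\) and on \(\CR(\theta)\neq\varnothing\) only spell out what the paper leaves implicit.
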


\begin{pf}
Both cocycles satisfy finite-window contraction, so
\begin{equation}
 \CR(\Pi)=\{(\omega,h(\omega)):\omega\in\CR(\theta)\},
 \qquad
 \CR(\widetilde\Pi)
 =\{(\omega,\widetilde h(\omega)):\omega\in\CR(\theta)\}.
 \label{eq:two-recurrent-graphs}
\end{equation}
Pairing points with the same base coordinate and using \eqref{eq:graph-robustness-bound} proves the estimate.
\end{pf}

\subsection{Persistent disturbances and recurrent tubes}
\label{subsec:disturbance-tube}

We next quantify the effect of persistent perturbations on the transverse
distance from the invariant graph. A standard scalar comparison argument
yields both an asymptotic forward bound and a sharper uniform tube estimate
for complete trajectories and compact strictly invariant sets.

\begin{proposition}
	\label{prop:disturbance-tube}
	Let \(W\geq0\) be continuous along trajectories of a perturbed closed-loop system, and suppose that
	\begin{equation}
		D^+W(t)\leq-\lambda W(t)+\delta,
		\qquad \lambda>0,\quad\delta\geq0.
		\label{eq:ISS-graph-distance}
	\end{equation}
	Then every forward trajectory satisfies
	\begin{equation}
		W(t)
		\leq
		\e^{-\lambda(t-s)}W(s)
		+\frac{\delta}{\lambda}
		\bigl(1-\e^{-\lambda(t-s)}\bigr),
		\qquad t\geq s,
		\label{eq:ISS-graph-bound}
	\end{equation}
	and therefore
	\begin{equation}
		\limsup_{t\to\infty}W(t)
		\leq\frac{\delta}{\lambda}.
		\label{eq:forward-practical-bound}
	\end{equation}
	If a trajectory is complete and \(W\) is bounded along it, that is,
	\[
	\sup_{s\in\R}W(s)<\infty,
	\]
	then
	\begin{equation}
		W(t)\leq\frac{\delta}{\lambda},
		\qquad t\in\R.
		\label{eq:complete-recurrent-tube}
	\end{equation}
	In particular, this hypothesis is satisfied whenever the complete trajectory is contained
	in a compact set on which \(W\) is continuous. Consequently, every compact strictly
	invariant set of a flow is contained in the tube
	\[
	W\leq\frac{\delta}{\lambda}.
	\]
	The same conclusion holds for a compact strictly invariant set of a semiflow whenever
	each of its points lies on a complete trajectory contained in that set; this property holds,
	in particular, under the standard complete-orbit construction for compact strictly
	invariant semiflow sets.
\end{proposition}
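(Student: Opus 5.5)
The plan is a scalar comparison argument for the Dini differential inequality \eqref{eq:ISS-graph-distance}, followed by a limit $s\to-\infty$ for complete trajectories.

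\emph{Forward estimate.} Fix $s$ and set
\[
u(t):=\e^{\lambda(t-s)}\Bigl(W(t)-\frac{\delta}{\lambda}\Bigr),\qquad t\geq s.
\]
Since $W$ is continuous, $u$ is continuous. The Dini derivative transforms under multiplication by the smooth positive factor $\e^{\lambda(t-s)}$ by the product rule:
\[
D^+u(t)=\e^{\lambda(t-s)}\bigl(D^+W(t)+\lambda W(t)-\delta\bigr)\leq0 .
\]
I would then invoke the standard lemma that a continuous function with nonpositive upper right Dini derivative is nonincreasing. If one wants to avoid quoting it, the usual argument is to apply it to $u(t)-\epsilon t$ and suppose a first point where this exceeds its initial value, which gives a contradiction; then let $\epsilon\downarrow0$. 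Monotonicity gives $u(t)\leq u(s)$, i.e. $W(t)-\delta/\lambda\leq \e^{-\lambda(t-s)}(W(s)-\delta/\lambda)$. This rearranges exactly to \eqref{eq:ISS-graph-bound}. Letting $t\to\infty$ with $s$ fixed yields \eqref{eq:forward-practical-bound}. The only genuinely delicate point is the monotonicity lemma for Dini derivatives, and it is classical.

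\emph{Complete trajectories.} Assume $M:=\sup_{s\in\R}W(s)<\infty$ and fix $t\in\R$. For every $s\leq t$, \eqref{eq:ISS-graph-bound} gives
\[
W(t)\leq \e^{-\lambda(t-s)}M+\frac{\delta}{\lambda}.
\]
Letting $s\to-\infty$ gives \eqref{eq:complete-recurrent-tube}. If the complete trajectory lies in a compact set on which $W$ is continuous, then $W$ is bounded there by the extreme value theorem, so the hypothesis holds.

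\emph{Invariant sets.} For a compact strictly invariant set of a flow, every point lies on a complete orbit contained in the set, so the previous step applies pointwise. For a semiflow, I would use the hypothesis that each point lies on a complete trajectory inside the set. Such a trajectory is built by the standard construction: choose backward preimages inside $C$ using $\Pi_t(C)=C$, and glue them together with a diagonal/compactness selection. Boundedness again comes from compactness and continuity of $W$, and the conclusion follows. I expect no real obstacle beyond making the Dini comparison rigorous. I would also remark that the differential inequality is assumed to hold along the relevant trajectories, including the backward portions.
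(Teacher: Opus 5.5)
Your proposal is correct and follows the paper's proof essentially step for step: the scalar comparison lemma gives the forward bound, the uniform bound $\sup_{s\in\mathbb{R}}W(s)<\infty$ combined with $s\to-\infty$ gives the tube estimate, and backward selection inside the compact strictly invariant set supplies the complete trajectories needed for semiflows. The only difference is that you prove the comparison step explicitly, using the integrating factor $\mathrm{e}^{\lambda(t-s)}$ and the monotonicity lemma for Dini derivatives, whereas the paper just cites the comparison lemma.
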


\begin{pf}
	The scalar comparison lemma applied to \eqref{eq:ISS-graph-distance} gives
	\eqref{eq:ISS-graph-bound}, from which \eqref{eq:forward-practical-bound} follows
	immediately.
	
	Now let the trajectory be complete and assume that
	\[
	M_W:=\sup_{r\in\R}W(r)<\infty.
	\]
	Applying \eqref{eq:ISS-graph-bound} on the interval \([s,t]\), with \(s<t\), yields
	\[
	W(t)
	\leq
	\e^{-\lambda(t-s)}W(s)
	+\frac{\delta}{\lambda}
	\bigl(1-\e^{-\lambda(t-s)}\bigr)
	\leq
	M_W\e^{-\lambda(t-s)}
	+\frac{\delta}{\lambda}.
	\]
	Letting \(s\to-\infty\) gives \eqref{eq:complete-recurrent-tube}.
	
	Every point of a compact strictly invariant set of a flow belongs to a complete orbit
	contained in that set. Since \(W\) is continuous, it is bounded along every such orbit,
	and \eqref{eq:complete-recurrent-tube} applies. For a compact strictly invariant semiflow
	set, complete prehistories can be obtained by repeated backward selection followed by a
	compactness diagonal argument. The resulting complete trajectories remain in the compact
	set, so \(W\) is bounded along them. Applying \eqref{eq:complete-recurrent-tube} proves the
	final assertion.
\end{pf}

\section{Representative classes and examples}
\label{sec:examples}

This section illustrates the preceding theory on standard recurrent bases and on
examples where the averaged contraction mechanism is genuinely stronger than
pointwise contraction. We first record two immediate consequences for classical
periodic and quasiperiodic forcing and then turn to settings with several ergodic
components, transient expansion, and nontrivial chain order.

\subsection{Periodic and minimal recurrent bases}
\label{subsec:standard-recurrent-bases}

Two standard consequences may be stated briefly. If the base is a periodic circle
flow, then the invariant graph consists of a single periodic orbit and
\begin{equation}
	\CR(\Pi)=\Gamma_h.
	\label{eq:periodic-CR-graph}
\end{equation}
Hence no additional equilibrium, subharmonic orbit, or chaotic chain component can
arise inside the contracting fiber region. The entire chain-recurrent dynamics of
the skew product is carried by the periodic graph.

If the base is a minimal quasiperiodic torus flow, then the invariant graph is
minimal and again coincides with the complete chain-recurrent set. Under unique
ergodicity, the averaged moving-metric condition reduces to a single Haar integral,
as stated in Corollary~\ref{cor:uniquely-ergodic}. These are classical manifestations
of invariant-graph theory. The examples below are more informative because they
combine several ergodic components, transient fiber expansion, and a nontrivial
order structure among the chain components.

\subsection{A non-uniquely ergodic base with transient expansion}
\label{subsec:gradient-base}

Consider the gradient-like flow on \(\Omega=\T^1\) generated by
\begin{equation}
 \dot\vartheta=-\sin\vartheta.
 \label{eq:gradient-base}
\end{equation}
Its chain recurrent set consists of the stable equilibrium \(\vartheta=0\) and the unstable equilibrium \(\vartheta=\pi\):
\begin{equation}
 \CR(\theta)=\{0,\pi\}.
 \label{eq:gradient-base-CR}
\end{equation}
By Poincar\'e recurrence, every invariant probability measure is supported on these two equilibria. Hence
\begin{equation}
 \Mcal_\theta(\T^1)
 =\{p\delta_0+(1-p)\delta_\pi:0\leq p\leq1\}.
 \label{eq:gradient-invariant-measures}
\end{equation}

Consider the scalar plant and feedback
\begin{equation}
 \dot x=x-x^3+u+\cos\vartheta,
 \qquad
 u=-2\cos^2\vartheta\,x.
 \label{eq:gradient-controlled-fiber}
\end{equation}
The closed-loop equation is
\begin{equation}
 \dot x=a(\vartheta)x-x^3+\cos\vartheta,
 \qquad
 a(\vartheta):=1-2\cos^2\vartheta.
 \label{eq:gradient-closed-loop-fiber}
\end{equation}
Its fiber derivative satisfies
\begin{equation}
 \frac{\partial F}{\partial x}(\vartheta,x)
 =a(\vartheta)-3x^2\leq a(\vartheta).
 \label{eq:gradient-fiber-derivative}
\end{equation}
Although \(a(\vartheta)>0\) whenever \(\lvert\cos\vartheta\rvert<1/\sqrt2\), one has
\begin{equation}
 \int_{\T^1}a\,\dd\mu=-1
 \qquad
 \text{for every }\mu\in\Mcal_\theta(\T^1),
 \label{eq:gradient-all-measures-negative}
\end{equation}
because \(a(0)=a(\pi)=-1\). Thus, the instantaneous upper bound on fiber growth is positive along portions of every heteroclinic base orbit, while its average with respect to every invariant probability measure is strictly negative. In this example the common contraction window can also be estimated directly. Since
\begin{equation}
 a(\vartheta)=-1+2\sin^2\vartheta,
 \qquad
 \frac{\dd}{\dd t}\cos\vartheta(t)=\sin^2\vartheta(t),
 \label{eq:gradient-exact-integral-identities}
\end{equation}
one has, for every initial phase and every \(t\geq0\),
\begin{equation}
 \int_0^t a(\vartheta(s))\,\dd s
 =-t+2\bigl(\cos\vartheta(t)-\cos\vartheta(0)\bigr)\leq -t+4.
 \label{eq:gradient-explicit-window-bound}
\end{equation}
Therefore every \(\tau>4\) is a contraction window, with the explicit factor
\begin{equation}
 q\leq\e^{4-\tau}<1.
 \label{eq:gradient-explicit-q}
\end{equation}

Choose \(R>0\) such that
\begin{equation}
 R^3>R+1.
 \label{eq:gradient-invariant-interval}
\end{equation}
Then \(K=[-R,R]\) is positively invariant. Corollary~\ref{cor:control-certificate} applies with \(P=1\). Let \(\xi>0\) be the unique solution of
\begin{equation}
 \xi^3+\xi=1.
 \label{eq:xi-equation}
\end{equation}
At the two recurrent base points the lifted equilibria are \((0,\xi)\) and \((\pi,-\xi)\). Hence
\begin{equation}
 \CR(\Pi)=\{(0,\xi),(\pi,-\xi)\}.
 \label{eq:gradient-two-component-CR}
\end{equation}
This example simultaneously shows that unique ergodicity is unnecessary, transient expansion is allowed, and distinct chain components of the base remain distinct after feedback contraction.

The same example illustrates the attractor--repeller asymmetry. For the base attractor $A=\{0\}$, the standard base dual repeller is $A_\theta^*=\{\pi\}$. Hence
\begin{equation}
 (\iota(A))_\Pi^*
 =\{\pi\}\times K,
 \qquad
 \Core_\Pi^{\mathrm{str}}\bigl(\{\pi\}\times K\bigr)
 =\{(\pi,-\xi)\}.
 \label{eq:gradient-full-fiber-repeller}
\end{equation}
Thus every fiber point over $\pi$ belongs to the standard dual repeller, although only the graph equilibrium $(\pi,-\xi)$ belongs to its largest compact strictly invariant core.

\subsection{A two-dimensional controlled fiber with rotation and transient expansion}
\label{subsec:two-dimensional-example}

The preceding gradient-like base can also support a genuinely multidimensional controlled response. Let \(J\in\R^{2\times2}\) be the rotation generator
\begin{equation}
 J=\begin{pmatrix}0&-1\\1&0\end{pmatrix},
 \qquad J^\top=-J,
 \label{eq:rotation-generator}
\end{equation}
and consider the plant
\begin{equation}
 \dot x
 =x-\lVert x\rVert^2x+b(\vartheta)Jx+u+r(\vartheta),
 \qquad x\in\R^2,
 \label{eq:two-dimensional-plant}
\end{equation}
where \(b:\T^1\to\R\) and \(r:\T^1\to\R^2\) are continuous. Apply the feedback
\begin{equation}
 u=-2\cos^2\vartheta\,x.
 \label{eq:two-dimensional-feedback}
\end{equation}
The closed-loop fiber equation is
\begin{equation}
 \dot x
 =a(\vartheta)x+b(\vartheta)Jx
 -\lVert x\rVert^2x+r(\vartheta),
 \qquad
 a(\vartheta)=1-2\cos^2\vartheta.
 \label{eq:two-dimensional-closed-loop}
\end{equation}
Its Jacobian satisfies
\begin{equation}
 D_xF(\vartheta,x)
 =a(\vartheta)I+b(\vartheta)J
 -\lVert x\rVert^2I-2xx^\top.
 \label{eq:two-dimensional-jacobian}
\end{equation}
Since the rotational term is skew-symmetric,
\begin{equation}
 \frac{1}{2}\bigl(D_xF+D_xF^\top\bigr)
 =a(\vartheta)I-\lVert x\rVert^2I-2xx^\top
 \preceq a(\vartheta)I.
 \label{eq:two-dimensional-symmetric-bound}
\end{equation}
Thus \(P=I\) and \(\nu_P(\vartheta)\leq a(\vartheta)\). As in \eqref{eq:gradient-all-measures-negative}, every invariant measure of the gradient-like base satisfies
\begin{equation}
 \int_{\T^1}a(\vartheta)\,\dd\mu(\vartheta)=-1.
 \label{eq:two-dimensional-average-negative}
\end{equation}
Nevertheless, the symmetric part of the Jacobian is allowed to have a positive direction whenever \(a(\vartheta)>0\) and \(x\) is sufficiently small. The exact integral estimate \eqref{eq:gradient-explicit-window-bound} applies unchanged, so every \(\tau>4\) yields the explicit Euclidean contraction factor \(q\leq\e^{4-\tau}\).

Let \(R>0\) satisfy
\begin{equation}
 R^4>R^2+R\lVert r\rVert_\infty.
 \label{eq:two-dimensional-invariant-ball}
\end{equation}
On the boundary of the ball
\(K=\{x:\|x\|\leq R\}\), using \(x^\top Jx=0\), we obtain
\begin{equation}
	x^\top F(\vartheta,x)
	=
	a(\vartheta)R^2-R^4+x^\top r(\vartheta)
	\leq
	R^2-R^4+R\|r\|_\infty
	<0.
	\label{eq:vector-example-inward}
\end{equation}
Hence \(K\) is positively invariant. The averaged moving-metric theorem yields a unique invariant graph \(h:\T^1\to K\) and
\begin{equation}
 \CR(\Pi)=\{(0,p_0),(\pi,p_\pi)\},
 \label{eq:two-dimensional-chain-recurrent-set}
\end{equation}
where \(p_0=h(0)\) and \(p_\pi=h(\pi)\) are the unique solutions in \(K\) of
\begin{equation}
 -p_0+b(0)Jp_0-\lVert p_0\rVert^2p_0+r(0)=0,
 \qquad
 -p_\pi+b(\pi)Jp_\pi-\lVert p_\pi\rVert^2p_\pi+r(\pi)=0.
 \label{eq:recurrent-fiber-equilibria}
\end{equation}
Uniqueness follows from strict contraction of the autonomous fiber equations over the two recurrent base equilibria. The example shows that the chain classification is not restricted to scalar fibers: rotational dynamics, nonlinear radial damping, nonzero recurrent responses, and transient symmetric expansion are all compatible with the theory.

\subsection{Transient expansion over every forcing period}
\label{subsec:transient-expansion-example}

Consider the periodically forced controlled scalar equation
\begin{equation}
	\dot\vartheta=1,
	\qquad
	\dot x=(\alpha+\beta\cos\vartheta)x-x^3+u+\rho(\vartheta),
	\label{eq:scalar-controlled-example}
\end{equation}
where \(\rho\) is continuous and \(2\pi\)-periodic. Under the linear feedback
\begin{equation}
	u=-\kappa x,
	\label{eq:scalar-feedback}
\end{equation}
the fiber derivative satisfies
\begin{equation}
	\frac{\partial F}{\partial x}(\vartheta,x)
	=
	\alpha-\kappa+\beta\cos\vartheta-3x^2
	\leq
	a(\vartheta),
	\qquad
	a(\vartheta):=\alpha-\kappa+\beta\cos\vartheta.
	\label{eq:scalar-jacobian-bound}
\end{equation}
If
\begin{equation}
	\kappa>\alpha,
	\label{eq:scalar-average-condition}
\end{equation}
then the mean upper growth rate over one period is negative:
\begin{equation}
	\frac{1}{2\pi}
	\int_0^{2\pi}a(\vartheta)\,\dd\vartheta
	=
	\alpha-\kappa
	<0.
	\label{eq:scalar-average-negative}
\end{equation}
At the same time, the instantaneous upper growth rate may remain positive
on a nontrivial portion of each period. Indeed, whenever
\begin{equation}
	\lvert\beta\rvert>\kappa-\alpha,
	\label{eq:scalar-transient-expansion}
\end{equation}
the function \(a(\vartheta)\) changes sign and is positive on a nonempty
open subset of \([0,2\pi)\). Thus the averaged contraction condition is
compatible with recurrent intervals of transient fiber expansion.

Choose \(R>0\) sufficiently large that
\begin{equation}
 R^3>
 \bigl(\lvert\alpha-\kappa\rvert+\lvert\beta\rvert\bigr)R
 +\lVert\rho\rVert_\infty.
 \label{eq:scalar-invariant-interval-condition}
\end{equation}
Then \(K=[-R,R]\) is positively invariant. Corollary~\ref{cor:control-certificate} applies with \(P=1\). Therefore, there exists a unique periodic invariant graph \(h\), every trajectory in \(K\) converges exponentially to it, and
\begin{equation}
 \CR(\Pi)=\Gamma_h.
 \label{eq:scalar-example-CR}
\end{equation}
This example illustrates the essential gain over pointwise contraction: the closed loop is allowed to be instantaneously expanding, but the complete recurrent response remains unique.

\subsection{Translation hulls of recurrent forcing}
\label{subsec:hulls}

Let \(p:\R\to\R^m\) be a bounded uniformly continuous forcing, and let
\(\Omega=H(p)\) be the closure of its translates in the compact-open topology.
The Bebutov translation
\begin{equation}
	(\theta_t\omega)(s)=\omega(s+t)
	\label{eq:Bebutov-translation}
\end{equation}
defines a compact base flow whenever the hull is compact. A nonautonomous controlled equation
\begin{equation}
 \dot x=f(p(t),x)+B(p(t),x)u
 \label{eq:single-forcing-equation}
\end{equation}
then embeds into the family over \(H(p)\). If the averaged metric condition holds on the entire hull, then the recurrent responses corresponding to recurrent elements of the hull lie on a single
continuous invariant graph. This formulation includes periodic, quasiperiodic, almost periodic, and more general translation-compact forcing; see \cite{KloedenRasmussen2011,Sell1971,ShenYi1998}.

\section{Discussion and further directions}
\label{sec:discussion}

Several extensions of the present framework are natural. The compact convex fiber
\(K\subset\R^n\) may be replaced by a compact bundle with complete metric fibers,
provided that the equivariant section, the fiber distance, and a common finite-window
contraction estimate depend continuously on the base point. Convexity is used only in
the derivative-based nonlinear estimate. The topological chain-reduction mechanism
itself requires only a factor map, an equivariant section, and uniform asymptotic
tracking. Likewise, a state-dependent Riemannian or Finsler contraction metric may
replace the base-dependent quadratic metric whenever its induced geodesic distance
satisfies a uniform finite-window estimate. A common invariant fiber set may also be
replaced by a compact absorbing family if asymptotic compactness yields a suitable
compact ambient phase space.

The differential criterion developed here uses negativity of the maximal
invariant-measure exponent to obtain a deterministic contraction window that is uniform
over the entire base. If contraction holds only almost surely with respect to selected
invariant measures, the attracting object may be merely measurable and the ambient
topological chain reduction need not survive. This distinction separates the uniform
source--target classification obtained here from measure-dependent synchronization
mechanisms.

From the control viewpoint, feedback synthesis may target either the maximal fiber
exponent \(\chi_*\) or its computable moving-metric upper bound \(\beta_P\). Both
quantities incorporate the invariant-measure structure of the exosystem and allow
transiently expansive regions. The examples demonstrate that a common contraction
window and its contraction factor can sometimes be estimated explicitly even when the
instantaneous logarithmic norm changes sign. This suggests optimization-based feedback
designs aimed at maximizing the averaged contraction margin without requiring
pointwise contraction at every base state.

\section{Conclusion}
\label{sec:conclusion}

We established an abstract chain-reduction theorem for semiflows admitting a factor
map, an equivariant section, and uniform asymptotic tracking. The central result,
Theorem~\ref{thm:abstract-chain-reduction}, gives an exact ambient source--target
classification: for every source point, including points outside the attracting
section, the complete set of chain-attainable targets is the section lift of the
chain-attainable set of its factor coordinate. Corollary~\ref{cor:abstract-recurrent-reduction}
consequently identifies the chain-recurrent core, its components, and their directed
order.
For contracting controlled skew-product semiflows,
Corollary~\ref{cor:controlled-chain-reduction} yields
\[
z\rightsquigarrow_\Pi w
\quad\Longleftrightarrow\quad
w=\iota(\eta)
\ \text{for some }\eta\in\Omega
\text{ such that }
\pi_\Omega(z)\rightsquigarrow_\theta\eta.
\]
Theorems~\ref{thm:chain-recurrence-identity} and
\ref{thm:chain-component-correspondence} then recover the full recurrent structure of
the skew product from the base dynamics. Theorem~\ref{thm:complete-lyapunov-lift}
complements this reduction by constructing an explicit complete Lyapunov function
through a weighted-supremum transverse gauge, with exponential semigroup decay and
exact preservation of recurrent values.

The invariant-set classification reveals a sharp dual asymmetry. By
Theorems~\ref{thm:compact-invariant-correspondence} and
\ref{thm:attractor-correspondence}, compact strictly invariant sets and attractors
collapse to graph lifts, and their basins are full inverse images of the corresponding
base basins. In contrast, Theorem~\ref{thm:attractor-repeller-correspondence} shows that
standard dual repellers retain entire fibers over the base repeller, while only their
largest compact strictly invariant cores collapse to the graph. Thus the source--target
asymmetry of the ambient chain relation and the attractor--repeller asymmetry are two
manifestations of the same contracting-extension geometry.

Finally, Theorems~\ref{thm:subadditive-main} and
\ref{thm:averaged-contraction-main} make the topological reduction verifiable for
controlled nonautonomous differential equations. A negative maximal
invariant-measure fiber exponent yields a common contraction window, while the
moving-metric logarithmic-norm criterion permits transient expansion and non-unique
ergodicity. The perturbation results provide quantitative persistence under
finite-window and vector-field perturbations, including general \(C^1\) estimates and
an explicit \(C^{1,1}\) specialization.

\end{document}